\definecolor{red-}{rgb}{1.0,0.0,0.0}
\definecolor{green-}{rgb}{0.0,0.7,0.0}
\definecolor{brown-}{rgb}{0.9,0.6,0.0}
\newtheorem{defi}{Definition}[section]
\newtheorem{ex}[defi]{Example}
\newtheorem{thm}[defi]{Theorem}
\newtheorem{notat}[defi]{Notation}
\newtheorem{cor}[defi]{Corollary}
\newtheorem{prop}[defi]{Proposition}
\newtheorem{lem}[defi]{Lemma}
\newtheorem{rem}[defi]{Remark}
\begin{document}

\title{Hilbert curve characterizations of some relevant polarized manifolds}

\author{Antonio Lanteri and Andrea Luigi Tironi}

\date{\today}

\address{Dipartimento di Matematica ``F. Enriques'',
Universit\`a degli Studi di Milano, Via C. Saldini, 50,  I-20133 Milano,
Italy}\email{antonio.lanter@unimi.it}
\address{Departamento de Matem\'atica, Universidad de Concepci\'on, Barrio Universitario s/n, Casilla
160-C, Concepci\'on, Chile} \email{atironi@udec.cl}

\subjclass[2010]{Primary: 14C20, 14N30; Secondary: 14M99, 14D06.
Key words and phrases: polarized manifold, Hilbert curve,
adjunction}

\begin{abstract}
Hilbert curves of special varieties
like Fano manifolds of low coindex as well as fibrations having
such a manifold as general fiber, endowed with appropriate polarizations, are investigated. In particular,
all most relevant varieties arising in adjunction theory are
characterized in terms of their Hilbert curves.
\end{abstract}

\maketitle

\section*{Introduction}\label{Intro}

The Hilbert curve $\Gamma=\Gamma_{(X,L)}$ of a polarized manifold $(X,L)$ was introduced in \cite{BLS} and further studied in
\cite{L1}, \cite{L2}. It is the affine plane curve of degree $n=\dim X$ defined by $p(x,y)=0$, where
$p$ is the complexified of the polynomial
expression provided by the Riemann--Roch theorem for the Euler--Poincar\'e
characteristic $\chi(xK_X+yL)$, regarding $x$ and $y$ as complex variables. Clearly $p \in \mathbb Q[x,y]$ is
a numerical polynomial.
As shown in \cite{BLS}, $\Gamma$ encodes interesting properties of the pair $(X,L)$; in particular
it is sensitive to the possibility of fibering $X$ over a variety of smaller dimension via an adjoint bundle to $L$.
This makes polarized varieties arising in adjunction theory \cite{BS} very interesting from the point of view of
their Hilbert curves. In this paper, inspired by the study of Hilbert curves of projective bundles over a smooth curve
made in \cite{L1}, we provide a unifying perspective of the Hilbert curves of these special varieties.

Since Fano manifolds are the building blocks of these varieties,
we first address (Section 2) the study of pairs $(X,L)$, where $X$
is a Fano manifold of dimension $n$ and $L= \frac{r}{\iota_X}
(-K_X)$, $\iota_X$ being the index of $X$ and $r$ any positive integer.
For such a pair $(X,L)$ we
determine explicitly the canonical equation $p(x,y)=0$ of $\Gamma$
(Algorithm \ref{Alg1}). It turns out that, in $\mathbb A^2_{\mathbb C}$,
$\Gamma$ consists of $n$ parallel lines with slope
$\frac{\iota_X}{r}$. While $\iota_X-1$ of these lines are always defined
over $\mathbb{Q}$, the total reducibility of $p(x,y)$ over
$\mathbb{Q}$ is a delicate problem for $\iota_X\leq n-1$. We provide a partial answer
concerning toric Fano manifolds (Proposition \ref{prop2} and Tables \ref{types n=2,3} and \ref{types n=4})
and a complete discussion for del Pezzo manifolds
(Table \ref{the geography of HC}).
Moreover for $r$ and $\iota_X$ coprime and $\iota_X > \frac{n+1}{2}$ we
provide a characterization of pairs $(X,L)$ as above in terms of
their Hilbert curves (Corollary \ref{Fano2}). The above applies to Fano
manifolds of low coindex, including in particular the cases in
which $X$ is a projective space $\mathbb P^n$, a smooth quadric
hypersurface $\mathbb Q^n$, a del Pezzo or a Mukai manifold (Theorem \ref{projective space,quadric,del Pezzo} and Proposition \ref{Mukai}). We
want to emphasize that in general $(X,L)$ is characterized by various properties
of $\Gamma$ and not only by its shape. For instance,
the Hilbert curve of $\big(\mathbb P^3, \mathcal O(2)\big)$
and that of the del Pezzo threefold of degree $7$
consist of three parallel evenly spaced lines with the same slope
(over $\mathbb C$ or $\mathbb R$);
what makes them different is the arithmetic (Remark \ref{analog-conjecture}).

Next we consider Fano fibrations of low coindex. Here we can assume
that $\text{\rm{rk}}\langle K_X,L \rangle = 2$.
When $X$ fibers over a curve and $L_F = \frac{r}{\iota_F}(-K_F)$, $F$ being a general fiber (Section 3),
some ideas used in \cite{L1} to deal with the case of projective bundles are
further developed and lead to a complete characterization of $\mathbb P$-bundles (Theorem \ref{thm2}),
$\mathbb{Q}$-fibrations (Theorem \ref{thm3}) and del Pezzo fibrations (Theorem \ref{expr_with_r=1}) in terms of their Hilbert curves,
assuming that a suitable adjoint bundle is nef.
In particular, this generalizes \cite[Theorem 4.1]{L1} and \cite[Theorem 6]{L2}.
In fact, to get the canonical equation of $\Gamma$ we describe two approaches. The first one relies on a technical lemma (Lemma \ref{P-fibrations}),
which, under certain conditions, ensures that there exists an ample line bundle $\mathcal{L}$ on $X$, deriving from
$L$ and $K_X$, inducing the fundamental divisor on the general fiber: in a sense this allows us to work as if it were $r=1$. We illustrate this approach
for $\mathbb{Q}$-fibrations relying on \cite[Proposition 3]{L2}. A second approach, which is necessary in the general case due to the lack of specific results
for $r=1$ (e.\ g. for del Pezzo fibrations), is based on the additivity
of the Euler--Poincar\'e characteristic $\chi$ for exact sequences.
Essentially, this allows us to relate the
equation of the Hilbert curve of a fibration with that of its general fiber and since
the latter is a Fano manifold of low coindex we can apply the results in Section 2.
This leads to an algorithm (Algorithm 2) to obtain the equation of the Hilbert curve and we
make it explicit for i) $\mathbb{P}$-bundles and ii) del Pezzo fibrations.
As to case i), a conjecture \cite[Conjecture C$(n,r)$]{L1} claims
that a polarized manifold $(X,L)$ of dimension $n \geq 3$, with $\text{\rm{rk}}\langle K_X,L \rangle = 2$
is a $\mathbb P^{n-1}$-bundle over a smooth curve with $L$ inducing
$\mathcal O_{\mathbb P^{n-1}}(r)$ on every fiber, with $\mathrm{gcd}(r,n)=1$, if and only if its
Hilbert curve $\Gamma$ contains the fixed point
of the Serre involution and consists of $n$ lines, $n-1$ of which exactly are parallel each other, with
slope $\frac{n}{r}$ and evenly spaced. For $r=1$, C$(n,r)$ was proved in \cite{L1}.
As a consequence of Theorem \ref{thm2} it turns out that this
conjecture is true also for $r\geq 2$, provided that $rK_X+nL$ is nef.

More generally, in Section 4 we consider Fano fibrations of low coindex over a normal variety
of dimension $\geq 2$.
The technique relying on the additivity of $\chi$ for exact sequences applies also in
this case. Here we relate the equation of the Hilbert curve of a fibration with that of a suitable divisor, which is in turn
a Fano fibration of the same coindex but of smaller dimension and by induction we reduce
to the case of fibrations over a smooth curve, which allows us to apply the results in Section \ref{subsection1}.
This leads to an algorithm (Algorithm 3) to obtain the equation of the Hilbert curve, for instance for a projective bundle
over a smooth surface. To give a concrete example, we make it explicit for scrolls (Example \ref{ex scroll over S}); for a different approach
relying on direct Chern class computations we refer to \cite{L3}.
Moreover, also in this case we succeed to characterize the structure of $(X,L)$ in terms of its
Hilbert curve, under the assumption that $K_X+(n-1)L$ is nef (Theorem \ref{thm scrolls over S}).

Finally, the link to a program in MAGMA for checking the total reducibility of $p(x,y)$
over $\mathbb Q$ for toric Fano manifolds, as well as
the above algorithms produced in the paper, are contained in the
Appendix. Several computations have been done with the help of Maple $15$.

\section{Background material}\label{backgr}

 Varieties considered in this paper are defined over
 the field $\mathbb C$ of complex numbers. We use the standard notation
 and terminology
 from algebraic geometry. A manifold is any smooth projective variety.
 Tensor products of line bundles are denoted
 additively. The pullback of a vector bundle $\mathcal F$ on a manifold $X$
 by an embedding $Y \hookrightarrow X$ is simply denoted by
 $\mathcal F_Y$.
 We denote by $K_X$ the canonical bundle of a manifold $X$.
 The symbol $\equiv$ will stand for numerical equivalence.
 A \textit{polarized manifold}
 is a pair $(X,L)$ consisting of
 a manifold $X$ and an ample
 line bundle $L$ on $X$.

 A \textit{Fano manifold} is a manifold $X$ such that $-K_X$ is an ample line bundle ($X$ is also called
 a {\textit{del Pezzo surface}} if $\dim X=2$). The
 \textit{index} $\iota_X$ of $X$ is defined as the greatest positive integer which divides $-K_X$ in Pic$(X)$,
 the Picard group of $X$, while by the \textit{coindex} of $X$ we simply mean the
nonnegative integer $c_X:= \dim X +1-\iota_X$. Moreover, we say that a polarized
manifold $(X,L)$ of dimension $n$ is a \textit{del Pezzo manifold}
(respectively a \textit{Mukai manifold}) if
$K_X+(n-1)L=\mathcal{O}_X$ (respectively
$K_X+(n-2)L=\mathcal{O}_X$).

Let $(X,L)$ be a polarized manifold of dimension $n$; we say that $(X,L)$ is a \textit{Fano
fibration of coindex $n-m+1-t$} if there exists a surjective
morphism with connected fibers $\varphi:X\to Y$ onto a
normal variety $Y$ of dimension $m<n$ such that
$K_X+tL=\varphi^*H$ for some ample line bundle $H$ on $Y$ and
positive integer $t$. In particular, a \textit{scroll} $(X,L)$ is
a Fano fibration of coindex $0$, a \textit{quadric fibration} is
a Fano fibration of coindex $1$ and so on. Let us note here that
a Fano fibration of dimension $n$ and coindex $0$ over a curve
$C$ (or a surface $S$) is a projective bundle
$\mathbb{P}(\mathcal{V})$ for some ample vector bundle
$\mathcal{V}$ of rank $n$ (or $n-1$) over $C$ (or $S$). We say that a polarized manifold $(X,L)$ is a $\mathbb P$\textit{--bundle}
over a normal variety $Y$ if $X=\mathbb P(\mathcal F)$ for some
vector bundle $\mathcal F$ on $Y$ and $L$ is any ample line bundle on $X$;
we say that $(X,L)$ is a $\mathbb Q$\textit{--fibration} over $Y$ if $X$ is endowed with
a surjective morphism $X \to Y$ whose general fiber is a smooth quadric hypersurface
and $L$ is any ample line bundle on $X$.

 For the notion and the general properties of the \textit{Hilbert curve} associated to a
 polarized manifold we refer to \cite{BLS}. Here we just recall some basic facts.
 Let $(X,L)$ be a polarized manifold of dimension $n$.
 For any line bundle $D$ on $X$ consider the expression of the Euler--Poincar\'e  characteristic $\chi(D)$
 provided by the Riemann--Roch theorem
 \begin{equation}\label{rrh}
 \chi(D) = \frac{1}{n!} D^n - \frac{1}{2(n-1)!}K_XD^{n-1} + \text{\rm{terms of lower degree}} 
 \end{equation}
 (a polynomial of degree $n$ in the Chern class of $D$, whose coefficients are polynomials in the
 Chern classes of $X$ \cite[Theorem 20.3.2]{Hi}). Let
 $p$ (or $p_{(X,L)}$ to avoid possible ambiguity) be
 the complexified polynomial of $\chi(D)$, when we set $D = xK_X + yL$,
 with $x,y$ complex numbers, namely $p(x,y) := \chi (xK_X+yL)$.
 The Hilbert curve (HC for short) of $(X,L)$ is the complex affine plane curve $\Gamma = \Gamma_{(X,L)}$ of
 degree $n$ defined by $p(x,y)=0$ \cite[Section 2]{BLS}. We refer to $p(x,y)=0$
 as the {\it{canonical equation}} of $\Gamma$.
 Clearly, $p_{(X,L)}(x,y)=p_{(X,L')}(x,y)$ if $L \equiv L'$, hence two numerically equivalent
 polarizations on $X$ give rise to the same HC.
 If $\text{rk}\langle K_X, L \rangle = 2$ in $\text{Num}(X)$, and we consider $\text{N}(X):= \text{Num}(X) \otimes_{\mathbb Z} \mathbb C$ as a complex
 affine space, then $\Gamma$ is the section of the Hilbert variety of $X$ (\cite[$\S 2$]{BLS}) with the plane $\mathbb A^2= \mathbb C \langle K_X,L \rangle$,
 generated by the classes of $K_X$ and $L$. On the other hand, if $\text{rk}\langle K_X, L \rangle = 1$ in $\text{Num}(X)$, $\Gamma$
 loses this meaning, the plane of coordinates $(x,y)$ being only formal. We will refer to this situation as the {\it{degenerate case}}.
 Since $p \in \mathbb Q[x,y]$ is a numerical polynomial, $\Gamma$ is defined over $\mathbb Q$, hence
 we can also look at $\Gamma_{\mathbb R} \subset \mathbb A^2_{\mathbb R}$ and
 $\Gamma_{\mathbb Q} \subset \mathbb A^2_{\mathbb Q}$.

 Taking into account that $c:=\frac{1}{2}K_X$ is the fixed point of the Serre involution $D \mapsto K_X-D$ acting
 on $\text{N}(X)$, sometimes it is convenient to represent $\Gamma$ in terms of affine coordinates $(u=x - \frac{1}{2},
 v=y)$ centered at $c$ instead of $(x,y)$.
 In other words, we set $D=\frac{1}{2}K_X + E$, where $E=uK_X+vL$. Then
 $\Gamma$ can be represented with respect to these coordinates by $p(\frac{1}{2}+u,v)=0$.
 An obvious advantage is that, due to Serre duality, $\Gamma$ is invariant under
 the symmetry $(u,v) \mapsto (-u,-v)$.
 Sometimes, to deal with points at infinity, it is convenient to consider
 also the projective Hilbert curve $\overline{\Gamma} \subset \mathbb P^2$, namely the projective closure of $\Gamma$.
 In this case we use $x,y,z$ as homogeneous coordinates on $\mathbb P^2$, $z=0$ representing the line at infinity.
 Given a point $(x,y) \in \mathbb A^2$, we write $(x:y:1)$ to denote the same point when regarded as a point of $\mathbb P^2$.
 Moreover, we denote by $p_0(x,y,z)$ the homogeneous polynomial associated with $p(x,y)$
 (i.\ e., $p(x,y)=p_0(x,y,1)$), which defines the plane projective curve $\overline{\Gamma}$.
 Note that
 \begin{equation}\label{atinfty}
 p_0(x,y,0) = \frac{1}{n!} (xK_X+yL)^n
 \end{equation}
 in view of \eqref{rrh}. This will be used over and over. Another fact of frequent use will be the following.

 \begin{rem}\label{morphism}
 {\em Let $(X,L)$ be a polarized manifold of dimension $n\geq 3$ and suppose that $\sigma K_X+\tau L$ is nef and not big for some positive
 integers $\sigma, \tau$. Then there exists a morphism $\varphi:X\to Y$ onto a normal variety $Y$ with $\dim Y<n$ such that $\sigma K_X+\tau L=\varphi^*D$
 for a nef line bundle $D$ on $Y$. Actually, we can write $\sigma K_X+ \tau L = K_X+M$ where $M$ is an ample line bundle. This
is obvious for $\sigma =1$, while for $\sigma\geq 2$ we have
$$M= (\sigma-1)K_X + \tau L = \frac{\sigma-1}{\sigma} \big(\sigma K_X + \tau L \big) + \frac{\tau}{\sigma}\ L .$$
Thus $M$ is ample being the sum of a nef and an ample $\mathbb Q$-line bundles. Then by the Kawamata--Shokurov base-point free theorem
the linear system $|m\left(\sigma K_X+\tau L)\right)|$ is effective and base-point free for $m >> 0$.
Hence it defines a morphism $\Phi:X\to\mathbb{P}^N$, where the image has dimension $<n$, since $\sigma K_X+\tau L$ is not big.
The morphism $\varphi:X \to Y$ is defined by the Stein-factorization of $\Phi$. }
\end{rem}

 Finally let us discuss here the case $n=2$ as an example.
 So, let $(X,L)$ be a polarized surface; then its Hilbert curve is simply the conic $\Gamma: = \Gamma_{(X,L)} \subset \mathbb A^2_{\mathbb C}$, defined in coordinates $(u,v)$ by
 \begin{equation}\label{surf}
 p\left(\frac{1}{2}+u,v\right)= \frac{1}{2} \left( K_X^2 u^2 + 2K_X \cdot L uv + L^2 v^2 + 2\chi(\mathcal O_X)-\frac{1}{4}K_X^2 \right)= 0\ .
 \end{equation}
 By the Hodge index theorem we know that $K_X^2 L^2  - (K_X \cdot L) ^2 \leq 0$, with equality if and only if
 $K_X \equiv \lambda L$ (degenerate case).
 So $\Gamma$ is of parabolic type if and only if we are in the degenerate case.
 Now look at $\Gamma$ from the real point of view.
 The above expression is the quadratic orthogonal invariant of the conic $\Gamma_{\mathbb R}$, hence $\Gamma_{\mathbb R}$ is either a hyperbola or a couple of incident lines,
 except for the degenerate case, in which $\Gamma_{\mathbb R}$ is necessarily reducible: either a line with multiplicity $2$,
 a couple of parallel lines, or $\Gamma_{\mathbb R}=\emptyset$ (according to whether $K_X^2 - 8\chi(\mathcal O_X)$ is $=0$, $>0$ or $<0$ respectively).
 For instance, the last situation occurs for the cubic surface in $\mathbb P^3$.

 More generally, consider a del Pezzo surface $X$, set $-K_X=\iota_X H$, $d=H^2$, and let $L=rH$ for any positive integer $r$.
 The classification of del Pezzo surfaces implies the following facts for the HC, $\Gamma$, of the pair $(X,L)$.
 If $d \leq 7$, then $\Gamma_{\mathbb R} = \emptyset$. Actually, in this case, $\iota_X=1$ and $p\big(\frac{1}{2}+u,v\big)=\frac{d}{2}(u-rv)^2+ \big(1-\frac{d}{8}\big)$.
 Let $d=8$; then either: a) $X=\mathbb P^1 \times \mathbb P^1$, $\iota_X=2$, and then $L=\mathcal O_{\mathbb P^1 \times \mathbb P^1}(r,r)$, or
 b) $X = \mathbb F_1$ (the Segre--Hirzebruch surface of invariant $1$), $\iota_X=1$, and $L=r[2C_0+3f]$ ($C_0$ and $f$ being the $(-1)$-section ad a fiber
 respectively). Here
 \begin{equation} \notag
 p\left(\frac{1}{2}+u,v\right) = \begin{cases}
 (2u-rv)^2\quad \mathrm{in\ case\ a)} \\
 4(u-rv)^2 \quad \mathrm{in\ case\ b)}\ ,
 \end{cases}
 \end{equation}
 hence in both cases $\Gamma_{\mathbb R}$ is a line with multiplicity $2$.
 Finally, let $d=9$, then $X = \mathbb P^2$, $\iota_X=3$ and $L=\mathcal O_{\mathbb P^2}(r)$. In this case,
 $p\big(\frac{1}{2}+u,v\big) = \frac{1}{2}\big((3u-rv)^2-\frac{1}{4}\big)$
 hence $\Gamma_{\mathbb R}$ consists of two parallel lines.

\section{The case $\mathrm{rk}\langle K_X,L\rangle=1$: High index Fano manifolds}

Let $X$ be a Fano manifold of dimension $n\geq 2$ of index $\iota_X$. Then there exists an ample line bundle $H$ (a fundamental divisor) on $X$ such that $-K_X=\iota_XH$.
From now on our setting for Fano polarized manifolds $(X,L)$ will be the following:
\begin{equation}\label{Fano setting}
X\ \mathrm{is\ Fano\ with\ } n\geq 2\ \mathrm{and\ } L:=rH,\ \mathrm{where\ } H:=\frac{1}{\iota_X}(-K_X)\ \ \mathrm{is\ the\ fundamental\ divisor}.
\end{equation}
Let $(X,L)$ be as in \eqref{Fano setting}. Then $rK_X+\iota_XL=\mathcal{O}_X$, which implies $\mathrm{rk}\langle K_X,L\rangle=1$. Let $p(x,y)=0$ be the canonical equation
of the Hilbert curve $\Gamma_{(X,L)}$.
Recalling that $p(x,y)=\chi(xK_X+yL)$, we get
$$p(x,y)=\chi( (ry-\iota_Xx)H )=\chi( tH )=:q(t),$$
where $t:=ry-\iota_Xx$. Moreover, note that $q(t)=\chi( K_X+(t+\iota_X)H )=h^0(tH)$ for $t\geq 1-\iota_X$, by the Kodaira vanishing theorem.
Thus, if $-1\geq t\geq 1-\iota_X$ then $q(t)=0$. This shows that
$$q(t)=\varphi(t)\cdot\prod_{i=1}^{\iota_X-1} (t+i) \quad \mathrm{with}\ \deg\varphi(t)=c_X\ , \mathrm{the\ coindex\ of\ } X.$$
Set $\varphi(t):=\sum_{j=0}^{c_X}a_jt^j$ and observe that we need $c_X+1$ linearly independent linear conditions on $q(t)$ to determine the polynomial $\varphi(t)$.
So, for $s=0,1,...,c_X$ we see that
$\varphi(s)\cdot\prod_{i=1}^{\iota_X-1} (s+i)=q(s)=h^0(sH)$, i.\ e.,
$$a_{c_X}s^{c_X}+a_{c_X-1}s^{c_X-1}+ \dots +a_{1}s+a_0=\varphi(s)=\frac{h^0(sH)}{\prod_{i=1}^{\iota_X-1} (s+i)}\ .$$
This gives the following system of $c_X+1$ linear equations in the $c_X+1$ unknowns $a_0,a_1,...,a_{c_X}$
\begin{equation}\label{linear system Fano}
U\cdot
\left( \begin{array}{ c }
a_0 \\
a_1 \\
\vdots \\
a_{c_X} \\
\end{array}\right)=
\left( \begin{array}{ c }
\frac{h^0(\mathcal{O}_X)}{\delta(0)} \\
\frac{h^0(H)}{\delta(1)} \\
\vdots \\
\frac{h^0(c_X H)}{\delta(c_X)} \\
\end{array}\right)\ ,
\end{equation}
where $U$ is the $(c_X+1)\times (c_X+1)$ Vandermonde matrix
\begin{equation}\label{U}
U:=\left( \begin{array}{ c c c c c }
1& 0 & \cdots & 0 & 0 \\
1& 1 & \cdots & 1 & 1  \\
1& 2 & \cdots & 2^{c_X-1} & 2^{c_X}  \\
\vdots&\vdots&  &\vdots &\vdots \\
1& c_X & \cdots & (c_X)^{c_X-1} & (c_X)^{c_X}  \\
\end{array}\right)\ ,
\end{equation}

and

\bigskip

$\delta(u):=\prod_{i=1}^{\iota_X-1} (u+i)$ for any $u\in\mathbb{Z}_{\geq 0}$ if $\iota_X\geq 2$, $\delta(u):=1$ for any $u\in\mathbb{Z}_{\geq 0}$ if $\iota_X=1$.

\bigskip

\noindent The above discussion can be summarized as follows.

\begin{prop}\label{prop1}
Let $(X,L)$ be as in \eqref{Fano setting}. Then
\begin{equation}\label{*}
p(x,y)= \left( \sum_{i=0}^{c_X}a_i(ry-\iota_Xx)^i\right)\cdot\prod_{i=1}^{\iota_X-1} (ry-\iota_Xx+i)\ ,
\end{equation}
where $(a_0, a_1, \dots , a_{c_X})$ is the solution of \eqref{linear system Fano}.
\end{prop}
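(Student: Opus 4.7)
The plan is to reduce the bivariate polynomial $p(x,y)$ to a univariate polynomial via the syzygy $rK_X+\iota_X L=0$, use Kodaira vanishing to locate $\iota_X-1$ integer roots of this univariate polynomial, and finally determine the leftover factor of degree $c_X$ by a standard interpolation scheme whose matrix is precisely the Vandermonde matrix $U$ in \eqref{U}.

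First, I would substitute $K_X=-\iota_X H$ and $L=rH$ to get $xK_X+yL\equiv(ry-\iota_X x)H$. Because $\chi$ depends only on the numerical class, this yields
$$p(x,y)=\chi\bigl((ry-\iota_X x)H\bigr)=q(t),\qquad t:=ry-\iota_X x,$$
a polynomial of degree $n$ in the single variable $t$. Writing $tH=K_X+(t+\iota_X)H$, Kodaira vanishing gives $h^i(tH)=0$ for every $i>0$ as long as $t+\iota_X\geq 1$, so $q(t)=h^0(tH)$ on the range $t\geq 1-\iota_X$. For the $\iota_X-1$ integers $t=-1,-2,\dots,-(\iota_X-1)$ the bundle $tH$ is the opposite of an ample bundle, hence $h^0(tH)=0$; this forces the factorization
$$q(t)=\varphi(t)\cdot\prod_{i=1}^{\iota_X-1}(t+i),\qquad \deg\varphi=n-(\iota_X-1)=c_X.$$

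Next, I would pin down $\varphi(t)=\sum_{j=0}^{c_X}a_j t^j$ by imposing $c_X+1$ interpolation conditions. Evaluating at $t=s\in\{0,1,\dots,c_X\}$, the identity $\varphi(s)\cdot\delta(s)=h^0(sH)$ (again via Kodaira vanishing, since $s+\iota_X\geq 1$) gives exactly the linear system \eqref{linear system Fano}. Its coefficient matrix $U$ in \eqref{U} is the Vandermonde matrix on the distinct nodes $0,1,\dots,c_X$, hence invertible, so $(a_0,\dots,a_{c_X})$ is uniquely determined. Substituting $t=ry-\iota_X x$ back into $q(t)=\varphi(t)\prod_{i=1}^{\iota_X-1}(t+i)$ produces the stated formula \eqref{*}.

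The only point demanding minor care is the degenerate case $\iota_X=1$, in which the product over $i$ is empty; under the convention $\delta(u)\equiv 1$ one has $\varphi=q$, and the same interpolation argument at $t=0,1,\dots,c_X=n$ applies verbatim. I do not foresee any genuine obstacle: the heart of the argument is the interplay between Kodaira vanishing and the numerical relation $rK_X+\iota_X L=0$, and the rest is bookkeeping.
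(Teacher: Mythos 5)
Your proposal is correct and follows essentially the same route as the paper: reduce to the univariate polynomial $q(t)=\chi(tH)$ via $t=ry-\iota_Xx$, use Kodaira vanishing to identify the roots $t=-1,\dots,-(\iota_X-1)$, and determine the residual degree-$c_X$ factor by interpolation at $t=0,1,\dots,c_X$, which is exactly the Vandermonde system \eqref{linear system Fano}. No gaps.
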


\medskip

Observe that the Hilbert curve $\Gamma_{(X,L)}$ of a
pair $(X,L)$ as in \eqref{Fano setting} is always totally reducible over
$\mathbb{C}$, i.\ e., $p(x,y)$ is the product of $n$ polynomials of degree one in $\mathbb{C}[x,y]$, because $p(x,y)= \left(
\sum_{i=0}^{c_X}a_iz^i\right)\cdot\prod_{i=1}^{\iota_X-1} (z+i)$, where
$z=ry-\iota_Xx$.

It would be interesting to know for which pairs $(X,L)$ as in \eqref{Fano setting}, $\Gamma_{(X,L)}$ is totally reducible over $\mathbb{R}$ (or $\mathbb{Q}$).
Relying on Proposition \ref{prop1} and running the Magma Program \cite{magma} (see the Appendix), with the same notation as in the database at

\medskip

\url{http://www.grdb.co.uk/forms/toricsmooth?dimension_cmp=eq&dimension=3}\ ,

\medskip

\noindent we provide a partial answer to this question. In fact, we characterize smooth Fano toric manifold of dimension $n\leq 4$ whose $p(x,y)$ is totally reducible over $\mathbb{Q}$.

\bigskip

\begin{prop}\label{prop2}
Let $(X,L)$ be as in \eqref{Fano setting} with $n\leq 4$ and assume that $X$ is toric.
Then
$p=p_{(X,L)}(x,y)$ is totally reducible over $\mathbb{Q}$
if and only if one of the cases in the two tables below occurs, where $z:=ry-\iota_Xx$ and the {\em Nos.} and $Q$ are as in the above database.

\begin{center}
\footnotesize
\begin{longtable}{ccccclcc}
\hline
{\em No.} & $n$ & $\iota_X$ & $p$ & $\left( -K_X \right)^n$ & \qquad $X=X(Q)$ & Vol$(Q)$ & $X$
\\
\hline
$3$ &
$2$ &
$1$ &
$4(z+\frac{1}{2})^2$ &
$8$ &
\begin{tabular}{l}
Vertices:   $(1,0), (0,1),$ \\
 $(-1,1), (0,-1)$ \\
Dual:   $(0,-1), (-1,-1),$ \\
 $(-1,1), (2,1)$
\end{tabular}
& $4$ & $\mathbb{F}_1$ \\
\hline
$4$ &
$2$ &
$2$ &
$-\frac{1}{2}\left(z -2\right)(z+1)$ &
$8$ &
\begin{tabular}{l}
Vertices:   $(1,0), (0,1),$ \\
$(-1,0), (0,-1)$ \\
Dual:       $(1,-1), (-1,-1),$ \\
$(-1,1), (1,1)$
\end{tabular}
& $4$ & $\mathbb{P}^1\times\mathbb{P}^1$ \\
\hline
$5$ &
$2$ &
$3$ &
$\frac{1}{2}(z+1)(z+2)$ &
$9$ &
\begin{tabular}{l}
Vertices:   $(1,0), (0,1),$ \\
$(-1,-1)$ \\
Dual:       $(2,-1), (-1,-1),$ \\
$(-1,2)$
\end{tabular}
& $3$ & $\mathbb{P}^2$ \\
\hline
\hline \smallskip
$6$ &
$3$ &
$1$ &
$\frac{25}{3}(z+\frac{1}{2})(z+\frac{2}{5})(z+\frac{3}{5})$ &
$50$ &
\begin{tabular}{l}
Vertices:   $(1,0,0), (0,1,0),$ \\
 $(0,0,1), (-1,-1,2),$ \\
 $(0,1,-1), (0,0,-1)$ \\
Dual:   $(0,-1,-1), (-1,0,-1),$ \\
$(-1,-1,-1), (2,-1,0),$ \\
$(-1,-1,0), (-1,0,1),$ \\
$(-1,4,1), (3,0,1)$
\end{tabular}
& $8$ &
\begin{tabular}{l}
the blow-up of \\
$\mathbb{P}(\mathcal{O}_{\mathbb{P}^2}\oplus\mathcal{O}_{\mathbb{P}^2}(1))$ \\
along a line \\
\end{tabular} \\
\hline
$12$ &
$3$ &
$1$ &
$\frac{25}{3}(z+\frac{1}{2})(z+\frac{2}{5})(z+\frac{3}{5})$ &
$50$ &
\begin{tabular}{l}
Vertices:   $(1,0,0), (0,1,0),$ \\
$(0,0,1), (-1,0,1),$ \\
$(0,1,-1), (0,-1,0)$ \\
Dual:   $(0,-1,-1), (-1,-1,-1),$ \\
$(-1,-1,0), (1,-1,0),$ \\
$(-1,1,2), (-1,1,-1),$ \\
$(0,1,-1), (3,1,2)$
\end{tabular}
& $8$ & $\mathbb{P}(\mathcal{O}_{\mathbb{F}_1}\oplus\mathcal{O}_{\mathbb{F}_1}(f))$ \\
\hline
$17$ &
$3$ &
$1$ &
$8(z+\frac{1}{2})^3$ &
$48$ &
\begin{tabular}{l}
Vertices:   $(1,0,0), (0,1,0),$ \\
$(0,0,1),(-1,0,1),$ \\
$(0,-1,0), (0,0,-1)$ \\
Dual:   $(0,-1,-1), (-1,-1,-1),$ \\
$(-1,1,-1), (0,1,-1),$ \\
$(-1,-1,1),(-1,1,1),$ \\
$(2,-1,1), (2,1,1)$
\end{tabular}
& $8$ & $\mathbb{F}_1\times \mathbb{P}^1$ \\
\hline
$19$ &
$3$ &
$1$ &
$9(z+\frac{1}{2})(z+\frac{1}{3})(z+\frac{2}{3})$ &
$54$ &
\begin{tabular}{l}
Vertices:   $(1,0,0), (0,1,0),$ \\
$(0,0,1), (-1,0,1),$ \\
$(0,-1,-1)$ \\
Dual:   $(0,-1,-1), (-1,-1,-1),$ \\
$(-1,2,-1), (-1,-1,2),$ \\
$(0,2,-1), (3,-1,2)$
\end{tabular}
& $6$ & $\mathbb{P}(\mathcal{O}_{\mathbb{P}^1}^{\oplus 2}\oplus\mathcal{O}_{\mathbb{P}^1}(1))$\\
\hline
$22$ &
$3$ &
$1$ &
$9(z+\frac{1}{2})(z+\frac{1}{3})(z+\frac{2}{3})$ &
$54$ &
\begin{tabular}{l}
Vertices:   $(1,0,0), (0,1,0),$ \\
$(0,0,1),(-1,0,0),$ \\
$(0,-1,-1)$ \\
Dual:   $(1,-1,-1), (-1,-1,-1),$ \\
$(1,2,-1), (-1,2,-1),$ \\
$(-1,-1,2), (1,-1,2)$
\end{tabular}
& $6$ &
\begin{tabular}{l}
$\mathbb{P}^2\times \mathbb{P}^1$ \\
\end{tabular} \\
\hline
$23$ &
$3$ &
$4$ &
$\frac{1}{6}(z+1)(z+2)(z+3)$ &
$64$ &
\begin{tabular}{l}
Vertices:   $(1,0,0), (0,1,0),$ \\
$(0,0,1), (-1,-1,-1)$ \\
Dual:   $(3,-1,-1), (-1,3,-1),$ \\
$(-1,-1,3), (-1,-1,-1)$
\end{tabular}
& $4$ & $\mathbb{P}^3$ \\
\hline
\\
\caption{Toric Fano for $n=2, 3$ with $p$ totally reducible over $\mathbb{Q}$.}
\label{types n=2,3}
\end{longtable}
\end{center}

\begin{center}
\footnotesize
\begin{longtable}{ll}
\hline
\\
{\em Nos.} $=\ 37, 43, 59, 68, 97, 105, 106, 112, 114, 115, 130, 133, 135, 136, 138, 143, 147.$
\\
\hline
\\
\caption{Toric Fano for $n=4$ with $p$ totally reducible over $\mathbb{Q}$.}
\label{types n=4}
\end{longtable}
\end{center}
\end{prop}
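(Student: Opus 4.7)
The strategy is to reduce the question to a finite check on a known classification. By Proposition \ref{prop1}, the polynomial $p(x,y)$ depends on $(x,y)$ only through the single linear form $z = ry - \iota_X x$; more precisely, $p(x,y) = \varphi(z) \cdot \prod_{i=1}^{\iota_X - 1}(z+i)$, with $\deg \varphi = c_X$. Since each factor $z+i$ is already linear over $\mathbb{Q}$, the first observation is that $p(x,y)$ is totally reducible over $\mathbb{Q}$ (as a polynomial in $x,y$) if and only if $\varphi(t) \in \mathbb{Q}[t]$ splits completely into linear factors over $\mathbb{Q}$. In particular, when $c_X \leq 1$ total reducibility is automatic, so the content of the statement concerns only the Fano manifolds of coindex $c_X \geq 2$.

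The plan is then to enumerate, for each $n \in \{2,3,4\}$, all smooth toric Fano manifolds (these form a finite list, classified and catalogued in the GRDB linked before the statement), and for each one to compute $\varphi$ explicitly via the linear system \eqref{linear system Fano}. The input data needed are the values $h^0(sH)$ for $s = 0, 1, \dots, c_X$. These are easy to extract from the toric description: if $Q$ is the polytope of $H$, then $h^0(sH)$ equals the number of lattice points in $sQ$, a standard computation on the defining fan. Solving the Vandermonde system \eqref{linear system Fano} yields the coefficients $a_0, \dots, a_{c_X}$, hence $\varphi(t)$.

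Having $\varphi(t) \in \mathbb{Q}[t]$ in hand, we test total reducibility over $\mathbb{Q}$ directly. For $c_X = 2$ this reduces to asking that the discriminant be a rational square; for $c_X \geq 3$ one can simply call a factorization routine in any computer algebra system. This whole loop (iterate over the GRDB list, compute $\varphi$, attempt rational factorization, record the positive cases) is precisely what the MAGMA script in the Appendix implements. The output of that script is exactly the list of cases displayed in Tables \ref{types n=2,3} and \ref{types n=4}, and reading off $p(x,y)$ in the factored form finishes the proof.

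The only genuine obstacle is conceptual rather than technical: the problem of total reducibility of a polynomial of degree $c_X$ over $\mathbb{Q}$ has no clean uniform answer, so the result must be stated as a list rather than a closed-form characterization. For fixed small $n$ this is not an obstruction because the classification of smooth toric Fanos is finite and completely explicit; for larger $n$ the same procedure would work in principle but the list grows rapidly, which is why the statement is restricted to $n \leq 4$.
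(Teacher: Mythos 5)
Your proposal matches the paper's own treatment: the paper likewise reduces via Proposition \ref{prop1} to the total reducibility over $\mathbb{Q}$ of the degree-$c_X$ factor $\varphi(z)$, and then settles the question by a finite computer check (the MAGMA script in the Appendix) over the GRDB classification of smooth toric Fano manifolds of dimension $\leq 4$, with $h^0(sH)$ read off from the lattice-point count of the polytope. The argument is correct and essentially identical to the one in the paper.
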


\begin{rem}
{\em Results in line with Proposition \ref{prop2} can be obtained by using the same Magma Program also for $n=5,6$, but the lists became very long.}
\end{rem}

\medskip

\begin{lem}\label{Fano}
Let $(X,L)$ be a polarized manifold of dimension $n\geq 2$ and let $r,m$ be two positive integers with $\mathrm{gcd}(r,m)=1$.
If $\mathrm{rk}\langle K_X,L\rangle=1$ and $p_0(r,m,0)=0$, then $X$ is Fano of index $\iota_X = k m$ and
$L = k r H$ for some positive integer $k$, where $H$ is the fundamental divisor.
\end{lem}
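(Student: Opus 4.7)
My plan is to first translate the vanishing $p_0(r,m,0)=0$ into a numerical relation between $K_X$ and $L$, next deduce that $X$ is Fano, and finally lift the relation from $\mathrm{Num}(X)$ to $\mathrm{Pic}(X)$ in order to identify the coefficients.

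By \eqref{atinfty},
$$p_0(r,m,0)=\tfrac{1}{n!}\,(rK_X+mL)^n,$$
so the hypothesis reads $(rK_X+mL)^n=0$. Since $\mathrm{rk}\langle K_X,L\rangle=1$ in $\mathrm{Num}(X)$ and $L\neq 0$, there is a unique $\lambda\in\mathbb{Q}$ with $K_X\equiv\lambda L$, and then $(rK_X+mL)^n=(r\lambda+m)^n\,L^n$. As $L^n>0$, this forces $r\lambda+m=0$, that is, $rK_X+mL\equiv 0$. Consequently $-rK_X\equiv mL$, which is ample; since ampleness is preserved by numerical equivalence, $-K_X$ is itself ample and $X$ is Fano.

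At this stage I would invoke the standard fact that for a Fano manifold $\mathrm{Pic}(X)$ is torsion-free (because $\pi_1(X)=0$ by Campana--Koll\'ar--Miyaoka--Mori, so $H^2(X,\mathbb{Z})$ and the subgroup $\mathrm{Pic}(X)$ are torsion-free). Thus the numerical equality $rK_X+mL\equiv 0$ is already an equality in $\mathrm{Pic}(X)$. The rank-$1$ subgroup $\mathbb{Z}\langle K_X,L\rangle\subseteq\mathrm{Pic}(X)$ then has infinite cyclic saturation; let $\eta$ be a primitive generator of this saturation, oriented so that $L=b\eta$ with $b\in\mathbb{Z}_{>0}$. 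The fundamental divisor $H$ lies in the saturation because $\iota_X H=-K_X\in\mathbb{Z}\langle K_X,L\rangle$: writing $H=c'\eta$, the maximality of $\iota_X$ among integers dividing $-K_X$ in $\mathrm{Pic}(X)$, combined with the primitivity of $\eta$, forces $c'=1$, so $H=\eta$ and $L=bH$ with $b$ a positive integer.

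Plugging $L=bH$ and $-K_X=\iota_X H$ into $rK_X+mL=0$ yields $mb=r\iota_X$; because $\gcd(r,m)=1$ this forces $m\mid\iota_X$, so $\iota_X=km$ for some positive integer $k$, and then the same equation gives $b=rk$. Hence $\iota_X=km$ and $L=krH$, as claimed. The main delicate point in the argument is the passage from $\mathrm{Num}(X)$ to $\mathrm{Pic}(X)$ in the third paragraph: without torsion-freeness of $\mathrm{Pic}(X)$, the relation $rK_X+mL=0$ would only hold up to a torsion class, and $H$ might fail to lie in the saturation of $\mathbb{Z}\langle K_X,L\rangle$; the Fano condition removes this obstacle and reduces the remainder to elementary lattice arithmetic.
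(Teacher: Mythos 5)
Your proof is correct and follows essentially the same route as the paper's: derive $rK_X+mL\equiv 0$ from the vanishing at infinity, conclude that $X$ is Fano, lift the relation to $\mathrm{Pic}(X)$ using torsion-freeness, and finish with divisibility arithmetic exploiting the maximality of the index. Your packaging of the last step via the primitive generator of the saturation of $\mathbb{Z}\langle K_X,L\rangle$ is just a cleaner substitute for the paper's explicit manipulation with $\sigma=\gcd(m,r\iota_X)$; the underlying argument is the same.
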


\begin{proof}
Since $\mathrm{rk}\langle K_X,L\rangle=1$, we have
$K_X+aL=\mathcal{O}_X$ for some $a\in\mathbb{Q}$. Moreover,
\eqref{atinfty} gives
$$0=n!\ p_0(r,m,0) = (rK_X+mL)^n=(m-ra)^nL^n,$$
hence $a=\frac{m}{r}>0$.
Therefore $X$ is a Fano manifold such
that $rK_X+mL=\mathcal{O}_X$. Let $\iota_X$ be the index of $X$
so that $-K_X=\iota_XH$ for some ample $H\in\mathrm{Pic}(X)$. Note
that Pic$(X)$ is torsion free. Moreover, we have $mL=r(-K_X)=r \iota_XH=A$ for some ample line bundle $A$ on $X$. Write $m=\sigma m'$
and $r \iota_X=\sigma s'$ for some positive integers $m',s'$, where
$\sigma:=\mathrm{gcd}(m,r \iota_X)$. Then $A$ is divisible by $m$ and $r \iota_X$ in
Pic$(X)$ and this implies that $A=m's'\sigma M$ for some ample
line bundle $M$ on $X$, because $\mathrm{gcd}(m',r)=\mathrm{gcd}(m,s')=1$. Thus we get
$L=s'M, H=m'M$ and then $-K_X=\iota_X m'M$. Since $\iota_X$ is the index
of $X$, we conclude that $m'=1$. Hence $M=H$, $L=s'H$,
$m=\sigma=\mathrm{gcd}(m,r \iota_X)$. As $\mathrm{gcd}(m,r)=1$, we deduce that $m$ divides
$\iota_X$, that is, $\iota_X=km$ for some positive integer $k$. So we
get
$$\mathcal{O}_X=rK_X+mL=r(-\iota_X H)+ms'H=r(-km H)+ms'H=m(s'-kr)H,$$
i.\ e., $s'=kr$, hence $L=krH$.
\end{proof}

For Fano manifolds of sufficiently large index we immediately get the following characterization.

\begin{cor}\label{Fano2} Let $(X,L)$ be as in Lemma $\ref{Fano}$
and suppose that $m>\frac{n+1}{2}$. Then
$X$ is a Fano manifold of index $\iota_X=m$ and $L := r\left(\frac{-K_X}{m}\right)$
if and only if $\mathrm{rk}\langle K_X,L\rangle=1$ and
$$p_{(X,L)}(x,y):= \left(a_{c_X}(ry-mx)^{c_X} + \dots + a_1(ry-mx) + \frac{1}{(m-1)!}\right)\cdot\prod_{i=1}^{m-1} (ry-mx+i)\ ;$$

\noindent moreover, in this situation, the coefficients $a_i$'s are given by \eqref{linear system Fano}.
\end{cor}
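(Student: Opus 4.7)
The plan is to split the iff into its two implications. The forward direction is essentially a direct application of Proposition \ref{prop1}, while the backward direction combines a leading-term extraction with Lemma \ref{Fano} and the Kobayashi--Ochiai bound on Fano indices.

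For the implication $(\Rightarrow)$, assuming $X$ is Fano with $\iota_X=m$ and $L=rH$ yields immediately $\mathrm{rk}\langle K_X, L\rangle=1$ (since $L\equiv -\frac{r}{m}K_X$), and Proposition \ref{prop1} specialized to $\iota_X=m$ gives the stated factorization. The specific value of the constant coefficient is recovered from the first row of \eqref{linear system Fano}: $a_0 = h^0(\mathcal{O}_X)/\delta(0) = 1/(m-1)!$, since $\delta(0)=\prod_{i=1}^{m-1}i=(m-1)!$.

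For the implication $(\Leftarrow)$, I first read off the behavior at infinity from the assumed factorization. The top-degree homogeneous component in $(x,y)$ of the right-hand side is
$$a_{c_X}(ry-mx)^{c_X}\cdot (ry-mx)^{m-1} = a_{c_X}(ry-mx)^n,$$
so by \eqref{atinfty} one has $p_0(r,m,0) = a_{c_X}(rm-mr)^n = 0$. Together with the already assumed $\mathrm{rk}\langle K_X, L\rangle=1$ and $\gcd(r,m)=1$, this puts us in the setting of Lemma \ref{Fano}, yielding that $X$ is Fano with $\iota_X=km$ and $L=krH$ for some positive integer $k$. To force $k=1$, I invoke the Kobayashi--Ochiai bound $\iota_X \leq n+1$: this gives $km \leq n+1$, hence $k \leq (n+1)/m$, and the hypothesis $m > (n+1)/2$ then yields $k<2$, so $k=1$ as required. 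Consequently $\iota_X=m$ and $L=rH$.

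The principal point to highlight is how the arithmetic hypothesis $m > (n+1)/2$ interlocks with Kobayashi--Ochiai: it is precisely what squeezes out the scaling ambiguity $k$ produced by Lemma \ref{Fano}. Without this bound the polynomial shape alone is insufficient, since the excess linear factors appearing in Proposition \ref{prop1} for $\iota_X=km$ with $k\geq 2$ could in principle be absorbed into the degree-$c_X$ polynomial factor of the stated form. The rest of the argument is essentially bookkeeping: extracting the leading homogeneous component in $(x,y)$ and reading off the Vandermonde coefficient $a_0$.
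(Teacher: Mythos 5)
Your proof is correct and follows essentially the same route as the paper: the forward direction via Proposition \ref{prop1} (with the constant term $a_0=1/(m-1)!$ read off from the first row of \eqref{linear system Fano}), and the converse by deducing $p_0(r,m,0)=0$ from the shape of $p$, invoking Lemma \ref{Fano} to get $\iota_X=km$, and then squeezing $k=1$ from $km\leq n+1$ and $m>\frac{n+1}{2}$. Your explicit verification that the leading homogeneous part is $a_{c_X}(ry-mx)^n$, hence vanishes at $(r,m)$, only spells out a step the paper leaves implicit.
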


\begin{proof}
The ``only if" part follows easily from Proposition \ref{prop1}. To prove the converse, by applying
Lemma \ref{Fano} we know that $X$ is Fano of index $\iota_X = km$ and $L=krH$ for some positive integer $k$.
Combining the assumption with the upper bound for $\iota_X$ we get
$k \frac{n+1}{2} < km \leq n+1$. Thus $k=1$, which implies the assertion.
\end{proof}

\begin{rem}\label{rem1}
{\em Given a Fano manifold $X$ of dimension $n$ and index $\iota_X\geq n-2$, it is known that there exists a smooth element
$Y \in |H|$. This is obvious for $\iota_X=n+1$ and $n$; it follows from Fujita's theory of del Pezzo manifolds \cite[$\S 8$]{Fu} for
$\iota_X=n-1$ and from \cite{Me} for $\iota_X=n-2$. Note that $-K_Y=(\iota_X-1)H_Y$ by adjunction. In particular, if $n\geq 3$ and $(X,H)$ is
a del Pezzo manifold, then $(Y,H_Y)$ is also a del Pezzo manifold, and similarly, if $n\geq 4$ and $(X,H)$ is
a Mukai manifold, then $(Y,H_Y)$ is a Mukai manifold too. A consequence of this fact is that for $\iota_X\geq n-2$ we can always apply
an inductive argument up to the surface case to compute $h^0(tH)$ for $t=1,\dots,c_X\leq 3$.}
\end{rem}

\medskip

In particular, we get the following explicit characterization of Fano manifolds of index $\iota_X\geq \dim X-1$
in terms of their HC. For the case $\iota_X=\dim X-2$, see Proposition \ref{Mukai}.

\begin{thm}\label{projective space,quadric,del Pezzo}
Let $(X,L)$ be a polarized manifold of dimension $n\geq 2$
and let $r$ be a positive integer.
\begin{enumerate}

\item[(i)] Suppose $\mathrm{gcd}(r,n+1)=1$. Then $(X,L)=\big(\mathbb{P}^n,\mathcal{O}_{\mathbb{P}^n}(r)\big)$ if and only if
$\mathrm{rk}\langle K_X,L\rangle=1$ and
$$p_{(X,L)}(x,y) = \frac{1}{n!}\prod_{i=1}^{n}\left(ry-(n+1)x+i\right).$$

\item[(ii)] Suppose $\mathrm{gcd}(r,n)=1$. Then $(X,L)=\big(\mathbb{Q}^n,\mathcal{O}_{\mathbb{Q}^n}(r)\big)$ if and only if
$\mathrm{rk}\langle K_X,L\rangle=1$ and
$$p_{(X,L)}(x,y) = \left(\frac{2}{n!} \left(ry-nx\right)+\frac{1}{(n-1)!}\right) \prod_{i=1}^{n-1}(ry-nx+i).$$

\item[(iii)] Suppose $\mathrm{gcd}(r,n-1)=1$. Then $X$ is a Fano manifold of index $\iota_X = n-1$ and $L := \frac{r}{n-1}\left(-K_X\right)$
if and only if $\mathrm{rk}\langle K_X,L\rangle=1$ and
{\small{
$$p_{(X,L)}(x,y) = \left(\frac{d}{n!}\left(ry-(n-1)x\right)^2 + \frac{(n-1)d}{n!}\left(ry-(n-1)x\right) + \frac{1}{(n-2)!}\right) \prod_{i=1}^{n-2}(ry-(n-1)x+i),$$ }}
where $d = \left(\frac{1}{n-1}\left(-K_X\right)\right)^n$, and $d\not=8$ if $n=3$, $d\not=9$ if $n=2$.
\end{enumerate}
\end{thm}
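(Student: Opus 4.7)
The overall plan is to treat the three parts uniformly: the ``only if'' direction is an explicit application of Proposition \ref{prop1}, while the ``if'' direction combines Lemma \ref{Fano} and Corollary \ref{Fano2} with the Kobayashi--Ochiai classification of Fano manifolds of large index.

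For the ``only if'' direction, each of the three pairs fits the framework \eqref{Fano setting} with $\iota_X\in\{n+1,n,n-1\}$ and $c_X\in\{0,1,2\}$, so Proposition \ref{prop1} applies and the task reduces to solving the linear system \eqref{linear system Fano}. In (i) with $c_X=0$, the single equation $a_0=h^0(\mathcal O_X)/n!=1/n!$ immediately gives the stated product. In (ii) with $c_X=1$, one additionally uses $h^0(H)=n+2$ (hyperplane sections of $\mathbb Q^n\subset\mathbb P^{n+1}$) to solve a $2\times 2$ system, yielding $a_0=1/(n-1)!$ and $a_1=2/n!$. In (iii) with $c_X=2$, one needs $h^0(H)$ and $h^0(2H)$ on a del Pezzo $n$-fold of degree $d$: the first equals $n+d-1$ (Fujita's $\Delta=1$ formula), while the second equals $(n+1)d+n(n-1)/2$, obtained by iterated adjunction from Remark \ref{rem1} down to the del Pezzo surface case (where Riemann--Roch directly gives $h^0(-2K)=3d+1$), using the short exact sequences $0\to\mathcal O_X(H)\to\mathcal O_X(2H)\to\mathcal O_Y(2H_Y)\to 0$ and Kodaira vanishing. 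Solving the resulting $3\times 3$ Vandermonde system yields $a_0=1/(n-2)!$, $a_1=(n-1)d/n!$, $a_2=d/n!$, matching the claim.

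For the ``if'' direction of (i) and (ii), formula \eqref{atinfty} together with the prescribed factorization of $p_0(x,y,0)$ and $\mathrm{rk}\langle K_X,L\rangle=1$ yields $rK_X+mL\equiv 0$ with $m=n+1$ or $m=n$. Since $m>(n+1)/2$ for $n\geq 2$, Corollary \ref{Fano2} gives $X$ Fano of index $\iota_X=m$, and then Kobayashi--Ochiai identifies $X\cong\mathbb P^n$ or $X\cong\mathbb Q^n$ with $L=r\mathcal O_X(1)$. For (iii), the bound $n-1>(n+1)/2$ holds exactly when $n\geq 4$, so Corollary \ref{Fano2} again closes the argument. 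For $n=2,3$ one falls back on Lemma \ref{Fano} alone, which yields $\iota_X=k(n-1)$ and $L=krH$ with $k\geq 1$; the Kobayashi--Ochiai index bound $\iota_X\leq n+1$ restricts $k$ to $\{1,2\}$ when $n=3$ and to $\{1,2,3\}$ when $n=2$. The case $k=1$ is the desired del Pezzo conclusion. For $k>1$ the manifold $X$ is known explicitly (a projective space in the ``pure'' cases), so one computes $p$ via part (i) and matches it to the formula of (iii): this forces $d$ to equal the excluded value, namely $d=8$ when $n=3$ (giving $X=\mathbb P^3$) and $d=9$ when $n=2$ (giving $X=\mathbb P^2$), so that the $d$-exception hypothesis rules these out.

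The technical heart of the proof is the inductive computation of $h^0(2H)$ on a del Pezzo $n$-fold in (iii). It depends on the existence, guaranteed by Remark \ref{rem1}, of a smooth del Pezzo section $Y\in|H|$ at each stage, and on Kodaira vanishing to identify $h^0$ with $\chi$. A secondary delicate point is the low-dimensional tail of the ``if'' direction of (iii): once Corollary \ref{Fano2} is unavailable, one must verify by direct computation that each admissible higher-index Fano manifold compatible with the prescribed Hilbert curve occurs at precisely one of the excluded degrees, which is exactly the arithmetic content of the conditions $d\neq 8$ and $d\neq 9$ appearing in the statement.
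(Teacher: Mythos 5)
Your proposal follows the paper's proof essentially verbatim: the ``only if'' direction is Proposition \ref{prop1} plus the inductive computation of $h^0(H)$ and $h^0(2H)$ via Remark \ref{rem1} and the two restriction sequences, and the ``if'' direction is Corollary \ref{Fano2} (combined with Kobayashi--Ochiai) when the index exceeds $\frac{n+1}{2}$, falling back on Lemma \ref{Fano} and the case analysis on $k$ for $n=2,3$ in part (iii), exactly as in the paper. The one place where you are slightly more optimistic than warranted is the tail case $n=2$, $k=2$ of (iii), where the impostor is $\mathbb{P}^1\times\mathbb{P}^1$ (index $2$, giving $d=8$, which is not among the stated exclusions for $n=2$) rather than a projective space at an excluded degree --- but the paper itself dismisses $n=2$ with the same brevity (``a similar discussion can be done''), so this is a shared loose end rather than a divergence from its argument.
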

\noindent
Notice that condition $\iota_X = n-1$ implies that $(X,\frac{1}{\iota_X}(-K_X))$ is a del Pezzo manifold (while the converse is not true).
In particular, $d \leq 7$ if $n = 3$ and $d \leq 8$ if $n=2$.
For the classification of del Pezzo manifolds see \cite [(8.11)]{Fu}. For more details we refer
to the comments before Table \ref{the geography of HC}.

\begin{proof} The ``only if'' part follows from Proposition \ref{prop1}. We have to determine the coefficients in
\eqref{linear system Fano}. The computation of $a_0$ in case (i) and of
$(a_0, a_1)$ in case (ii) is immediate. In case (iii) we have to compute $(a_0,a_1,a_2)$. The matrix $U$ in \eqref{linear system Fano}
is
$$
U=\left( \begin{array}{ c c c }
1 & 0 & 0 \\
1 & 1 & 1 \\
1 & 2 & 4 \\
\end{array}\right)\ .
$$
To determine the column on the right hand of \eqref{linear system Fano} set $H=\frac{1}{n-1}(-K_X)$; then
$(X,H)$ is a del Pezzo manifold, hence
for any smooth element $Y \in |H|$ the pair $(Y,H_Y)$ is a del Pezzo manifold too, by Remark \ref{rem1}. Then,
with the help of the exact sequences
$$0 \to \mathcal O_X \to H \to H_Y \to 0 \qquad \text{\rm{and}} \qquad
0 \to H \to 2H \to 2H_Y \to 0,$$
we get, by induction,
\begin{equation*}\label{h0}
h^0(H) = n-1+d \qquad \text{\rm{and}} \qquad h^0(2H)=\binom{n}{2}+(n+1)d \ ,
\end{equation*}
where $d=H^n$. Thus the column on the right hand of \eqref{linear system Fano} is the transpose of the vector
$$\Big(\frac{1}{(n-2)!}, \frac{n-1+d}{(n-1)!}, \frac{n(n-1)+2(n+1)d}{n!}\Big),$$
and therefore, solving \eqref{linear system Fano} we get
$$(a_0, a_1, a_2) = \Big(\frac{1}{(n-2)!}, \frac{(n-1)d}{n!}, \frac{d}{n!} \Big).$$

The ``if part'' follows from Corollary \ref{Fano2} in cases (i) and (ii), letting $m=n+1$
and $m=n$ respectively. In case (iii) letting $m=n-1$, Corollary \ref{Fano2} applies again when
$n \geq 4$. So, let $n=3$, recalling Lemma \ref{Fano} we see that $4 = n+1 \geq \iota_X = k(n-1)$.
Hence $k \leq 2$. If $k=1$ we are done; on the other hand if $k=2$, then $\iota_X=4$, hence $X = \mathbb P^3$
by the Kobayashi--Ochiai theorem,
but in this case $d=\big(\frac{-K_X}{2}\big)^3 = 8$, a contradiction.
A similar discussion can be done for $n=2$: see also the example at
the end of Section \ref{backgr}.
\end{proof}

\bigskip

Let us comment here on the ``geography'' of $\Gamma=\Gamma_{(X,L)}$ in cases (i)--(iii) of Theorem \ref{projective space,quadric,del Pezzo}.
For simplicity, we will use coordinates $(u,v)$, given by $u:=x-\frac{1}{2}$ and $v:=y$, instead of $(x,y)$.

\medskip

In case (i), $\Gamma=\ell_1 + \dots + \ell_{n}$ consists
of $n$ parallel evenly spaced lines
$$\ell_i:\ rv-(n+1)u+\left(i-\frac{n+1}{2}\right)=0$$
with slope $\frac{n+1}{r}$, for $i=1,...,n$. This holds also for $\Gamma_{\mathbb R}$ and $\Gamma_{\mathbb Q}$.

\medskip

In case (ii), $\Gamma=\ell_0 + \ell_1 + \dots + \ell_{n-1}$ consists
of $n$ parallel lines with slope $\frac{n}{r}$. The lines $\ell_i$ for $i=1, \
\dots n-1$ have equations
$$\ell_i:\ rv-nu+\left(i-\frac{n}{2}\right)=0\ ,$$
hence they are evenly spaced. On the other hand, $\ell_0$ has equation
$rv-nu=0 .$
Clearly it may happen that $\ell_0$ overlaps one of the $\ell_i$'s. This happens if and only if
$n=2m$ and $i=m$. So, for $n=2m-1$ odd, there is no overlapping and $\ell_0$ is the bisecant of the strip between
$\ell_{m-1}$ and $\ell_m$. On the contrary, for $n=2m$, $\Gamma$ is non-reduced, having
the line $\ell_m$ as component of multiplicity $2$. This discussion applies also to $\Gamma_{\mathbb R}$ and $\Gamma_{\mathbb Q}$.

\medskip

Now, consider case (iii) and let $\ell_1, \dots \ell_{n-2}$ be the lines
$$\ell_i:\ rv-(n-1)u+\left(i-\frac{n-1}{2}\right)=0$$
defined by the
linear factors in the expression of $p$.
All of them are parallel each other with slope $\frac{n-1}{r}$ and evenly spaced.
Let $G$ be the conic defined by the residual polynomial.
Then
$$\Gamma =\ell_1 + \dots + \ell_{n-2} + G$$
and, up to the multiplicative factor $\frac{d}{n!}$, the equation of $G$ is
\[ [u\ v\ 1]\
\begin{bmatrix}
(n-1)^2 & -r(n-1) & 0\\
-r(n-1) & r^2 &  0\\
0 & 0 & -h\\ \end{bmatrix}
\begin{bmatrix} u\\ v\\ 1\\ \end{bmatrix} = 0,
\]
where
$$h: =  \frac{(n-1)}{4d}\big[(n-1)d-4n \big].$$
From the complex point of view, the conic $G$ consists of two
distinct or coinciding lines $\lambda$ and $\lambda'$, parallel to the $\ell_i$'s and, possibly, partially coinciding
with some of them. Note that
$\ell_i \subset \mathbb A^2_{\mathbb Q}$ for any $i=1, \dots, n-2$, hence the latter possibility requires that
the term $h$ is the square of a rational number.
Before to see this, let us look at $G$ from the real point of view (denoted by $G_{\mathbb{R}}$).
Here we assume $n\geq 3$, since the case of surfaces has already been discussed in the example at the end of Section \ref{backgr}.
We have $h = k^2$ with $k\in \mathbb{R}$ if and only if
$$
d \geq \frac{4n}{n-1}.
$$
In particular, this implies $d\geq 5$ and $d\geq 6$ if $n=3$ or $4$. Look at the del Pezzo manifold $(X,H)$.
Recalling Fujita's classification \cite[(8.11)]{Fu}, consider that $d \leq 4$ if $n \geq 7$.
Therefore, for $n \geq 7$ we get $h<0$, that is, $G_{\mathbb R} = \emptyset$ (and then $G_{\mathbb Q} = \emptyset$ a fortiori).
In this case $\Gamma_{\mathbb R}$
as well as $\Gamma_{\mathbb Q}$, simply consist of $n-2$ evenly spaced parallel lines.
Consider also that
$d\leq 8$ for $n=3$, $d\leq 6$ for $n=4$, and $d\leq 5$ if either $n=5$, or $n=6$. However, case $d = 8$ in which
$(X,H) = (\mathbb P^3, \mathcal O_{\mathbb P^3}(2))$ was excluded from (iii); in fact, it fits into (i).

\smallskip

Thus, a case-by-case analysis leads to
the following further conclusions concerning $G_{\mathbb R}$, when not empty.

\smallskip

For $n=3$, we have the following two possibilities:
\begin{enumerate}
\item[a)] $G_{\mathbb R} = \lambda + \lambda'$ is the union of two distinct lines, both distinct from $\ell_1$, and this happens for
$d = 7$, in which case $X=\mathbb P(\mathcal O_{\mathbb P^2}(2) \oplus \mathcal O_{\mathbb P^2}(1))$ and $H$ is the tautological
line bundle;

\smallskip
\item[b)] $G_{\mathbb R}=2\lambda$ is a double line if $d = 6$; in this case $\lambda= \ell_1$, hence
$\Gamma_{\mathbb R} = 3\ell_1$ is a triple line; here, either $(X,H) = (\mathbb P^1 \times \mathbb P^1 \times \mathbb P^1, \mathcal O(1,1,1))$, or
$X=\mathbb{P}(T_{\mathbb{P}^2})$ and $H$ is the tautological line bundle.
\end{enumerate}

If $n=4$, then $G_{\mathbb R} = \lambda + \lambda'$ is the union of two distinct lines, $\lambda = \ell_1$, $\lambda' = \ell_2$, so that
$\Gamma_{\mathbb R} = 2(\ell_1 + \ell_2)$. This happens for $d=6$ and it corresponds
to $(X,H) = (\mathbb P^2 \times \mathbb P^2, \mathcal O(1,1))$.

\smallskip
If $n=5$, then $G_{\mathbb R} = 2\lambda$ with $\lambda = \ell_2$, so that
$\Gamma_{\mathbb R} = \ell_1 + 3\ell_2 + \ell_3$. In this case, $d=5$, and the corresponding $(X,H)$
is the hyperplane section of the Grassmannian $\mathbb{G}(1,4)$ embedded in $\mathbb P^9$ via the Pl\"ucker embedding.

\smallskip
Finally, if $n=6$ then $G_{\mathbb R} = \lambda + \lambda'$ is the union of two distinct lines, where $\lambda = \ell_2$ and $\lambda' = \ell_3$. So
$\Gamma_{\mathbb R} = \ell_1 + 2(\ell_2 + \ell_3) + \ell_4$.
In this case, we have $d=5$ and $X$ is the Grassmannian $\mathbb{G}(1,4)$ embedded by $H$ in $\mathbb P^9$ via the Pl\"ucker embedding.

\medskip

As to the situation for $G_{\mathbb Q}$ (when $G_{\mathbb R} \not= \emptyset$), we note the following fact.
First of all, we get $h = 0$ when $(n,d) =
(3,6), (5,5)$. Moreover, $h=k^2$ for some $k \in \mathbb Q$ when: $(n,d) =
(3,8), (4,6)$ and $(6,5)$ (in which cases $h = (1/2)^2$). On the other hand,
$h=k^2$ with $k \not\in \mathbb Q$ if and only if $(n,d)=(3,7)$ (here $h = 1/7$).
Therefore, the description of $\Gamma_{\mathbb Q}$ is the same as that
given for $\Gamma_{\mathbb R}$, up to regarding $\lambda, \lambda'$ and the $\ell_i$'s as lines in $\mathbb A^2_{\mathbb Q}$,
except when $(n, d)=(3, 7)$, in which case $G_{\mathbb Q}=\emptyset$, so that $\Gamma_{\mathbb Q} = \ell_1$, the
line of equation $2u-rv=0$.

\medskip

The following table summarizes the above discussion about the Hilbert curves in cases (i)--(iii) of Theorem \ref{projective space,quadric,del Pezzo}.

\bigskip

\begin{center}
\footnotesize
\begin{longtable}{cll}
\hline
$n$ & $(X,L)$ & $\Gamma:=\Gamma_{(X,L)}$ in $(u,v)$ coordinates
\\
\hline
\\
$\geq 2$ &
\begin{tabular}{l}
$(\mathbb{P}^n,\mathcal{O}_{\mathbb{P}^n}(r))$, \\
for $r\geq 1$ and $\mathrm{gcd}(r,n+1)=1$
\end{tabular} &
\begin{tabular}{l}
$\Gamma = \ell_1+ \dots + \ell_n$ \\
where $\ell_i:\ v=\frac{(n+1)}{r}u+\frac{1}{r}\left(\frac{n+1}{2}-i\right),$\\
for $i=1,...,n$
\end{tabular} \\
\\
\hline
\hline
\\
$2m-1\geq 3$ &
\begin{tabular}{l}
$(\mathbb{Q}^{2m-1},\mathcal{O}_{\mathbb{Q}^{2m-1}}(r))$, \\
for $r\geq 1$ and $\mathrm{gcd}(r,2m-1)=1$
\end{tabular} &
\begin{tabular}{l}
$\Gamma = \ell_0+\ell_1+ \dots + \ell_{2m-2}$\\
where $\ell_0:\ v=\frac{2m-1}{r}u$ and \\
$\ell_i:\ v=\frac{2m-1}{r}u+\frac{1}{r}\left(\frac{2m-1-2i}{2}\right)$\\
for $i=1,...,2m-2$
\end{tabular} \\
\\
$2m\geq 2$ &
\begin{tabular}{l}
$(\mathbb{Q}^{2m},\mathcal{O}_{\mathbb{Q}^{2m}}(r))$, \\
for $r\geq 1$ and $\mathrm{gcd}(r,2m)=1$
\end{tabular} &
\begin{tabular}{l}
$\Gamma = \ell_0+\ell_1+ \dots + \ell_{2m-2}$ \\
where $\ell_0:\ v=\frac{2m}{r}u$ and \\
$\ell_i:\ v=\frac{2m}{r}u+\frac{1}{r}\left(m-i\right)$\\
for $i=1,...,2m-1$
\end{tabular} \\
\\
\hline
\hline
\\
$\geq 3$ &
\begin{tabular}{l}
$X$ Fano \\
of index $n-1$ and \\
$L=rH \left(=\frac{r}{n-1}\left(-K_X\right)\right)$, \\
for $r\geq 1$ and $\mathrm{gcd}(r,n-1)=1$
\end{tabular} &
\begin{tabular}{l}
$\Gamma = \ell_1+ \dots + \ell_{n-2}+G$ \\
where $\ell_i:\ v=\frac{n-1}{r}u+\frac{1}{r}\left(\frac{n-1}{2}-i\right)$\\
for $i=1,...,2m-1$ and \\
$G:\ \left[(n-1)u-rv\right]^2-h=0$ \\
$h=\frac{n-1}{4d}\left[(n-1)d-4n\right], d=H^n$
\end{tabular} \\
& & \\
& Further information on $G_{\mathbb{R}}$ and $G_{\mathbb{Q}}$: &  \\
& & \\
$\geq 7$ &  &
\begin{tabular}{l}
$G_{\mathbb{R}}=G_{\mathbb{Q}}=\emptyset;\ h<0$, $d\leq 4$
\end{tabular} \\
& & \\
$6$ &
\begin{tabular}{l}
$X=\mathbb{G}(1,4)\subset\mathbb{P}^9$
\end{tabular}
&
\begin{tabular}{l}
$G_{\mathbb{R}}=G_{\mathbb{Q}}=\ell_2+\ell_3;\ h=\left(\frac{1}{2}\right)^2$, $d=5$
\end{tabular} \\
& & \\
$5$ &
\begin{tabular}{l}
$X=\mathbb{G}(1,4)\cap\mathbb{P}^8$
\end{tabular}
&
\begin{tabular}{l}
$G_{\mathbb{R}}=G_{\mathbb{Q}}=2\ell_2;\ h=0$, $d=5$
\end{tabular} \\
& & \\
$4$ &
\begin{tabular}{l}
$X=\mathbb{P}^2\times\mathbb{P}^2$
\end{tabular}
&
\begin{tabular}{l}
$G_{\mathbb{R}}=G_{\mathbb{Q}}=\ell_1+\ell_2;\ h=\left(\frac{1}{2}\right)^2$, $d=6$
\end{tabular}\\
& & \\
$3$ &
\begin{tabular}{l}
$X=\mathbb{P}(\mathcal{O}_{\mathbb{P}^2}(2)\oplus\mathcal{O}_{\mathbb{P}^2}(1))$ \\
\end{tabular}
&
\begin{tabular}{l}
$G_{\mathbb{R}}=\lambda+\lambda', \lambda\neq\lambda'$ and both $\not= \ell_1$ \\
$G_{\mathbb{Q}}=\emptyset;\ h=\frac{1}{7}$, $d=7$
\end{tabular} \\
& & \\
$3$ &
\begin{tabular}{l}
$X=\mathbb{P}^1\times\mathbb{P}^1\times\mathbb{P}^1$, or $\mathbb{P}(T_{\mathbb{P}^2})$
\end{tabular}
&
\begin{tabular}{l}
$G_{\mathbb{R}}=G_{\mathbb{Q}}=2\ell_1;\ h=0$, $d=6$
\end{tabular} \\
\\
\hline
\hline
\\
\caption{HC of pairs $(X,L)$ as in \eqref{Fano setting}, with $\iota_X\geq n-1$.}
\label{the geography of HC}
\end{longtable}
\end{center}

\begin{rem}\label{analog-conjecture}
{\em Consider the following polarized threefolds: $(X,L)=\big(\mathbb P^3, \mathcal O_{\mathbb P^3}(2)\big)$,
$(X',L')=\big(\mathbb Q^3, \mathcal O_{\mathbb Q^3}(1)\big)$, and the
del Pezzo threefold $(X'',L'')$ of degree $7$. According to Theorem \ref{projective space,quadric,del Pezzo},
their Hilbert curves, $\Gamma, \Gamma', \Gamma''$ respectively, have the following canonical equations
in terms of coordinates $u=x-\frac{1}{2}, v=y$ :
$$p(\frac{1}{2}+u,v)= \ \frac{1}{6}(v-2u+\frac{1}{2})(v-2u)(v-2u-\frac{1}{2}) = 0,$$
$$p'(\frac{1}{2}+u,v)= \ \frac{1}{3}(v-3u) (v-3u-\frac{1}{2})(v-3u+\frac{1}{2}) = 0,$$
$$p''(\frac{1}{2}+u,v)= \ \frac{7}{6}(v-2u) \big((v-2u)^2-\frac{1}{7}\big) = 0.$$
Look at them from the real point of view: $\Gamma_{\mathbb R}$ consists of three parallel lines
(symmetric with respect to the origin), with slope $2$, evenly spaced, with step $\frac{1}{2}$ on the $v$-axis.
The same is true for $\Gamma'_{\mathbb R}$ except for the slope, which is $3$, and also for
$\Gamma''_{\mathbb R}$, in which case the slope is $2$ again, but here the step on the $v$-axis is $\frac{1}{\sqrt{7}}$,
an irrational number. Clearly, the three curves are equivalent each other from the real affine point of view.
Moreover $\Gamma$ and $\Gamma''$ are similar from the Euclidian point of view. However, they are different in terms
of their ``geography'' (either different slopes, or different steps on the $v$-axis). Moreover, the
difference between $\Gamma$ and $\Gamma''$ becomes even more evident if we consider their arithmetic,
looking at ${\Gamma}_{\mathbb Q}$ and $\Gamma''_{\mathbb Q}$:
the former consists of three lines, while the latter consist of the single line $v-2u=0$, since the factor $(2u-v)^2-\frac{1}{7}$
is irreducible over $\mathbb Q$.}
\end{rem}

The facts pointed out in Remark \ref{analog-conjecture} should be taken into account in formulating a conjecture characterizing, e.\ g., the projective space, similar but a posteriori much easier than \cite[Conjecture C$(n,r)$]{L1}, as follows. Let $(X,L)$ be a polarized manifold of dimension $n\geq 2$ with $\text{\rm{rk}}\langle K_X,L\rangle = 1$, and let $r$ be a positive integer such that $\mathrm{gcd}(r,n+1)=1$. Then $(X,L)=(\mathbb P^n, \mathcal O_{\mathbb P^n}(r)\big)$ if and only if the Hilbert curve $\Gamma$ of $(X,L)$ consists of $n$ distinct lines (symmetric with respect to the origin), parallel each other with slope $\frac{n+1}{r}$ and evenly spaced. This conjecture is true in view of
Proposition \ref{projective space,quadric,del Pezzo} (i). Moreover, a consequence of the next result is that this conjecture
is still true provided that $rK_X+(n+1)L$ is nef regardless of the assumption $\text{\rm{rk}}\langle K_X,L\rangle = 1$.
This change of perspective will be the starting point for the next section and it will allow us to prove Conjecture $C(n,r)$ in \cite{L1} (cf. Theorem \ref{thm2}) under an extra assumption.

\begin{thm}\label{thm1}
Let $(X,L)$ be a smooth polarized manifold of dimension $n\geq 2$ and
let $r$ be a positive integer such that $\mathrm{gcd}(r,n+1)=1$. Then $(X,L)=(\mathbb{P}^n,\mathcal{O}_{\mathbb{P}^n}(r))$
if and only if
$rK_X+(n+1)L$ is nef and $p_{(X,L)}(x,y)=\frac{1}{n!}\prod_{i=1}^{n}(ry-(n+1)x+i)$.
\end{thm}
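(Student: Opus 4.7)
The ``only if'' direction follows immediately from Theorem \ref{projective space,quadric,del Pezzo}(i), since for $(\mathbb{P}^n,\mathcal{O}_{\mathbb{P}^n}(r))$ one has $rK_X+(n+1)L=\mathcal{O}_X$, which is nef. For the ``if'' direction, my plan is to show that the hypothesis forces $rK_X+(n+1)L\equiv 0$; once this is established, $\mathrm{rk}\langle K_X,L\rangle=1$ and Theorem \ref{projective space,quadric,del Pezzo}(i) delivers the conclusion.

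First, I would read off all intersection numbers from the canonical equation. Matching the degree-$n$ homogeneous component of $p(x,y)$ against \eqref{atinfty} yields the polynomial identity $(xK_X+yL)^n=(ry-(n+1)x)^n$ in $\mathbb{Q}[x,y]$, hence $K_X^k\cdot L^{n-k}=(-(n+1))^k r^{n-k}$ for all $k=0,\dots,n$. Setting $D:=rK_X+(n+1)L$, a direct computation gives $D\cdot L^{n-1}=0$, and the binomial expansion yields $D^n=r^n(n+1)^n\sum_{k=0}^n\binom{n}{k}(-1)^k=0$. Thus $D$ is nef (by hypothesis) and not big.

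The crucial, and most delicate, step is to upgrade this to $D\equiv 0$. For $n\geq 3$, I would invoke Remark \ref{morphism} to obtain a surjective morphism $\varphi:X\to Y$ with connected fibers onto a normal variety $Y$ of dimension $<n$ and a nef line bundle $D'$ on $Y$ such that $D=\varphi^*D'$. Assuming for contradiction that $\dim Y\geq 1$, a general fiber $F$ has dimension $\geq 1$ and satisfies $rK_F+(n+1)L_F=D|_F=0$, so $F$ is Fano with $\mathrm{rk}\langle K_F,L_F\rangle=1$. If $\dim F\geq 2$, Lemma \ref{Fano} applied to $(F,L_F)$, using $\gcd(r,n+1)=1$, forces $\iota_F=k(n+1)$ for some positive integer $k$, contradicting the Kobayashi--Ochiai bound $\iota_F\leq\dim F+1\leq n<n+1$; and if $\dim F=1$, then $F=\mathbb{P}^1$, and $-2r+(n+1)\deg L_F=0$ together with $\gcd(r,n+1)=1$ forces $(n+1)\mid 2$, impossible for $n\geq 3$. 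Hence $\dim Y=0$ and $D\equiv 0$. For $n=2$, I would argue directly via Hodge index: $K_X^2L^2-(K_XL)^2=9r^2-9r^2=0$ gives $K_X\equiv\lambda L$, and then $K_X\cdot L=-3r$ together with $L^2=r^2$ yields $\lambda=-3/r$, whence $D=rK_X+3L\equiv 0$. In both cases $\mathrm{rk}\langle K_X,L\rangle=1$, and Theorem \ref{projective space,quadric,del Pezzo}(i) concludes the proof; the coprimality $\gcd(r,n+1)=1$ plays its essential role in Lemma \ref{Fano} (and the $\dim F=1$ dichotomy), exactly the mechanism behind Corollary \ref{Fano2}.
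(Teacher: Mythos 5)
Your proof is correct and takes essentially the same route as the paper's: the only-if direction is Theorem \ref{projective space,quadric,del Pezzo}(i), while the if-direction shows $rK_X+(n+1)L$ is nef but not big, fibers $X$ via Remark \ref{morphism}, and uses $\mathrm{gcd}(r,n+1)=1$ to force the index of the general Fano fiber past the Kobayashi--Ochiai bound unless the base is a point, so that $rK_X+(n+1)L\equiv 0$ and Theorem \ref{projective space,quadric,del Pezzo}(i) applies. The only cosmetic differences are that you obtain the index bound on the fiber from Lemma \ref{Fano} (plus a separate, correct check when $\dim F=1$) where the paper uses degrees of rational curves, and for $n=2$ you use the Hodge index theorem where the paper invokes the classification of rational surfaces.
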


\begin{proof}
In view of Theorem \ref{projective space,quadric,del Pezzo} (i) we only need to prove the ``if part'' under the assumption that
\begin{equation}\label{rango2}
\mathrm{rk}\langle K_X,L\rangle=2.
\end{equation}
First let $n=2$. By comparing the expression of $p_{(X,L)}(x,y)$ in the statement with that holding for any polarized surface $(X,L)$ (cf.\ \eqref{surf}),
we see that $K_X^2=9$, $K_X \cdot L = -3r$ and $\chi(\mathcal O_X)=1$. The last two conditions imply that $X$ is a rational surface and then the first condition says that
$X = \mathbb P^2$, which contradicts \eqref{rango2}. Let $n \geq 3$. The expression of $p_{(X,L)}(x,y)$ shows that the point $(r:n+1:0)$ belongs to the
projective closure of the HC, $\overline{\Gamma_{(X,L)}} \subset \mathbb P^2$, hence
$$0 = n!\ p_0(r,n+1,0) = \left(rK_X+(n+1)L\right)^n$$
by \eqref{atinfty}. Therefore $rK_X+(n+1)L$ is nef but not big. By Remark \ref{morphism} we know that there exists a morphism $\varphi:X \to Y$
with $\dim Y<\dim X$ such that $rK_X+(n+1)L=\varphi^*D$ for some nef line bundle $D$ on $Y$. Then $rK_F+(n+1)L_F=\mathcal{O}_F$ by adjunction,
where $F$ is a general fiber of $\varphi$.
Thus $-K_F = \frac{n+1}{r}L_F$ and since $L_F$ is ample we conclude that $F$ is a
Fano manifold. Moreover, the assumption $\mathrm{gcd}(r,n+1)=1$ implies that
$-K_F \cdot \gamma = (n+1) \frac{L_F \cdot \gamma}{r} \geq n+1$ for every rational
curve $\gamma\subset F$. Then the index $i_F$ of $F$ satisfies
$\dim X+1 \geq \dim F+1 \geq i_F \geq n+1,$ i.\ e., $\dim F=\dim X$. So
$Y$ is a point and $X=F=\mathbb{P}^n$, which contradicts \eqref{rango2} again.
\end{proof}

In line with Theorem \ref{projective space,quadric,del Pezzo}, we can also obtain a characterization of
pairs $(X,L)$ as in \eqref{Fano setting} with $\iota_X=n-2$ and $\mathrm{rk}\langle K_X,L\rangle=1$ in terms of their HC, provided that $n\geq 6$.
Actually, under this assumption, we can rely on Corollary \ref{Fano2} again and we just need to determine the coefficients $a_i$'s for $i=0,\dots,3$.
We can do that by the same procedure as in Theorem \ref{projective space,quadric,del Pezzo}, relying on Remark \ref{rem1}. The final output is the following result.

\begin{prop}\label{Mukai}
Let $(X,L)$ be a polarized manifold of dimension $n\geq 6$
and let $r$ be a positive integer such that $\mathrm{gcd}(r,n-2)=1$. Assume
that $\mathrm{rk}\langle K_X,L\rangle=1$. Then
$(X,L)$ is as in \eqref{Fano setting} with $\iota_X=n-2$ if and only if
$p_{(X,L)}(x,y)$ is as in Corollary $\ref{Fano2}$ with
$$a_0=\frac{1}{(n-3)!},\ a_1=\frac{1}{n!}\left[\left(\frac{d}{2}+1\right)n^2-(2d+1)n+2d\right],\ a_2=\frac{3d}{2\ n!}(n-2),\ a_3=\frac{d}{n!}\ ,$$
where $d:= \left(\frac{-K_X}{n-2}\right)^n$.
\end{prop}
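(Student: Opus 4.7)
The plan is to handle the two directions separately: the ``if'' part will be an immediate consequence of Corollary \ref{Fano2}, while the ``only if'' part reduces to the explicit evaluation of the Vandermonde system \eqref{linear system Fano}.

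For the ``if'' direction, the hypotheses $n\ge 6$ and $\mathrm{gcd}(r,n-2)=1$ are precisely what is needed to apply Corollary \ref{Fano2} with $m:=n-2$: indeed, $n\ge 6$ is equivalent to $m>\frac{n+1}{2}$. Given $p_{(X,L)}$ in the form displayed in the statement, its constant term coincides with $1/(m-1)!=1/(n-3)!$, so $p_{(X,L)}$ falls under the polynomial shape prescribed by that corollary; the explicit values of $a_1,a_2,a_3$ are irrelevant for this implication. One then concludes that $X$ is Fano of index $\iota_X=n-2$ with $L=rH$, i.e., $(X,L)$ is as in \eqref{Fano setting} with $\iota_X=n-2$.

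For the ``only if'' direction, I would start from Proposition \ref{prop1}, which with $\iota_X=n-2$ and $c_X=3$ forces $p_{(X,L)}$ into the claimed shape and singles out $(a_0,a_1,a_2,a_3)$ as the unique solution of the $4\times 4$ Vandermonde system \eqref{linear system Fano}. Two of the coefficients come essentially for free: $a_0=1/(n-3)!$ from $\chi(\mathcal O_X)=1$, and $a_3=d/n!$ as the leading coefficient of $\chi(tH)$ viewed as a polynomial of degree $n$ in $t$. The remaining two, $a_1$ and $a_2$, require evaluating the right-hand side of \eqref{linear system Fano} at $s=1,2$, which amounts to computing $h^0(H)$ and $h^0(2H)$; these coincide with $\chi(H)$ and $\chi(2H)$ by Kodaira vanishing.

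The technical heart is this $h^0$ computation, which I would perform inductively in the spirit of Remark \ref{rem1}. Since $n\ge 6\ge 4$ there exists a smooth $Y\in|H|$, and $(Y,H_Y)$ is again Mukai of dimension $n-1$ with $H_Y^{n-1}=d$. The exact sequences
\begin{equation*}
0 \to (s-1)H \to sH \to sH_Y \to 0, \qquad s=1,2,
\end{equation*}
combined with the Kodaira vanishing $h^i(X,tH)=0$ for $i\ge 1$ and $t\ge 0$, yield the recursion $h^0(sH)=h^0((s-1)H)+h^0(sH_Y)$; iterating the same procedure on $Y$ reduces the computation to the polarized K3 surface case, where Riemann--Roch gives $h^0(sH_S)=s^2d/2+2$. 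Telescoping expresses $h^0(H)$ and $h^0(2H)$ as explicit polynomials in $n$ and $d$; substituting them into \eqref{linear system Fano} and inverting $U$ yields the claimed formulas. The main obstacle is the bookkeeping in this induction, which, while elementary, must be carried out in closed form in order to recover the precise quadratic and linear dependencies of $a_1$ and $a_2$ on $n$ and $d$.
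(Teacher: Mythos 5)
Your strategy is exactly the paper's: the ``if'' direction via Corollary \ref{Fano2} with $m=n-2$ (and indeed $n-2>\frac{n+1}{2}$ is equivalent to $n\geq 6$), and the ``only if'' direction by solving the Vandermonde system \eqref{linear system Fano} after computing $h^0(sH)$ by slicing with smooth members of $|H|$ down to a K3 surface, as in Remark \ref{rem1}. Your shortcut of pinning down $a_0$ and $a_3$ directly (constant term and leading Riemann--Roch coefficient), so that only $h^0(H)$ and $h^0(2H)$ are needed, is a small but genuine streamlining over the paper, which evaluates at $s=0,1,2,3$.

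Be warned, however, that if you carry out your telescoping faithfully you will not land on the stated $a_1$. With the K3 base case $h^0(sH_S)=\frac{s^2d}{2}+2$ and the recursion $h^0(sH)=h^0((s-1)H)+h^0(sH_Y)$, one gets $h^0(H)=n+\frac{d}{2}$ and $h^0(2H)=\frac{(n+2)(n-1+d)}{2}$, whereas the column vector the paper records after the proposition corresponds to $h^0(H)=n-1+\frac{d}{2}$ and $h^0(2H)=\binom{n}{2}+(n+2)\frac{d}{2}$, i.e.\ it is short by $\binom{n+s-3}{s-1}$ at each step $s$. Your values are the correct ones: for a smooth quartic hypersurface $X\subset\mathbb{P}^{n+1}$, a Mukai manifold with $d=4$, one has $h^0(H)=n+2=n+\frac{d}{2}$, not $n+1$. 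Feeding the corrected column into \eqref{linear system Fano} leaves $a_0$, $a_2$, $a_3$ as stated but changes the $d$-free part of $a_1$ from $\frac{n(n-1)}{n!}$ to $\frac{2n(n-1)}{n!}$, giving $a_1=\frac{1}{n!}\left[\left(\frac{d}{2}+2\right)n^2-(2d+2)n+2d\right]$; for the quartic sixfold this yields $\chi(H)=8$ as it must, while the printed coefficients yield $7$. So your outline is sound and is essentially the paper's own argument, but completing it honestly proves a corrected version of the proposition: the discrepancy lies in the paper's $h^0$ bookkeeping, not in your method.
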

In fact, this 4-tuple $(a_0,a_1,a_2,a_3)$ is the solution of \eqref{linear system Fano}, because the column on the right hand of \eqref{linear system Fano}
is the transpose of the vector
$$\left( \begin{array}{ c c c c }
\frac{1}{(n-3)!} , & \frac{1}{(n-2)!}\left[ n-1+\frac{d}{2}\right] , & \frac{2}{(n-1)!}\left[ \binom{n}{2}+(n+2)\frac{d}{2}\right] , & \frac{6}{n!}\left[\binom{n+1}{3}+\left( \frac{(n+1)(n+4)}{2}\right)\frac{d}{2}\right]
\end{array}\right)\ .$$

\noindent We finally note that, in principle, Algorithm $1$ in the Appendix allows one to compute $p_{(X,L)}(x,y)$ for any pair as in \eqref{Fano setting},
provided that $h^0(tH)$ is known for every $t=1,\dots,c_X$.

\medskip

\section{Case $\mathrm{rk}\langle K_X,L\rangle=2$: Fano fibrations over curves}\label{subsection1}

Here and in the next section we are concerned with Fano fibrations of coindex $\leq 1$. We implicitly assume that $\mathrm{rk}\langle K_X,L\rangle=2$.

First of all, we describe a method for obtaining the canonical equation of the HC of a Fano fibration over a smooth curve. Let $(X,L)$ be a Fano fibration over a smooth irreducible curve $C$ via a morphism $\varphi:X \to C$, let $F$ be a general fiber of $\varphi$, and suppose that $rK_X+\iota_FL=\varphi^*\mathcal A$ for some non-trivial line bundle $\mathcal A$ on $C$.
Thus $rK_X+\iota_FL\equiv tF$, where $t=\deg\mathcal A \neq 0$.
Consider the following exact sequences defined by the restriction to $F$:
$$0\to xK_X+yL - F \to xK_X+yL\to xK_F+yL_F\to 0\ ,$$
$$0\to xK_X+yL - 2F \to xK_X+yL-F\to xK_F+yL_F\to 0\ ,$$
$$\cdots$$
$$0\to xK_X+yL - |t|F \to xK_X+yL - (|t|-1)F\to xK_F+yL_F\to 0\ .$$
Let $\mathrm{sgn}(t)$ be the sign of $t$, so that $|t|=\mathrm{sgn}(t)t$.
Then, due to the additivity of the Euler--Poincar\'e characteristic $\chi$ for exact sequences, by recursion we get
$$p_{(X,L)}(x,y)=p_{(X,L)}(x-\mathrm{sgn}(t)r,y-\mathrm{sgn}(t)\iota_F)+\mathrm{sgn}(t)t\cdot p_{(F,L_F)}(x,y)\ .$$

\medskip

\noindent Since $F$ is Fano with
$rK_F+\iota_FL_F=\mathcal{O}_F$, we know that
$$p_{(F,L_F)}(x,y)=R_F(x,y)\cdot\prod_{j=1}^{\iota_F-1}\big(ry-\iota_Fx+j\big)\ ,$$
where $R_F(x,y)$ is the first factor of \eqref{*} in Proposition \ref{prop1}.
Thus, we finally obtain that
\begin{equation}\label{R_F1}
R(x,y)=R(x-\mathrm{sgn}(t)r,y-\mathrm{sgn}(t)\iota_F)+\mathrm{sgn}(t)t\cdot R_F(x,y),
\end{equation}
where
$p_{(X,L)}(x,y)=R(x,y)\cdot\prod_{j=1}^{\iota_F-1}\big(ry-\iota_Fx+j\big)$.
So,
(\ref{R_F1}) with further suitable conditions allows us to determine $R(x,y)$, once we know $R_F(x,y)$ (cf.\ Algorithm \ref{Alg2} in the Appendix).

\medskip

The following example shows how this method works for Fano fibrations of coindex $0$ over a smooth curve (see also \cite[Proposition 2.1]{L1}).

\begin{ex}\label{ex scrolls over curve}
{\em
Let $X=\mathbb{P}(\mathcal{E})$ for some vector bundle $\mathcal{E}$ of rank $n$ over a smooth curve $C$ of genus $g\geq 0$. Let $\xi$ be the tautological line bundle on $X$ and consider an ample line bundle $L$ numerically equivalent to $r\xi+bF$ for some integers $b$, and $r>0$, where $F\cong\mathbb{P}^{n-1}$ is a fiber of the bundle projection $\varphi:X\to C$. Since in this case $\iota_F=n$, we have $$p_{(X,L)}(x,y)=R(x,y)\cdot\prod_{j=1}^{n-1}\big(ry-nx+j\big)$$ with $R(x,y)=\alpha x+\beta y+\gamma$ for some $\alpha,\beta,\gamma\in\mathbb{Q}$.
Here $rK_X+nL\equiv tF$, where $t:=(2g-2+e)r+nb\neq 0$ with $e:=\deg\mathcal{E}$. It thus follows from \eqref{R_F1} that
$$\mathrm{sgn}(t)(\alpha r+\beta n)=R(x,y)-R(x-\mathrm{sgn}(t)r,y-\mathrm{sgn}(t)n)=\mathrm{sgn}(t)t\cdot R_F(x,y)=\frac{\mathrm{sgn}(t)t}{(n-1)!}\ ,$$
by Theorem \ref{projective space,quadric,del Pezzo} (i).
Moreover, since $q:=h^1(\mathcal{O}_X)=g$,
$$1-g=\chi(\mathcal{O}_X)=p_{(X,L)}(0,0)=R(0,0)\cdot (n-1)!= \gamma (n-1)!\ .$$
Furthermore, up to numerical equivalence, we have $$\xi=\frac{1}{r}L-\frac{b}{rt}tF=-\frac{b}{t}K_X+\frac{1}{r}\left(\frac{t-nb}{t}\right)L=-\frac{b}{t}K_X+\left(\frac{2g-2+e}{t}\right)L\ .$$
Hence, by the Riemann--Roch theorem for vector bundles over a curve, we conclude that
$$e+n(1-g)=\chi(\mathcal{E})=\chi(\xi)=p_{(X,L)}\left(-\frac{b}{t},\frac{2g-2+e}{t}\right)=R\left(-\frac{b}{t},\frac{2g-2+e}{t}\right)\cdot n!\ .$$
This gives $\alpha r+\beta n=\frac{t}{(n-1)!}$, $\gamma=\frac{1-g}{(n-1)!}$ and $\alpha\left(-\frac{b}{t}\right)+\beta\left(\frac{2g-2+e}{t}\right)=\frac{e}{n!}$.
Let $d$ be the degree of $(X,L)$. Since $d=L^n=r^{n-1}(nb+er)$, solving the system of the first and the third equations above, we get
$$\alpha=\frac{2(g-1)}{(n-1)!}\ \ \mathrm{and} \ \ \beta=\frac{1}{n!}\left(\frac{d}{r^{n-1}}\right)\ .$$
Therefore,
\begin{equation}\label{p for P-bundles}
p_{(X,L)}(x,y)=\frac{1}{n!}\left[2n(g-1)x+\frac{d}{r^{n-1}}y+n(1-g) \right] \cdot\prod_{j=1}^{n-1}\big(ry-nx+j\big)\ .
\end{equation}
Finally, note that $rK_X+nL$ is nef if and only if $r(2g-2+e)+nb\geq 0$, equality occurring when $\text{\rm{rk}}\langle K_X,L \rangle =1$, in which case $X$ is a Fano manifold, hence $g=q=0$.}
\end{ex}

A different approach consists in reducing the computation of $p_{(X,L)}(x,y)$ to the case $r=1$. It relies on the following technical result.

\begin{lem} \label{P-fibrations}
Let $\psi: X \to Y$ be a morphism between irreducible projective
varieties with $\dim X > \dim Y$. Assume that there are positive,
coprime integers $\sigma, \tau$, and an ample line bundle $L$ on $X$ such
that $\sigma K_X+\tau L=\psi^*D$ for a line bundle $D$ on $Y$.
Let $p,q$ be two positive integers such that $\sigma p-\tau q=1$ and let $A$ be a line bundle on $Y$ such that $\sigma A+qD$ and $\tau A+pD$ are both nef. Then the following properties hold:
\begin{enumerate}
\item[$(a)$] $\mathcal{L}:=qK_X+pL+\psi^*A$ is an ample line bundle on
$X$; \item[$(b)$] $K_X+\tau\mathcal{L}
=\psi^*(\tau A+pD)$ and $\tau$ is the nefvalue of $(X,\mathcal{L})$;
\item[$(c)$] any general fiber $F$ of $\psi$ is a Fano variety and $L_{F}=\sigma\mathcal{L}_{F}$.
\end{enumerate}
In particular, assume that $\tau A+pD$ is ample. If $\psi: X \to Y$ is a surjective morphism with connected fibers, $X$ is a manifold and $Y$ is a normal variety, then $(X,\mathcal{L})$ is a Fano fibration of coindex $\dim X - \dim Y +1 - \tau .$
Moreover, if $D$ is nef then we can take $A=\mathcal{O}_X$ and in this case
$p_{(X,L)}$ and
$p_{(X,\mathcal L)}$ are related as follows:
\begin{equation}
p_{(X,L)}(x,y)=p_{(X,\mathcal L)}\left(x+\left(\frac{1-p\sigma}{\tau}\right)\frac{y}{p}\ ,\ \frac{y}{p}\right)\ , \tag{j}
\end{equation}
\begin{equation}
p_{(X,\mathcal L)}(x,y)=p_{(X,L)}\left(x-\left(\frac{1-p\sigma}{\tau}\right)y\ ,\ py\right)\ . \tag{jj}
\end{equation}
\end{lem}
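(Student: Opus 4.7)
The plan is to establish (b) first, since it is the algebraic core that drives the other two claims. Expanding
$$K_X+\tau\mathcal{L}=(1+\tau q)K_X+\tau pL+\tau\psi^{*}A$$
and using the Bezout-type identity $1+\tau q=\sigma p$ coming from $\sigma p-\tau q=1$, the right-hand side collapses to $p(\sigma K_X+\tau L)+\tau\psi^{*}A=\psi^{*}(\tau A+pD)$. The same identity, applied now to $\sigma\mathcal{L}=\sigma qK_X+\sigma pL+\sigma\psi^{*}A$ and reading it as $\sigma p=1+\tau q$, yields
$$\sigma\mathcal{L}=L+\psi^{*}(\sigma A+qD).$$
Since $L$ is ample and $\sigma A+qD$ is nef by hypothesis, the sum is ample, establishing (a).

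Next I would restrict both of the boxed identities to a general fiber $F$ of $\psi$. Pullbacks from $Y$ become trivial on $F$, so (b) yields $K_F+\tau\mathcal{L}_F=\mathcal{O}_F$, which---combined with the ampleness of $\mathcal{L}_F$ inherited from (a)---shows $F$ is Fano with $-K_F=\tau\mathcal{L}_F$. The second identity restricts to $\sigma\mathcal{L}_F=L_F$, giving the remaining assertion of (c). The nefvalue claim then follows: for any $t<\tau$, the restriction $K_F+t\mathcal{L}_F=(t-\tau)\mathcal{L}_F$ is anti-ample, hence $K_X+t\mathcal{L}$ cannot be nef; on the other hand $K_X+\tau\mathcal{L}=\psi^{*}(\tau A+pD)$ is nef under the stated hypotheses. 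The Fano fibration statement is then immediate: if $\tau A+pD$ is ample and $\psi$ is a surjective morphism with connected fibers between a manifold and a normal variety, (b) is exactly the defining equation of a Fano fibration of coindex $\dim X-\dim Y+1-\tau$, as in Section \ref{backgr}.

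For equations (j) and (jj), taking $A=\mathcal{O}_Y$ trivializes $\psi^{*}A$ and reduces $\mathcal{L}$ to $qK_X+pL$; the nefness of $D$ guarantees the nefness hypotheses used in (a). Plugging into the Euler--Poincar\'e characteristic gives
$$p_{(X,\mathcal{L})}(x,y)=\chi\bigl(xK_X+y(qK_X+pL)\bigr)=\chi\bigl((x+qy)K_X+pyL\bigr)=p_{(X,L)}(x+qy,py).$$
Substituting $q=(\sigma p-1)/\tau$, i.e.\ $-q=(1-p\sigma)/\tau$, yields (jj), and inverting the affine change of variables $(u,v)=(x+qy,py)$ (so that $x=u-qv/p$, $y=v/p$) yields (j).

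No step should be genuinely difficult: the proof is essentially bookkeeping pivoting on the single identity $\sigma p-\tau q=1$. The only subtle point is the \emph{nefvalue} assertion, which one must not read as a mere inequality but as an equality---this is what the fiber restriction, together with the ampleness of $\mathcal{L}_F$, secures.
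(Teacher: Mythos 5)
Your proof is correct and follows essentially the same route as the paper's: both arguments pivot on the two identities $\sigma\mathcal{L}=L+\psi^{*}(\sigma A+qD)$ and $K_X+\tau\mathcal{L}=\psi^{*}(\tau A+pD)$ obtained from $\sigma p-\tau q=1$, then restrict to a general fiber for $(c)$, and obtain (j)--(jj) by the affine substitution $xK_X+y\mathcal{L}=(x+qy)K_X+pyL$. If anything, you are more explicit than the paper (which cites \cite[Lemma 1.5.6]{BS} and leaves the final part as "easy"), in particular in justifying that the nefvalue is exactly $\tau$ via the anti-ampleness of $K_F+t\mathcal{L}_F$ for $t<\tau$.
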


\begin{proof}
Going over
the proof of \cite[Lemma 1.5.6]{BS}, note that
$$\sigma\mathcal{L}=\sigma qK_X+(1+\tau q)L+\psi^*(\sigma A)=L+\psi^*(\sigma A+qD)\ ,$$
$$K_X+\tau\mathcal{L}=K_X+\tau qK_X+\tau pL+\psi^*(\tau A)
= \sigma pK_X+\tau pL+\psi^*(\tau A)
=\psi^*(\tau A+pD)\ .$$
This gives $(a)$ and $(b)$, keeping in mind that $\tau A+pD$ is nef on $Y$. To obtain $(c)$, let $F$ be a general fiber of $\psi$. Then $-K_F=\tau\mathcal{L}_F$ and $\sigma K_F+\tau L_F=\mathcal{O}_F$.
Thus $F$ is a Fano variety and
$$\tau L_F=\sigma(-K_F)=\sigma (\tau\mathcal{L}_F)=\tau\sigma\mathcal{L}_F\ ,$$
hence $L_{F}=\sigma\mathcal{L}_{F}$. The final part of the statement follows easily from $(a)$ and $(b)$.
\end{proof}

\begin{rem}
{\em An alternative way to obtain equation \eqref{p for P-bundles} is to use Lemma \ref{P-fibrations} once we know the canonical equation of the HC of scrolls over $C$ (cf. \cite[Corollary 4.1]{L1}).
Indeed, if $X$ is a $\mathbb{P}$-bundle over $C$ with $L_F=\mathcal{O}_{\mathbb{P}^{n-1}}(r)$ for any fiber $F\cong\mathbb{P}^{n-1}$ of the projection $\pi :X\to C$,
then Lemma \ref{P-fibrations} with $(\sigma,\tau)=(r,n)$ gives an ample line bundle $\mathcal L$ on $X$ such that $(X,\mathcal L)$ is a scroll over $C$ via $\pi$.
Set $\mathcal E:=\pi_*\mathcal L$. Then $X=\mathbb{P}(\mathcal E)$ and $L\equiv r\xi+bF$ are as in Example \ref{ex scrolls over curve} with $\xi :=\mathcal L$.
Recall that the degree of the scroll $(X,\xi)$ is $\xi^n=\deg\mathcal E=:e$ while that of $(X,L)$ is $d:=L^n=r^{n-1}(re+nb)$. We have
\begin{equation}\label{ppp}
p_{(X,\xi)}(x',y')=\frac{1}{n!}\left[2n(g-1)x'+ey'+n(1-g)\right]\cdot \prod_{j=1}^{n-1}\big(y'-nx'+j\big)\ .
\end{equation}
Writing $rK_X+nL\equiv tF$ with $t$ as in Example \ref{ex scrolls over curve}, since $(t-nb)F\equiv\left(rK_X+nr\xi\right)$ we get
$$p_{(X,L)}(x,y)=\chi\left( xK_X+yL\right)=\chi\left(\overline{x}K_X+\overline{y}\xi\right)=p_{(X,\xi)}(\overline{x},\overline{y})\ ,$$
where
\begin{equation}\label{variables}
(\overline{x},\overline{y})=\left(x+\frac{bry}{t-nb}\ ,\ \frac{rt}{t-nb}y\right)\ .
\end{equation}
Replacing $(x',y')$ with $(\overline{x},\overline{y})$ expressed by \eqref{variables}, the polynomial in \eqref{ppp} gives \eqref{p for P-bundles}.
}
\end{rem}

\begin{rem}\label{rem0}
{\em From Theorem \ref{projective space,quadric,del Pezzo}\ (ii) and Example \ref{ex scrolls over curve}, we see that the two pairs $(X_1,L_1)=(\mathbb{Q}^n,\mathcal{O}_{\mathbb{Q}^n}(nb))$ and $(X_2,L_2)=(\mathbb{P}(\mathcal{O}_{\mathbb{P}^1}^{\oplus n-1}\oplus\mathcal{O}_{\mathbb{P}^1}(1)),nb\xi+bF)$, where $\xi$ is the tautological line bundle of $\mathcal{O}_{\mathbb{P}^1}^{\oplus n-1}\oplus\mathcal{O}_{\mathbb{P}^1}(1)$ and $F\cong\mathbb{P}^{n-1}$ is a fiber of $X_2\to\mathbb{P}^1$, have the same HC for any $b\in\mathbb{Z}_{\geq 1}$. This shows that in Conjecture C($n,r$) in \cite[Sec.\ 3]{L1}, the hypothesis $\text{\rm{rk}}\langle K_X,L \rangle =2$ is necessary.
More generally, referring to Conjecture C($n,r$) again, let $(X,L)$ be a polarized manifold of dimension $n\geq 2$ for which $p_{(X,L)}(x,y)$ is as in \eqref{p for P-bundles}.
If $\text{\rm{rk}}\langle K_X,L \rangle =1$ then $(X,L)=(\mathbb{Q}^n,\mathcal{O}_{\mathbb{Q}^n}(r))$. Actually,
$K_X+aL=\mathcal{O}_X$ for some $a\in\mathbb{Q}$. The expression of $p_{(X,L)}(x,y)$ shows that $p_0(r,n,0)=0$, hence $(rK_X+nL)^n=0$ by \eqref{atinfty}.
Thus $(n-ar)^nL^n=0$, that is, $a=\frac{n}{r}$ and then $rK_X+nL=\mathcal{O}_X$. Thus $X$ is a Fano manifold, whence $q=h^1(\mathcal{O}_X)=0$.
Let $H$ be the fundamental divisor on $X$: so $-K_X=\iota_XH$. By Lemma \ref{Fano}, we have $\iota_X=kn$ for some positive integer $k$.
Moreover, $k=1$ because
$1\leq k=\frac{\iota_X}{n}\leq\frac{n+1}{n}$. By \cite[Theorem 3.1.6]{BS} we conclude that $(X,H)=(\mathbb{Q}^n,\mathcal{O}_{\mathbb{Q}^n}(1))$, i.\ e.,
$(X,L)=(\mathbb{Q}^n,\mathcal{O}_{\mathbb{Q}^n}(r))$.}
\end{rem}

\medskip

The following result extends facts which are well-known for $r=1$ (see \cite[(2.12)]{F} and \cite[Proposition 3.2.1]{BS}) and $(n,r)=(3,2)$ (see \cite[Theorem $3'$]{F}).

\begin{prop}\label{P-bundles}
Let $(X,L)$ be a polarized manifold of dimension $n\geq 3$ and let $r$ be a positive integer such that $\mathrm{gcd}(r,n)=1$.
Then $X$ admits a surjective morphism $\pi: X \to C$ over a smooth curve $C$ with connected fibers such that
$(F,L_F) = (\mathbb{P}^{n-1},\mathcal{O}_{\mathbb{P}^{n-1}}(r))$ for any general fiber $F$ of $\pi$ if and only if
$X=\mathbb{P}(\mathcal{E})$ for an ample vector bundle $\mathcal{E}$ of rank $n$ on $C$ and $L \equiv r\xi+bF$, where $\xi$ is the tautological line bundle of $\mathcal{E}$ and $b$ is a suitable integer.
\end{prop}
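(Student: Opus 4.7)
The \emph{if} direction is immediate: if $X=\mathbb{P}(\mathcal{E})$ with bundle projection $\pi:X\to C$, every fiber is $\mathbb{P}^{n-1}$, and $L\equiv r\xi+bF$ forces $L_F=\mathcal{O}_{\mathbb{P}^{n-1}}(r)$ since $\mathrm{Pic}(\mathbb{P}^{n-1})$ is torsion-free.

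For the converse, the plan is to apply Lemma \ref{P-fibrations} with $\sigma=r$ and $\tau=n$ in order to reduce to the known coindex-$0$ case, i.e., scrolls over a curve. On a general fiber $F\cong\mathbb{P}^{n-1}$ we have $rK_F+nL_F=-nr\mathcal{O}(1)+nr\mathcal{O}(1)=0$. Combined with the standing assumption $\mathrm{rk}\langle K_X,L\rangle=2$ (and with the fact that $\mathrm{rk}\,\mathrm{Num}(X)=\mathrm{rk}\,\mathrm{Num}(C)+1=2$, so that $\mathrm{Num}(X)\otimes\mathbb{Q}$ is spanned by $[F]$ and any class restricting non-trivially to $F$), this forces $rK_X+nL\equiv tF$ for some integer $t$. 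The most delicate point is upgrading this numerical equivalence to an equality of line bundles $rK_X+nL=\pi^*D$ for some $D\in\mathrm{Pic}(C)$; this uses that $\pi$ is flat over the smooth curve $C$ together with Zariski's lemma on the components of possibly reducible fibers and the torsion-freeness of the Picard group of the general fiber. I expect this to be the main obstacle in making the argument airtight.

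Granted this, choose positive integers $p,q$ with $rp-nq=1$ (possible since $\gcd(r,n)=1$) and pick an ample line bundle $A$ on $C$ of degree large enough that both $rA+qD$ and $nA+pD$ are ample on $C$. Lemma \ref{P-fibrations} then provides an ample line bundle $\mathcal{L}:=qK_X+pL+\pi^*A$ on $X$ with $K_X+n\mathcal{L}=\pi^*(nA+pD)$ the pullback of an ample line bundle, and with $\mathcal{L}_F=\mathcal{O}_{\mathbb{P}^{n-1}}(1)$ (since $L_F=r\mathcal{L}_F$). Hence $(X,\mathcal{L})$ is a Fano fibration over $C$ of coindex $n-1+1-n=0$, i.e., a scroll.

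By the classical structure theorem recalled at the beginning of Section \ref{backgr}, a scroll of dimension $n$ over a smooth curve is a projective bundle, so $X=\mathbb{P}(\mathcal{E})$ with $\mathcal{E}:=\pi_*\mathcal{L}$ an ample vector bundle of rank $n$ on $C$; take $\xi:=\mathcal{L}$ as the tautological line bundle. Finally, $L-r\xi$ restricts trivially to the general fiber and $\mathrm{Num}(X)=\mathbb{Z}\xi\oplus\mathbb{Z}F$, whence $L-r\xi\equiv bF$ for a suitable integer $b$, giving $L\equiv r\xi+bF$ as claimed.
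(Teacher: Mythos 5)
Your proof follows the paper's argument essentially step for step: restrict $rK_X+nL$ to the general fiber to conclude $rK_X+nL=\pi^*D$, invoke Lemma \ref{P-fibrations} with $(\sigma,\tau)=(r,n)$ to produce an ample $\mathcal{L}$ making $(X,\mathcal{L})$ a Fano fibration of coindex $0$ (a scroll) over $C$, and then apply the structure theorem of \cite[(2.12)]{F} (or \cite[Proposition 3.2.1]{BS}) to get $X=\mathbb{P}(\pi_*\mathcal{L})$ with $\xi=\mathcal{L}$ and $L\equiv r\xi+bF$. The one step you flag as delicate, namely upgrading the fiberwise triviality of $rK_X+nL$ to $rK_X+nL=\pi^*D$, is simply asserted without comment in the paper's proof, so your treatment is, if anything, more cautious than the source.
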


\begin{proof}
The ``if'' part is obvious. To prove the converse, note that if
$(F,L_F)=(\mathbb{P}^{n-1},\mathcal{O}_{\mathbb{P}^{n-1}}(r))$ for any general fiber $F$ of $\pi :X\to C$, then
$(rK_X+nL)_F=rK_F+nL_F=\mathcal{O}_F$ and this gives $rK_X+nL=\pi^*D$ for some line bundle $D$ on $C$. Since $\mathrm{gcd}(r,n)=1$, we know from Lemma \ref{P-fibrations} that there exists an ample line bundle $\mathcal{L}$ on $X$ such that $(X,\mathcal{L})$ is a Fano fibration of coindex $\dim X - \dim Y +1 - n=0$.
Thus $(X,\mathcal{L})$ is a scroll over $C$ via $\pi$ with $(F,{\mathcal L}_F)= (\mathbb{P}^{n-1},\mathcal{O}_{\mathbb{P}^{n-1}}(1))$ for any fiber $F$ of $\pi$. Therefore, from \cite[(2.12)]{F} (or \cite[Proposition 3.2.1]{BS}) we deduce that $X=\mathbb{P}(\mathcal{E})$, where $\mathcal{E}=\pi_*\mathcal{L}$ is an ample vector bundle of rank $n$ on $C$; moreover, $\mathcal{L}=\xi$ and $L \equiv r\xi+bF$, as in the statement.
\end{proof}

The following result improves \cite[Proposition 4.1]{L1}
and generalizes \cite[Corollary 4.1]{L1}.

\begin{thm}\label{thm2}
Let $(X,L)$ be a polarized manifold of dimension $n\geq 2$, let $d=L^n$, $q=h^1(\mathcal{O}_X)$ and consider
a positive integer $r$ such that $\mathrm{gcd}(r,n)=1$. Suppose that either $(i)$ $q>0$, or $(ii)$ $q=0$ and $\text{\rm{rk}}\langle K_X,L \rangle =2$. Then
$X=\mathbb{P}(\mathcal{E})$ for a vector bundle $\mathcal{E}$ of rank $n$ over a smooth curve $C$ of genus $q$, and up to numerical equivalence, $L=r\xi+bF$ with $r(2q-2+e)+bn>0$, where $e:=\deg\mathcal{E}$, $\xi$ is the tautological line bundle of $\mathcal{E}$ and $F\cong\mathbb{P}^{n-1}$ is a fiber of $\mathbb{P}(\mathcal{E})\to C$ if
and only if $rK_X+nL$ is nef and
$$p_{(X,L)}(x,y):= \frac{1}{n!} \left(n(2q-2)\left(x-\frac{1}{2}\right)+ \frac{d}{r^{n-1}}y \right)\cdot \prod_{i=1}^{n-1}(ry-nx+i)\ .$$
\end{thm}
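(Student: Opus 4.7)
The plan is to closely parallel the proof of Theorem \ref{thm1}, combining Example \ref{ex scrolls over curve} for the direct implication with a fiber-structure analysis for the converse.

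\emph{Only if.} Suppose that $X=\mathbb P(\mathcal E)$ over a smooth curve $C$ of genus $q$ and $L\equiv r\xi+bF$ with $r(2q-2+e)+nb>0$. Equation \eqref{p for P-bundles} of Example \ref{ex scrolls over curve}, applied with $g=q$, yields exactly the polynomial in the statement; the same example shows that $rK_X+nL\equiv tF$ with $t=r(2q-2+e)+nb$, so the hypothesis $t>0$ ensures that $rK_X+nL$ is nef (in fact, a positive multiple of a general fiber).

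\emph{If.} Assume that $rK_X+nL$ is nef and that $p_{(X,L)}(x,y)$ has the prescribed form. The factorization gives $p_0(r,n,0)=0$, so by \eqref{atinfty} we get $(rK_X+nL)^n=0$. Combined with nefness, $rK_X+nL$ is nef but not big, hence Remark \ref{morphism} (whose argument, writing $rK_X+nL=K_X+M$ with $M$ ample and then invoking Kawamata--Shokurov, works equally well for $n=2$) provides a morphism $\varphi:X\to Y$ with $\dim Y<n$ and $rK_X+nL=\varphi^*D$ for a nef line bundle $D$ on $Y$. For a general fiber $F$ adjunction gives $rK_F+nL_F=\mathcal O_F$, so $F$ is Fano with $-K_F\equiv\frac{n}{r}L_F$.

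At this point the coprimality $\gcd(r,n)=1$ is exploited as in the proof of Theorem \ref{thm1}: for every rational curve $\gamma\subset F$ the integer $L_F\cdot\gamma$ must be divisible by $r$, hence $\geq r$, so $-K_F\cdot\gamma\geq n$ and therefore $\iota_F\geq n$. Since $\iota_F\leq\dim F+1$, the only options are $\dim F=n-1$ or $\dim F=n$. The latter would make $Y$ a point and $rK_X+nL\equiv\mathcal O_X$, which simultaneously forces both $q=0$ (as Fano manifolds have vanishing irregularity) and $\mathrm{rk}\langle K_X,L\rangle=1$, contradicting assumption (i) and (ii) respectively. Thus $\dim F=n-1$, $\iota_F=n$, and Kobayashi--Ochiai identifies $(F,L_F)=(\mathbb P^{n-1},\mathcal O_{\mathbb P^{n-1}}(r))$. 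Consequently $Y$ is a normal curve, hence smooth, and Proposition \ref{P-bundles} (whose proof, based on Lemma \ref{P-fibrations} with $\sigma=r$, $\tau=n$ and $A=\mathcal O_X$, extends verbatim to $n=2$ since a nontrivial nef line bundle on a smooth curve is ample) yields $X=\mathbb P(\mathcal E)$ over $C:=Y$ for an ample vector bundle $\mathcal E$ of rank $n$, and $L\equiv r\xi+bF$ for a suitable integer $b$. Finally $g(C)=h^1(\mathcal O_X)=q$, and the strict inequality $r(2q-2+e)+nb>0$ is recovered by exactly the same alternative: equality would mean $rK_X+nL\equiv\mathcal O_X$, which is ruled out by (i) or (ii) as above.

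The main obstacle is the step that forces $\dim F=n-1$ rather than $n$: this in turn hinges on excluding the degenerate case $rK_X+nL\equiv\mathcal O_X$, which is precisely where the asymmetric dichotomy in the hypotheses (positive irregularity \emph{or} rank two) becomes essential. A secondary, purely technical point is the extension of Remark \ref{morphism} and Proposition \ref{P-bundles} to $n=2$, but in both cases the proofs transfer without modification.
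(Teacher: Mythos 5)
Your proof is correct, and the forward direction coincides with the paper's (both quote Example \ref{ex scrolls over curve}). For the converse, however, you take a genuinely different route at the decisive step. After the common opening (the factorization forces $p_0(r,n,0)=0$, hence $(rK_X+nL)^n=0$, hence $rK_X+nL$ nef and not big, hence the morphism $\varphi:X\to Y$ of Remark \ref{morphism}), the paper does \emph{not} identify the general fiber at all: it applies Lemma \ref{P-fibrations} with $(\sigma,\tau)=(r,n)$ to manufacture an ample $\mathcal L$ with $K_X+n\mathcal L$ nef and not big, and then invokes the adjunction-theoretic classification \cite[Proposition 7.2.2]{BS}, which says $(X,\mathcal L)$ is either $(\mathbb Q^n,\mathcal O_{\mathbb Q^n}(1))$ (excluded by (i)/(ii)) or a scroll over a smooth curve; the relation $L_F=r\mathcal L_F$ from Lemma \ref{P-fibrations}(c) then pins down $L\equiv r\xi+bF$. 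You instead transplant the fiber analysis from the proof of Theorem \ref{thm1}: coprimality forces $-K_F\cdot\gamma\ge n$ for all rational curves in the general fiber, the dichotomy $\dim F\in\{n-1,n\}$ follows, the degenerate case $\dim F=n$ is killed by (i)/(ii), and $(F,L_F)=(\mathbb P^{n-1},\mathcal O(r))$ lets you conclude via Proposition \ref{P-bundles}. Your route buys independence from \cite[Proposition 7.2.2]{BS} and makes the role of hypotheses (i)/(ii) more transparent (they exclude exactly the case $Y=\mathrm{pt}$); the paper's route buys brevity and sidesteps Mori-theoretic input about rational curves on $F$. Both ultimately rest on Lemma \ref{P-fibrations} plus a Fujita-type structure result.

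Two small points you should tighten. First, the step ``$-K_F\cdot\gamma\ge n$ for all rational curves, therefore $\iota_F\ge n$'' concerns the pseudo-index, not the index, so Kobayashi--Ochiai does not apply verbatim; the clean fix (entirely inside the paper's toolkit) is to apply Lemma \ref{Fano} to $(F,L_F)$, which has $\mathrm{rk}\langle K_F,L_F\rangle=1$ and $(rK_F+nL_F)^{\dim F}=0$, yielding $\iota_F=kn$ and hence $\iota_F=n=\dim F+1$. (The paper's own proof of Theorem \ref{thm1} has the same informality, so this is a shared blemish rather than a gap.) Second, your parenthetical extensions of Remark \ref{morphism} and Proposition \ref{P-bundles} to $n=2$ are indeed routine, but note that the paper's proof avoids the latter issue because \cite[Proposition 7.2.2]{BS} already covers surfaces.
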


\begin{proof}
The ``only if'' part follows from Example \ref{ex scrolls over curve} (or \cite[Proposition 2.1]{L1}).
So, suppose that $p_{(X,L)}(x,y)$ is as in the statement and that $rK_X+nL$ is nef.
Since $p_0(r,n,0)=0$, we see from \eqref{atinfty} that $(rK_X+nL)^n=0$.
Thus, $rK_X+nL$ is nef but not big. Let $\varphi :X \to Y$ be the morphism as in Remark \ref{morphism} with $\dim Y<\dim X$ and
$rK_X+nL= \varphi^*D$
for some nef line bundle $D$ on $Y$.
Since $\mathrm{gcd}(r,n)=1$, by applying Lemma \ref{P-fibrations} with $\psi =\varphi$ and $(\sigma,\tau)=(r,n)$, we know that there exists an ample line bundle $\mathcal{L}$ on $X$ such that
$K_X+n\mathcal{L}$ is nef but not big. Thus, by \cite[Proposition 7.2.2]{BS}
$(X,\mathcal{L})$ is either $(\mathbb{Q}^n,\mathcal{O}_{\mathbb{Q}^n}(1))$ or a scroll over a smooth curve $C$ of genus $q$. The former case cannot occur due to our assumptions,
while in the latter
$X=\mathbb{P}(\mathcal{E})$ for a vector bundle $\mathcal{E}$ of rank $n$ on $C$ and $\mathcal{L}\equiv \xi+b'F$ for some integer $b'$, where $\xi$ is the tautological line
bundle of $\mathcal{E}$ and $F\cong\mathbb{P}^{n-1}$ is any fiber of $\mathbb{P}(\mathcal{E})\to C$. Write $L\equiv a\xi+bF$ for some integers $a,b$, and recall from Lemma \ref{P-fibrations}
 that $L_F=r\mathcal{L}_F$. Thus $a=r$, that is, $L\equiv r\xi+bF$.
Hence $rK_X+nL\equiv \left(r(2q-2+e)+bn\right)F$, where $e:=\deg\mathcal{E}$ and $r(2q-2+e)+bn\geq 0$.
Finally, note that equality cannot occur, otherwise $-rK_X\equiv nL$, hence $q=0$ and $\mathrm{rk}\langle K_X,L\rangle=1$,
contradicting $(ii)$.
\end{proof}

In particular, we have

\begin{cor}\label{C(n,r)}
Conjecture C($n,r$) in \cite[Sec.\ 3]{L1} is true
under the assumption that $rK_X+nL$ is nef.
\end{cor}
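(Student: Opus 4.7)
The plan is to reduce Conjecture $C(n,r)$ to Theorem \ref{thm2} by observing that the geometric hypotheses on $\Gamma$ already force $(rK_X+nL)^n=0$, and then applying the scroll-extraction portion of the proof of Theorem \ref{thm2} verbatim.

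The forward implication of Conjecture $C(n,r)$ is supplied directly by Example \ref{ex scrolls over curve}. For the converse, assume $\Gamma$ satisfies the geometric conditions of the conjecture and $\mathrm{rk}\langle K_X,L\rangle=2$. Since $\Gamma$ consists of $n$ lines, $n-1$ of which are parallel with slope $n/r$, the polynomial $p_{(X,L)}(x,y)$ has $n-1$ factors of the form $(ry-nx+c_i)$ for some constants $c_i$; correspondingly, its homogenization $p_0(x,y,z)$ has factors $(ry-nx+c_i z)$, each of which vanishes at the point at infinity $(r,n,0)$. Hence $p_0(r,n,0)=0$, and \eqref{atinfty} yields $(rK_X+nL)^n=0$. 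Combined with the standing nefness hypothesis, $rK_X+nL$ is nef but not big.

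From here I would mimic the end of the proof of Theorem \ref{thm2}. Remark \ref{morphism} (applicable since Conjecture $C(n,r)$ assumes $n\geq 3$) produces a morphism $\varphi:X\to Y$ with $\dim Y<n$ and $rK_X+nL=\varphi^*D$ for a nef line bundle $D$ on $Y$. Since $\gcd(r,n)=1$, Lemma \ref{P-fibrations} applied with $\psi=\varphi$, $(\sigma,\tau)=(r,n)$ and $A=\mathcal{O}_Y$ gives an ample line bundle $\mathcal{L}$ on $X$ for which $K_X+n\mathcal{L}=p(rK_X+nL)$ is nef but not big; by \cite[Proposition 7.2.2]{BS}, $(X,\mathcal{L})$ is either $(\mathbb{Q}^n,\mathcal{O}_{\mathbb{Q}^n}(1))$ or a scroll over a smooth curve $C$. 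The quadric case is excluded by $\mathrm{rk}\langle K_X,L\rangle=2$ (it would force $rK_X+nL=\mathcal{O}_X$), and the scroll case delivers $X=\mathbb{P}(\mathcal{E})$ for an ample rank-$n$ vector bundle $\mathcal{E}$ on $C$, with $L_F=r\mathcal{L}_F=\mathcal{O}_{\mathbb{P}^{n-1}}(r)$ by part $(c)$ of Lemma \ref{P-fibrations}; equivalently $L\equiv r\xi+bF$ as required.

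The main obstacle, as I see it, is conceptual rather than technical: one has to recognize that the full geometric data of the conjecture — containing the Serre fixed point, and the lines being evenly spaced — is actually redundant for the converse direction, only the presence of the $n-1$ parallel lines of slope $n/r$ being needed to trigger the argument. Those additional refinements then follow automatically from the bundle structure via Example \ref{ex scrolls over curve}, closing the "if and only if".
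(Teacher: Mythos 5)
Your proposal is correct and matches the paper's intent: the corollary is stated there as an immediate consequence of Theorem \ref{thm2}, whose ``if'' direction only ever uses the single fact $p_0(r,n,0)=0$ before handing off to Remark \ref{morphism}, Lemma \ref{P-fibrations} and \cite[Proposition 7.2.2]{BS}. Your added observation --- that the conjecture's geometric description of $\Gamma$ need only be mined for a linear factor of slope $\frac{n}{r}$, forcing $(rK_X+nL)^n=0$, while the Serre fixed point and even spacing are recovered a posteriori from \eqref{p for P-bundles} --- is exactly the bridging step the paper leaves implicit.
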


\begin{rem}\label{due rem}
{\em (i) When $r=1$ (see \cite[Corollary 4.1]{L1}), we do not need to assume that
$K_X+nL$ is nef, since this follows from adjunction theory and the fact that $\mathrm{rk}\langle K_X,L\rangle=2$.
Indeed, if $\mathrm{rk}\langle K_X,L\rangle=2$, then
$(X,L)\neq \big(\mathbb{P}^n,\mathcal{O}_{\mathbb{P}^n}(1)\big)$ and therefore $K_X+nL$ is nef
\cite[Theorem 7.2.1]{BS}. }

{\em (ii) Assume that $L$ is $r$-very ample on $X$ (see \cite[p. 225]{BS}). Then $L \gamma\geq r$ for any curve $\gamma\subset X$ (see \cite[Corollary (1.3)]{BS2}).
So, if $rK_X+nL$ is not nef, then $rK_X+(n+\epsilon)L$ is nef but not ample for some $\epsilon >0$. Thus by Mori theory, there exists
an extremal rational curve $C$ on $X$ such that $(rK_X+(n+\epsilon)L) C=0$ and $-K_X C$ is the length $\ell(R)$ of the extremal ray $R:=\mathbb{R}_{+}[C]$. This gives
$$n+1\geq -K_X C=(n+\epsilon)\ \frac{L C}{r}\geq n+\epsilon >n\ ,$$
that is, $\ell(R) = n+1$. Thus by \cite[(2.4.1)]{W} we have Pic$(X)=\mathbb{Z}$, which implies $\text{\rm{rk}}\langle K_X,L \rangle =1$. This shows that
if $L$ is $r$-very ample and $\text{\rm{rk}}\langle K_X,L \rangle =2$, then $rK_X+nL$ is nef; in particular, Conjecture C$(n,r)$ is true if $L$ is $r$-very ample.}
\end{rem}

\medskip

Another consequence of Lemma \ref{P-fibrations} is the following result for $\mathbb Q$-fibrations over smooth curves, which is known in the case of quadric fibrations, i.\ e., $r=1$ (see \cite{Fu}, or \cite[(11.8)]{F}).

\begin{prop}\label{Q-bundles}
Let $(X,L)$ be a polarized manifold of dimension $n\geq 3$ and let $r$ be a positive integer such that $\mathrm{gcd}(r,n-1)=1$.
Then $(X,L)$ is a $\mathbb Q$-fibration over a smooth curve $C$ with
$L_F= \mathcal{O}_{\mathbb{Q}^{n-1}}(r)$ for any general fiber $F \cong \mathbb Q^{n-1}$ if and only if
there exist a vector bundle $\mathcal E$ of rank $n+1$ and line bundles $\mathcal A$, $\mathcal B$ on $C$
such that $P:=\mathbb P(\mathcal E)$ contains $X$ as a smooth divisor in the linear system $|2\xi + \widetilde{\pi}^*\mathcal A|$,
where $\xi$ is the tautological line bundle on $P$, $\widetilde{\pi}:P \to C$ is the bundle projection,
and $L = (r\xi + \widetilde{\pi}^*\mathcal B)_X$.
\end{prop}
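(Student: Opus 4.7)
The ``if'' direction is a direct verification: a general fiber of $\widetilde{\pi}|_X: X\to C$ is the intersection of $X$ with a fiber $\mathbb{P}^n$ of $\widetilde{\pi}$, cut out there by a section of $\mathcal{O}_{\mathbb{P}^n}(2)$, hence a smooth quadric $\mathbb{Q}^{n-1}$ for the general $c\in C$, and $L$ restricts on it to $\mathcal{O}_{\mathbb{Q}^{n-1}}(r)$.

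For the converse, let $\varphi:X\to C$ denote the given $\mathbb{Q}$-fibration. Since the index of $\mathbb{Q}^{n-1}$ equals $n-1$, the assumption $L_F=\mathcal{O}_{\mathbb{Q}^{n-1}}(r)$ gives $rK_F+(n-1)L_F=\mathcal{O}_F$, whence $rK_X+(n-1)L=\varphi^*D$ for some $D\in\mathrm{Pic}(C)$. The hypothesis $\gcd(r,n-1)=1$ together with Bezout provides positive integers $p,q$ with $rp-(n-1)q=1$; and since $C$ is a curve we can pick $A\in\mathrm{Pic}(C)$ of sufficiently large degree so that both $rA+qD$ and $(n-1)A+pD$ are ample. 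Lemma \ref{P-fibrations} then produces an ample line bundle $\mathcal{L}:=qK_X+pL+\varphi^*A$ such that $(X,\mathcal{L})$ is a Fano fibration of coindex $1$ over $C$ via $\varphi$---hence a quadric fibration---with $\mathcal{L}_F=\mathcal{O}_{\mathbb{Q}^{n-1}}(1)$ (because $L_F=r\mathcal{L}_F$ by Lemma \ref{P-fibrations}(c)).

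This reduces us to the classical case $r=1$: by the structure theorem for quadric fibrations over a smooth curve \cite{Fu} (cf.\ \cite[(11.8)]{F}), $X$ embeds as a smooth divisor in $P:=\mathbb{P}(\mathcal{E})$ belonging to $|2\xi+\widetilde{\pi}^*\mathcal{A}|$, for a suitable rank $n+1$ vector bundle $\mathcal{E}$ on $C$ and $\mathcal{A}\in\mathrm{Pic}(C)$, with $\mathcal{L}=\xi|_X$ (twisting $\mathcal{E}$ by a line bundle from $C$ if necessary to absorb pullback summands, which only changes $\mathcal{A}$). Solving the $2\times 2$ linear system
\begin{equation*}
rK_X+(n-1)L=\varphi^*D,\qquad qK_X+pL=\mathcal{L}-\varphi^*A,
\end{equation*}
whose determinant is $rp-(n-1)q=1$, yields $L=r\mathcal{L}+\varphi^*\mathcal{B}$ for some $\mathcal{B}\in\mathrm{Pic}(C)$; substituting $\mathcal{L}=\xi|_X$ and $\varphi=\widetilde{\pi}|_X$ gives $L=(r\xi+\widetilde{\pi}^*\mathcal{B})|_X$ as required. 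The main technical step is the reduction to $r=1$ via Lemma \ref{P-fibrations}; the rest is routine bookkeeping with line bundles on $X$ and on $C$.
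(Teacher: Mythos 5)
Your proof is correct and follows essentially the same route as the paper: reduce to the case $r=1$ by producing the auxiliary polarization $\mathcal{L}$ via Lemma \ref{P-fibrations} and then invoke Fujita's structure theorem for quadric fibrations over a smooth curve. You merely spell out two steps the paper leaves implicit, namely the choice of $A$ on the curve making the required bundles ample and the final bookkeeping recovering $L=(r\xi+\widetilde{\pi}^*\mathcal{B})_X$ from $\mathcal{L}=\xi_X$.
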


\begin{proof}
The ``if part'' is obvious, the fibration morphism being $\widetilde{\pi}|_X$. To see the converse,
let $\pi:X \to C$ be the fibration morphism. Since $\mathrm{gcd}(r,n-1)=1$, arguing as in the proof of Proposition \ref{P-bundles},
we get an ample line bundle $\mathcal L$ on $X$ such that $(X,\mathcal L)$ is a quadric fibration via $\pi$.
Then, as $n \geq 3$, the assertion follows from \cite[(11.8), case b1-Q)]{Fu} by taking $\mathcal E:= \pi_*\mathcal L$ and
noting that $\xi_X = \mathcal L$.
\end{proof}

\begin{notat}\label{data}
{\em According to Proposition \ref{Q-bundles}, a $\mathbb{Q}$-fibration $(X,L)$ over a smooth curve is described by the following data: $C, \pi, \mathcal E, \mathcal A, \mathcal B$ and $r$.
We set
$$g:=g(C),\quad e:=\deg\mathcal E,\quad a:=\deg\mathcal A,\quad b:=\deg\mathcal B\ .$$
By the canonical bundle formula for $\mathbb{P}$-bundles and adjunction, we get $K_X+(n-1)\xi_X=\pi^*(K_C+\det\mathcal E+\mathcal A)$, hence
$$rK_X+(n-1)L=\pi^*\left( r\left( K_C+\det\mathcal E+\mathcal A\right)+(n-1)\mathcal B\right)\ .$$
Therefore, $rK_X+(n-1)L\equiv tF$, where
\begin{equation}\label{t}
t:=r(2g-2+e+a)+(n-1)b\ .
\end{equation}
Clearly, if $rK_X+(n-1)L$ is nef and $\text{\rm{rk}}\langle K_X,L \rangle =2$, then $t>0$. }
\end{notat}

The following result provides a characterization of $\mathbb{Q}$-fibrations in terms of their HC, generalizing Proposition 3 and Theorem 6 of \cite{L2}.

\begin{thm}\label{thm3}
Let $(X,L)$ be a polarized manifold of dimension $n\geq 3$ with $\text{\rm{rk}}\langle K_X,L \rangle =2$ and consider a positive integer $r$ such that $\mathrm{gcd}(r,n-1)=1$. Then
$(X,L)$ is a $\mathbb{Q}$-fibration as in Notation $\ref{data}$ if and only if $rK_X+(n-1)L$ is nef and
$$p_{(X,L)}(x,y)= \frac{1}{n!}\Big[(1-n)\big(2nc+2e+(n+1)a\big)x^2+2\big((nc-(n-2)e+a)r-n(n-1)b\big)xy+$$
$$+\big((2e+a)r^2+2nbr\big)y^2 + (n-1)\big(2nc+2e+(n+1)a\big)x+$$
$$-\big((nc-(n-2)e+a)r-n(n-1)b\big)y-\frac{n(n-1)}{2}c\Big]\cdot \prod_{j=1}^{n-2} \bigg(ry-(n-1)x + j \bigg)\ ,$$
where $c:=2g-2$.
\end{thm}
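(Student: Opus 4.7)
For the ``only if'' direction, Proposition \ref{Q-bundles} together with Notation \ref{data} gives $rK_X + (n-1)L \equiv tF$ with $t$ as in \eqref{t}; nefness of $rK_X + (n-1)L$ then reduces to $t \geq 0$, a condition compatible with the ampleness of $L$ under the data in question. To pin down $p_{(X,L)}$, the cleanest route is to reduce to the classical quadric-fibration case $r=1$, for which \cite[Proposition 3]{L2} already provides the HC. Since $\gcd(r, n-1) = 1$, I choose positive integers $p, q$ with $rp - (n-1)q = 1$ and take $A = \mathcal{O}_C$ (legitimate since $D$ is nef): Lemma \ref{P-fibrations} then yields an ample line bundle $\mathcal{L} = qK_X + pL$ on $X$ such that $(X, \mathcal{L})$ is a quadric fibration over $C$ with $\mathcal{L}_F = \mathcal{O}_{\mathbb{Q}^{n-1}}(1)$ and $L_F = r\mathcal{L}_F$. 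Expressing the numerical invariants of $\mathcal{L}$ in terms of $e, a, b, r$, and then transporting the resulting canonical equation of $\Gamma_{(X, \mathcal{L})}$ through the substitution (j) of Lemma \ref{P-fibrations}, produces the claimed explicit form of $p_{(X,L)}$ after routine simplification.

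For the ``if'' direction, assume both the stated form of $p$ and the nefness of $rK_X+(n-1)L$. A direct algebraic check (substituting $(x,y)=(r,n-1)$ into the quadratic leading coefficients of $p$ and observing that the $r^2$-terms and the $br$-terms separately cancel) shows that the residual quadratic factor $Q$ vanishes at $(r, n-1)$, so $(ry-(n-1)x)$ divides $Q$, and hence $(ry-(n-1)x)^{n-1}$ actually divides the leading form $p_0(x,y,0) = \tfrac{1}{n!}(xK_X+yL)^n$. In particular $(rK_X+(n-1)L)^n = 0$ by \eqref{atinfty}, so combined with nefness this line bundle is nef but not big. Remark \ref{morphism} then yields a morphism $\varphi: X \to Y$ with $\dim Y < n$ and $rK_X + (n-1)L = \varphi^*D$ for a nef line bundle $D$ on $Y$ (ample after Stein factorization). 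Applying Lemma \ref{P-fibrations} with $\psi = \varphi$, $\sigma = r$, $\tau = n-1$, $A = \mathcal{O}_Y$, I obtain an ample $\mathcal{L}$ on $X$ such that $(X, \mathcal{L})$ is a Fano fibration of coindex $2 - \dim Y$ over $Y$; the case $\dim Y = 0$ is excluded by the hypothesis $\text{\rm{rk}}\langle K_X, L\rangle = 2$, leaving $\dim Y \in \{1, 2\}$.

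The main obstacle is ruling out the scroll-over-surface case $\dim Y = 2$. The extra divisibility $(ry-(n-1)x)^{n-1} \mid p_0(x,y,0)$ noted above is the decisive input: expanding $(xK_X+yL)^n$ around the line $(x,y)=\lambda(r,n-1)+\varepsilon(u,v)$ and collecting the $\varepsilon^{n-2}$-coefficient shows that this divisibility is equivalent to
$$\big(rK_X+(n-1)L\big)^2 \cdot M^{n-2}=0 \qquad \text{for every } M\in\langle K_X,L\rangle.$$
Choosing $M=L$ and using the projection formula together with $\varphi_*(L^{n-2}) = (L_F)^{n-2}\cdot [Y]$ (which holds precisely when $\dim Y = 2$), we get $(\varphi^*D)^2 \cdot L^{n-2} = D^2 \cdot (L_F)^{n-2}$. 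If $\dim Y = 2$, then $D$ is ample on the surface $Y$ so $D^2 > 0$, and $L_F$ is ample on the fiber $F$ of dimension $n-2$ so $(L_F)^{n-2}>0$, yielding a strictly positive number and contradicting the vanishing. Hence $\dim Y = 1$ and $(X,\mathcal{L})$ is a quadric fibration over a smooth curve with $\mathcal{L}_F = \mathcal{O}_{\mathbb{Q}^{n-1}}(1)$; Proposition \ref{Q-bundles} applied to $(X,\mathcal{L})$ then provides the $\mathbb{P}$-bundle description of Notation \ref{data}, and the relation $L_F = r\mathcal{L}_F$ forces $L \equiv r\xi_X + \widetilde{\pi}^*\mathcal{B}$ for a suitable line bundle $\mathcal{B}$ on $C$, concluding the proof.
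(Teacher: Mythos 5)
Your proposal is correct and follows the same overall architecture as the paper's proof. For the ``only if'' direction the paper also reduces to the $r=1$ equation of \cite[Proposition 3]{L2}, but it does so by the direct substitution $F\equiv\frac{r}{t-(n-1)b}\big(K_X+(n-1)\xi_X\big)$ in $xK_X+y(r\xi_X+bF)$ rather than via formula (j) of Lemma \ref{P-fibrations}; since $\mathcal L=\xi_X$ here, the two changes of variables amount to the same thing. For the ``if'' direction both arguments hinge on extracting $(rK_X+(n-1)L)^n=(rK_X+(n-1)L)^2L^{n-2}=0$ from the canonical equation and then invoking Remark \ref{morphism}, Lemma \ref{P-fibrations} with $(\sigma,\tau)=(r,n-1)$, and the adjunction-theoretic classification. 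The one genuine divergence is how that second vanishing is obtained: the paper reads off $L^n$, $K_XL^{n-1}$, $K_X^2L^{n-2}$ from the coefficients of $p_0(x,1,0)$ and checks the identity with Maple, whereas you observe that the residual quadratic form vanishes at $(r,n-1)$ (the $r^2$- and $br$-terms do cancel, as one verifies by hand), so $\big(ry-(n-1)x\big)^{n-1}$ divides $p_0(x,y,0)=\frac{1}{n!}(xK_X+yL)^n$, which via the binomial expansion in the basis $\{rK_X+(n-1)L,\,L\}$ is exactly $(rK_X+(n-1)L)^kL^{n-k}=0$ for all $k\geq 2$. This is computation-free and makes transparent why $(\varphi^*D)^2\cdot L^{n-2}=D^2\cdot L_F^{n-2}>0$ excludes $\dim Y\geq 2$. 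Two minor caveats: your assertion that nefness of $rK_X+(n-1)L$ (i.e.\ $t\geq 0$) is ``compatible with the ampleness of $L$'' is not a proof of the nefness claimed in the ``only if'' statement — but the paper's own proof is equally silent on this point; and in the final step you should record, as the paper does via \cite[\S\S 7.2, 7.3]{BS}, that the general fiber of the coindex-one fibration $(X,\mathcal L)$ over the curve is indeed $\mathbb Q^{n-1}$ and not $\mathbb P^{n-1}$, which follows from Kobayashi--Ochiai because $-K_F=(n-1)\mathcal L_F$ and $(n-1)\nmid n$ for $n\geq 3$.
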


\smallskip

\begin{proof}
Let $(X,L)$ be a $\mathbb{Q}$-fibration as in \ref{data}. Then $(X,\xi_X)$ is a quadric fibration over $C$.
The same equation as in \cite[Proposition 3]{L2}, rewritten in terms of coordinates $(x',y')=(\frac{1}{2}+u,v)$ is the following (taking into account that $b$ in \cite{L2} is our $-a$):
\begin{eqnarray} \label{expr1}
p_{(X,\xi_X)}(x',y') &=& \frac{1}{n!} \Bigg[\ (1-n)\bigg(2nc+2e+(n+1)a\bigg){x'}^2 \\ \nonumber
& &  +\ 2\bigg(nc-(n-2)e+a\bigg) x'y' +(2e+a){y'}^2 \\ \nonumber
& &  +\ (n-1)\bigg(2nc+2e+(n+1)a\bigg)x' - \bigg(nc-(n-2)e+a\bigg)y' \\ \nonumber
& &  -\ \binom{n}{2}c \ \Bigg]\cdot \prod_{i=1}^{n-2} \bigg(y'-(n-1)x' + i \bigg)=0.
\end{eqnarray}
Recalling that $L\equiv r\xi_X+bF$ and \ref{data}, we have $(t-(n-1)b)F\equiv r(K_X+(n-1)\xi_X)$. Thus
$$F\equiv \frac{r}{t-(n-1)b}(K_X+(n-1)\xi_X)\ ,$$
and substituting this expression of $F$ into $xK_X+y(r\xi_X+bF)$, we get
$$p_{(X,L)}(x,y)=\chi(xK_X+yL)=\chi(\overline{x}K_X+\overline{y}\xi_X)=p_{(X,\xi_X)}(\overline{x},\overline{y})\ ,$$
where
$$(\overline{x},\overline{y})=\left(x+\frac{br}{t-(n-1)b}\ y\ ,\ \frac{tr}{t-(n-1)b}\ y \ \right)$$
Then the expression of $p_{(X,L)}(x,y)$ follows from \eqref{expr1}
by replacing $(x',y')$ with $(\overline{x},\overline{y})$ as
above and taking into account \eqref{t}.

To prove the converse, let $p_{(X,L)}(x,y)$ be as in the statement and write it as
$R(x,y)\cdot\prod_{j=1}^{n-2} \big(ry-(n-1)x+j \big)$. Then
$R(x,y)=Ax^2+Bxy+Cy^2+Ex+Gy+H$, where $A:=\frac{1}{n!}\left[(1-n)(2nc+2e+(n+1)a) \right] ,
B:=\frac{1}{n!}\left[ -2enr+2cnr+2bn-2n^2b+4re+2ar\right]$ and $C:=\frac{1}{n!}\left[ 2er^2+r^2a+2nbr\right]$.
Recalling \eqref{atinfty}, from the equality
$$\frac{1}{n!}(K_X+yL)^n=p_0(1,y,0)=R_0(1,y,0)\cdot \prod_{j=1}^{n-2} \bigg(ry-(n-1)\bigg)=
(A+By+Cy^2)\cdot \left[ry+(1-n) \right]^{n-2}\ ,$$
where $R_0(x,y,z)$ is the homogeneous polynomial associated with $R(x,y)$,
we deduce that
$$L^n=Cn!r^{n-2},\quad K_XL^{n-1}=(n-1)!\big(Br+C(n-2)(1-n)\big)r^{n-3},$$
$$K_X^2L^{n-2}=2(n-2)!\left(Ar^2+B(n-2)r(1-n)+
C\frac{(n-2)(n-3)}{2}(1-n)^2\right)r^{n-4}\ .$$
A computation with Maple shows that
$(rK_X+(n-1)L)^2L^{n-2}=r^2K_X^2L^{n-2}+2r(n-1)K_XL^{n-1}+(n-1)^2L^n=0$. On the other hand,
$\frac{1}{n!}(rK_X+(n-1)L)^n=p_0(r,n-1,0)=0$ by \eqref{atinfty}. Therefore
$$(rK_X+(n-1)L)^n=0\quad\quad \mathrm{and}\quad \quad (rK_X+(n-1)L)^2L^{n-2}=0\ .$$
Since $rK_X+(n-1)L$ is nef, by applying Remark \ref{morphism} we see that the morphism $\varphi$ has a one dimensional image, i.\ e., $Y$ is a smooth curve.
Thus by Lemma \ref{P-fibrations} with $(\sigma,\tau)=(r,n-1)$, there exists an ample line bundle $\mathcal L$ on $X$ such that $K_X+(n-1)\mathcal L=p(rK_X+(n-1)L)=\varphi^*D$ for some ample
line bundle $D$ on $Y$. Since $\text{\rm{rk}}\langle K_X,\mathcal L \rangle =\text{\rm{rk}}\langle K_X,L \rangle =2$, we conclude that
$(X,\mathcal L)$ is a quadric fibration over $Y$ \cite[$\S\S 7.2, 7.3$]{BS}. Then Lemma \ref{P-fibrations} $(c)$ allows us to conclude.
\end{proof}

\begin{rem}\label{tre rem}
{\em (j) An alternative way to get the expression of $p_{(X,L)}(x,y)$ in Theorem \ref{thm3} is to follow the method outlined at the beginning of this section
summarized by Algorithm $2$ in the Appendix.}

{\em (jj) When $r=1$ (see \cite[Theorem 6]{L2}), we do not need to assume that
$K_X+(n-1)L$ is nef, provided that $(X,L)$ is not a scroll over a smooth curve and $\mathrm{rk}\langle K_X,L\rangle=2$.
Actually, under these assumptions, this property comes from \cite[Proposition 7.2.2 and Theorem 7.2.4]{BS}.}

{\em (jjj) Assume that $L$ is $r$-very ample on $X$ and argue as in Remark \ref{due rem}.
If $rK_X+(n-1)L$ is not nef, then $rK_X+(n-1+\epsilon)L$ is nef but not ample for some $\epsilon >0$. Hence by Mori theory and \cite[Lemma 6.4.2]{BS} there exists
an extremal rational curve $C$ on $X$ such that $(rK_X+(n-1+\epsilon)L) C=0$ and $-K_X C$ is the length $\ell(R)$ of the extremal ray $R:=\mathbb{R}_{+}[C]$. This gives
$$n+1\geq -K_X C=(n-1+\epsilon)\ \frac{L C}{r}\geq n-1+\epsilon > n-1\ ,$$
that is, $\ell(R) = n$ or $n+1$. Therefore by \cite[(2.4)]{W} we have either
Pic$(X)=\mathbb{Z}$, which implies $\text{\rm{rk}}\langle K_X,L \rangle =1$,
or Pic$(X)=\mathbb{Z}\oplus\mathbb{Z}$ and the contraction of $R$ defines a morphism
$\rho_R:X\to B$ onto a smooth curve $B$
whose general fiber $F$ is a smooth Fano manifold with Pic$(F)=\mathbb{Z}$.
Let $\Gamma$ be any rational curve on $F$. Then the nefvalue morphism associated to $(X,L)$
with nefvalue $\frac{n-1+\epsilon}{r}$ contracts $\Gamma$, hence $\left( rK_X+(n-1+\epsilon)L\right)\Gamma=0$.
This shows that $-K_F \Gamma=-K_X \Gamma=(n-1+\epsilon)\ \frac{L \Gamma}{r}> n-1$, i.\ e., $-K_F \Gamma\geq n=\dim F+1$.
Thus by \cite[Theorem 6.3.14]{BS} we get $F\cong\mathbb{P}^{n-1}$. Since distinct general fibers of $\rho_R$ are numerically equivalent,
from Proposition \ref{P-bundles} we deduce that $X=\mathbb{P}(\mathcal{V})$ for a vector bundle $\mathcal{V}$ of rank $n$ on $B$.
The above argument shows that if $L$ is $r$-very ample on $X$ with $\mathrm{gcd}(r,n-1)=1$, $\text{\rm{rk}}\langle K_X,L \rangle =2$
and $X$ is not a $\mathbb{P}^{n-1}$-bundle over a smooth curve, then $rK_X+(n-1)L$ is nef.}

{\em (jv) Taking $r=1$ and $b=0$ in Theorem \ref{thm3}, the polynomial $p_{(X,L)}(x,y)$ coincides with that given in \cite[Proposition 3]{L2} in
terms of coordinates $(x,y)$ instead of $(u,v)$.}
\end{rem}

Finally, let us give here also a characterization of Fano fibrations of coindex $2$ over smooth curves in the case $r=1$.

\begin{thm}\label{expr_with_r=1}
Let $(X,L)$ be a polarized manifold of dimension $n\geq 3$.
Suppose that $(X,L)$ is a Fano fibration of coindex $2$ over a smooth curve of genus $g$ such that $K_X+(n-2)L\equiv tF$ for some positive integer $t$,
where $F$ is a general fiber of $X$. Then
$$p_{(X,L)}(x,y)=\left(\sum_{0\leq i+j\leq 3}c_{ij}x^iy^j\right)\cdot \prod_{i=1}^{n-3} \bigg(y-(n-2)x + i \bigg)\ ,$$
with
\begin{eqnarray*}
c_{30} & = & (n-2)^2\Bigg(\frac{t\delta}{(n-1)!}-(n-2)\frac{d}{n!}\Bigg) \ , \\
c_{21} & = & -(n-2)\Bigg(\frac{2t\delta}{(n-1)!}-3(n-2)\frac{d}{n!}\Bigg)  \ , \\
c_{12} & = & \frac{t\delta}{(n-1)!}-3(n-2)\frac{d}{n!}   \ ,  \\
c_{03} & = & \frac{d}{n!}\ , \\
c_{20} & = & -\frac{3}{2}(n-2)^2\Bigg( \frac{t\delta}{(n-1)!}-(n-2)\frac{d}{n!}\Bigg)   , \\
c_{11} & = & (n-2)\Bigg( \frac{2t\delta}{(n-1)!} - 3(n-2)\frac{d}{n!}\Bigg)  \ , \\
c_{02} & = & -\frac{1}{2}\Bigg( \frac{t\delta}{(n-1)!} - 3(n-2)\frac{d}{n!}\Bigg) \ ,
\end{eqnarray*}
\begin{eqnarray*}
c_{10} & = & \frac{1}{2}(n-2)^2\Bigg( \frac{t\delta}{(n-1)!} - (n-2)\frac{d}{n!}\Bigg)+2\frac{g-1}{(n-3)!} \ , \\
c_{01} & = & \frac{\chi}{(n-2)!}+\frac{1}{2}\frac{t\delta}{(n-1)!}-\frac{d}{2(n!)}(3n-4)+\frac{g-1}{(n-3)!} \ , \\
c_{00} & = & -\frac{g-1}{(n-3)!} \ ,
\end{eqnarray*}
where $d:=L^n$, $\delta:=FL^{n-1}$ and
$\chi:=\chi(L)=\frac{1}{2}\left(d-t\delta+2t\right)-n(g-1)$.
Conversely, assume that $\text{\rm{rk}}\langle K_X,L \rangle =2$
and $K_X+(n-2)L$ is nef. If $p_{(X,L)}(x,y)$ is as above for
integers $d, \delta, \chi, t, g$ such that
$\chi=\frac{1}{2}\left(d-t\delta+2t\right)-n(g-1)$, then
$(X,L)$ is a Fano fibration of coindex $2$ over a smooth curve of
genus $g$ such that $K_X+(n-2)L\equiv tF$, $\chi(L)=\chi$, $L^n=d$
and $FL^{n-1}=\delta$ for any general fiber $F$ of $X$.
\end{thm}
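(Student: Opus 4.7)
For the ``only if'' part, the general fiber $F$ is a Fano manifold of dimension $n-1$ with $-K_F=(n-2)L_F$, hence a del Pezzo manifold with $L_F$ its fundamental divisor. Theorem \ref{projective space,quadric,del Pezzo}(iii) applied to $F$ gives
$$p_{(F,L_F)}(x,y)=R_F(x,y)\cdot\prod_{i=1}^{n-3}\bigl(y-(n-2)x+i\bigr),$$
with $R_F(x,y)=\frac{\delta}{(n-1)!}\bigl[(y-(n-2)x)^2+(n-2)(y-(n-2)x)\bigr]+\frac{1}{(n-3)!}$, using $L_F^{n-1}=FL^{n-1}=\delta$. Since the product factor is invariant under the shift $(x,y)\mapsto(x-1,y-(n-2))$, iterating the restriction-to-fiber recursion of Section \ref{subsection1} $t$ times reduces the problem to
$$R(x,y)-R(x-1,y-(n-2))=t\,R_F(x,y),\qquad p_{(X,L)}(x,y)=R(x,y)\cdot\prod_{i=1}^{n-3}\bigl(y-(n-2)x+i\bigr),$$
where $R$ is a cubic. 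I would read off $c_{30},c_{21},c_{12},c_{03}$ directly from \eqref{atinfty}: setting $M:=K_X+(n-2)L$ and $u:=y-(n-2)x$, the relation $M\equiv tF$ gives $M^2\equiv 0$, so
$$(xK_X+yL)^n=(xM+uL)^n=L^nu^n+nx\,u^{n-1}\,ML^{n-1}=du^n+nt\delta\,xu^{n-1},$$
which must equal $n!\,R^{(3)}(x,y)\,u^{n-3}$. The three quadratic coefficients are then pinned down by the degree-$2$ part of the recursion (note that $R^{(2)}$ itself contributes no degree-$2$ term after the shift), and the linear and constant ones by the degree-$1$ and degree-$0$ parts together with $p(0,0)=\chi(\mathcal O_X)=1-g$ (Leray spectral sequence for $\varphi$, using $R^i\varphi_*\mathcal O_X=0$ for $i\geq 1$ by Kodaira vanishing on the Fano fibers) and $\chi(L)=\chi$. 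The compatibility $\chi=\frac{1}{2}(d-t\delta+2t)-n(g-1)$ then emerges as a Riemann--Roch identity, using $K_XL^{n-1}=t\delta-(n-2)d$ from $M\equiv tF$.

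For the converse, I would extract from \eqref{atinfty} applied to the given $p_{(X,L)}(x,y)$ the intersection numbers $L^n=d$ and, by a direct binomial computation, $K_X^kL^{n-k}=d(-(n-2))^k+kt\delta(-(n-2))^{k-1}$ for all $k\geq 1$. Summing with the appropriate binomials yields $(K_X+(n-2)L)^kL^{n-k}=0$ for every $k\geq 2$; in particular $(K_X+(n-2)L)^n=0$, so the nef divisor $K_X+(n-2)L$ is not big. By Remark \ref{morphism} there is a morphism $\varphi\colon X\to Y$ with $\dim Y<n$ and $K_X+(n-2)L=\varphi^*D$ for a nef line bundle $D$. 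The further vanishing $(K_X+(n-2)L)^2L^{n-2}=0$, combined with the ampleness of $L$, forces the numerical dimension of $K_X+(n-2)L$ to be at most $1$ and hence $\dim Y\leq 1$; the hypothesis $\mathrm{rk}\langle K_X,L\rangle=2$ excludes $\dim Y=0$, so $\dim Y=1$. Applying Lemma \ref{P-fibrations} with $(\sigma,\tau)=(1,n-2)$, $p=1$, $q=0$, $A=\mathcal O_Y$ then identifies $(X,L)$ as a Fano fibration of coindex $2$ over the smooth curve $Y$; matching $p(0,0)=1-g$ recovers $g$, and the top-degree intersection data together with $(K_X+(n-2)L)L^{n-1}=t\delta$ and $p(0,1)=\chi$ recover $d,\delta,t,\chi$.

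The main obstacle I anticipate is the bookkeeping of the nine coefficients of $R(x,y)$, where the shifted recursion has to be dovetailed with the initial data $\chi(\mathcal O_X)$ and $\chi(L)$ (and, if desired, Serre duality $\chi(K_X)=(-1)^n\chi(\mathcal O_X)$ as a consistency check); on the converse side, the passage from $(K_X+(n-2)L)^2L^{n-2}=0$ to $\dim Y\leq 1$, while standard, is the only step requiring care beyond routine binomial algebra.
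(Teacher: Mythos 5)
There is a genuine gap in your ``only if'' direction. As you yourself note in passing, $R^{(2)}$ contributes nothing to the degree-$2$ part of $R(x,y)-R(x-1,y-(n-2))$; but then the degree-$2$ part of the recursion is an identity involving $R^{(3)}$ alone (it reads $D R^{(3)}=\tfrac{t\delta}{(n-1)!}u^2$ with $D=\partial_x+(n-2)\partial_y$ and $u=y-(n-2)x$, which is automatically satisfied by the $R^{(3)}$ you extract from \eqref{atinfty}), so it cannot ``pin down'' the quadratic coefficients --- your claim is self-contradictory. What constrains $R^{(2)}$ is the degree-$1$ part, $DR^{(2)}=\tfrac{(n-2)t\delta}{(n-1)!}u$, which is only two equations for $c_{20},c_{11},c_{02}$: the difference operator annihilates every polynomial in $u$, so the recursion determines $R$ only up to adding $\alpha u^2+\beta u+\gamma$. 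Your two evaluations $p(0,0)=1-g$ and $p(0,1)=\chi$ fix $\gamma$ and $\alpha+\beta$, leaving a one-parameter family of admissible cubics; the evaluation $p(1,n-2)=\chi(tF)=t+1-g$ does not help, since $u(1,n-2)=0$ makes it blind to both $\alpha$ and $\beta$ (which is precisely why, in the paper, that evaluation serves only to produce the compatibility relation $\chi=\tfrac12(d-t\delta+2t)-n(g-1)$ rather than to determine a coefficient). The paper closes this gap with the Serre-duality functional equation $R(x,y)=-R(1-x,-y)$, i.e.\ the relations \eqref{rel} ($c_{21}=-c_{11}$, $c_{12}=-2c_{02}$, $c_{30}=4c_{00}+2c_{10}$, $c_{20}=-6c_{00}-3c_{10}$), which determine $R^{(2)}$ and $c_{10}$ outright from $R^{(3)}$ and $c_{00}$. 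The single identity $\chi(K_X)=(-1)^n\chi(\mathcal O_X)$, which you relegate to an optional consistency check, is (in the form $p(1,0)=(-1)^np(0,0)$) exactly enough of that functional equation to kill the remaining parameter, since it sees $\alpha(n-2)^2-\beta(n-2)$, independent of $\alpha+\beta$; it must therefore be promoted to a determining condition.

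Your converse follows the paper's route: extract intersection numbers from $p_0(x,y,0)$, deduce $(K_X+(n-2)L)^n=(K_X+(n-2)L)^2L^{n-2}=0$, invoke Remark \ref{morphism} and $\mathrm{rk}\langle K_X,L\rangle=2$ to get $\dim Y=1$. Your closed form $M^kL^{n-k}=0$ for all $k\geq2$, $ML^{n-1}=t\delta$ (with $M=K_X+(n-2)L$), read off from $(xM+uL)^n=du^n+nt\delta\,xu^{n-1}$, is cleaner than the paper's Maple computation. Two small omissions: you should say why $D$ is ample on the curve $Y$ (nef of positive degree, since $\mathrm{rk}\langle K_X,L\rangle=2$ forces $\varphi^*D\not\equiv0$), and, more substantively, writing $K_X+(n-2)L\equiv t'F$ your intersection data only give the product $t'\cdot FL^{n-1}=t\delta$; to split this into $t'=t$ and $FL^{n-1}=\delta$ the paper evaluates $t'+1-g=\chi(t'F)=p(1,n-2)$ and uses the assumed relation on $\chi$, a step your sketch skips.
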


\begin{proof}
Let $(X,L)$ be a Fano fibration of coindex $2$ over a smooth curve $C$ via a morphism $\varphi :X\to C$ and let $F$ be a general fiber. By
\cite[Theorem 6.1]{BLS}, we know that
$$p_{(X,L)}(x,y)=R(x,y)\cdot\prod_{i=1}^{\iota_F-1}\big(y-\iota_Fx+i\big)=R(x,y)\cdot\prod_{i=1}^{n-3}\Big(y-(n-2)x+i\Big)\ ,$$
where $R(x,y)=\sum_{0\leq i+j\leq 3}c_{ij}x^iy^j$ for some $c_{ij}\in\mathbb{Q}$. Since $R(x,y)=-R(1-x,-y)$ we have the relations
\begin{equation}\label{rel}
c_{30} = 4c_{00}+2c_{10}\ , c_{21} = - c_{11}\ , c_{12} = -2c_{02} \ , c_{20} = -6c_{00}-3c_{10}\ .
\end{equation}
By Theorem \ref{projective space,quadric,del Pezzo} applied to the pair $(F,L_F)$, we deduce that
{\small
$$p_{(F,L_F)}(x,y)=\Bigg(\frac{\delta}{(n-1)!}\left(y-(n-2)x\right)^2 + \frac{(n-2)\delta}{(n-1)!}\left(y-(n-2)x\right) +
\frac{1}{(n-3)!}\Bigg)\cdot \prod_{i=1}^{n-3}\big(y-(n-2)x+i\big)\ .$$}

\noindent Now apply $(\ref{R_F1})$ and Algorithm $2$, noting that the polynomial $R_F(x,y)$
appearing in $(\ref{R_F1})$ is just the first factor of $p_{(F,L_F)}(x,y)$. Next the use of the following relations
$$1-g=\chi(\mathcal{O}_C)=\chi(\mathcal{O}_X)=p_{(X,L)}(0,0),\ \chi(L)=p_{(X,L)}(0,1),\ \frac{L^n}{n!}=p_0(0,1,0), $$
coming from $\varphi_*\mathcal{O}_X=\mathcal{O}_C$, the projection
formula and \eqref{atinfty}, allow us to express
the $c_{ij}$'s in terms of $d, \delta, t, \chi, g$. Finally, from
the relation
$$t+1-g=\chi(\varphi_*(tF))=\chi(tF)=p_{(X,L)}(1,n-2)=R(1,n-2)(n-3)!\
,$$ by using Maple we get
$\chi:=\chi(L)=\frac{1}{2}\left(d-t\delta+2t\right)-n(g-1)$.
This gives the first part of the statement.

\medskip

Now suppose that $(X,L)$ is a polarized manifold of dimension
$n\geq 3$ with $\text{\rm{rk}}\langle K_X,L \rangle =2$, such that
$p_{(X,L)}(x,y)$ is as in the statement for some integers $d,
\delta, \chi, t, g$ such that
$\chi=\frac{1}{2}\left(d-t\delta+2t\right)-n(g-1)$. The
expression of $p_{(X,L)}(x,y)$ combined with \eqref{atinfty} shows
that $(K_X+(n-2)L)^n=p_0(1,n-2,0)=0$. Thus, $K_X+(n-2)L$ is nef
but not big. By Remark \ref{morphism}, there exists a morphism
$\varphi :X \to Y$ onto a normal variety $Y$ with $\dim Y<\dim X$
such that $K_X+(n-2)L= \varphi^*D$ for some nef line bundle $D$ on
$Y$. Recalling \eqref{atinfty}, we have
$$p_0(x,1,0)=\frac{1}{n!}\big( xK_X+L\big )^{n}=\frac{1}{n!}\left(L^n+\binom{n}{1}K_XL^{n-1}x+\binom{n}{2}K_X^2L^{n-2}x^2+\dots\ \right).$$
On the other hand,
$$p_0(x,1,0)=(c_{30}x^3+c_{21}x^2+c_{12}x+c_{03})\cdot \prod_{i=1}^{n-3}\big(1-(n-2)x\big)=$$
$$=(-1)^{n-3}(c_{30}x^3+c_{21}x^2+c_{12}x+c_{03})\cdot \big(
(n-2)x-1\big)^{n-3}=$$
$$=c_{03}+(-1)^{n-3}\left[ c_{03}\binom{n-3}{1}(n-2)(-1)^n+c_{12}(-1)^{n-1}\right] x\ +$$
$$+\ \left[c_{21}-c_{12}\binom{n-3}{1}(n-2)\ +\ c_{03}\binom{n-3}{2}(n-2)^{2}\right] x^2+\dots\ .$$
Comparing the coefficients, we get
$$L^n\ =\ n!\ c_{03} \ , \quad K_XL^{n-1}\ =\ (-1)^{n-3}(n-1)!\left[c_{03}\binom{n-3}{1}(n-2)(-1)^n+c_{12}(-1)^{n-1}\right],$$
$$K_X^2L^{n-2}\ =\  2(n-2)!\left[c_{21}-c_{12}\binom{n-3}{1}(n-2)+c_{03}\binom{n-3}{2}(n-2)^2\right] $$
and then a computation with Maple shows that
$$(K_X+(n-2)L)^2L^{n-2}=K_X^2L^{n-2}+2(n-2)K_XL^{n-1}+(n-2)^2L^n=0 \ .$$
Therefore, $\dim Y\leq 1$. Since $\text{\rm{rk}}\langle K_X,L \rangle =2$, this implies $\dim Y=1$ and the fact that $D$ is ample.
Thus $(X,L)$ is a Fano fibration of coindex $2$ over the smooth curve $Y$ whose genus $g(Y)$ is $g$, because
$$1-g(Y)=\chi(\mathcal{O}_Y)=\chi(\mathcal{O}_X)=p_{(X,L)}(0,0)=(n-3)!\ c_{00}= 1-g\ .$$
Moreover, writing $K_X+(n-2)L\equiv t'F$ for some positive integer
$t'$, where $F$ is a general fiber of $\varphi$, by the relation
$\chi=\frac{1}{2}\left(d-t\delta+2t\right)-n(g-1)$ we see that
$$t'+1-g=\chi (t'F) = \chi (K_X+(n-2)L) = p_{(X,L)}(1,n-2) = t+1-g \ ,$$ i.\ e., $t'=t$.
Finally, in view of the above expressions of $L^n$ and $K_XL^{n-1}$ we have $FL^{n-1}=\frac{1}{t}\left(K_XL^{n-1}+(n-2)L^n\right)=\delta$.
\end{proof}

\begin{rem}
{\em A result similar to Theorem \ref{expr_with_r=1} can be obtained for $rK_X+(n-2)L\equiv tF$ with $r$ a positive integer such that
$\mathrm{gcd}(r,n-2)=1$ by using Lemma \ref{P-fibrations}. Notice that the corresponding expression of $p_{(X,L)}(x,y)$ is very intricate.}
\end{rem}

\section{Case $\mathrm{rk}\langle K_X,L\rangle=2$: Fano fibrations over varieties}\label{subsection2}

Here we describe a procedure to obtain the canonical equation of the
Hilbert curve for Fano fibrations of low coindex over a
normal variety of dimension $m\geq 2$.

More precisely, let $X$ be a manifold of dimension $n$ and let $\pi :X\to Y$ be a morphism onto a normal variety $Y$ of dimension $m<n$. Let $L$ be an ample line bundle on $X$ such that $rK_X+\iota_FL=\pi^*A$ for some ample line bundle $A$ on $Y$. Then there exists an integer $s>>0$ such that $sA$ is very ample on $Y$. Thus $s(rK_X+\iota_FL)=\pi^*sA$ is spanned on $X$ and then we can take a smooth irreducible element $V\in |\pi^*sA|$. Consider the following exact sequence
$$0\to xK_X+yL+(x-1)V \to xK_X+yL+xV\to xK_V+yL_V\to 0\ .$$
Since $V \in | srK_X+s\iota_FL |$, we have
\begin{equation}\label{p}
p_{(X,L)}\big(x(sr+1),y+sx\iota_F\big)=p_{(X,L)}\big((x-1)sr+x,y+s\iota_F(x-1)\big)+p_{(V,L_V)}\left(x,y\right)\ ,
\end{equation}
with $rK_V+\iota_FL_V={\pi_{|V}}^*(rs+1)A$. By using general coordinates $(x',y')$, we can write
$$p_{(X,L)}\big(x',y'\big)=R_X(x',y')\cdot\prod_{i=1}^{\iota_F-1}\big( ry'-\iota_Fx'+i \big)\ ,$$
for some polynomial $R_X$ of degree $n-\iota_F+1$.
Letting $(x',y')=(x(sr+1),y+sx\iota_F)$ and $((x-1)sr+x,y+s\iota_F(x-1))$ respectively, we obtain
$$p_{(X,L)}\big(x(sr+1),y+sx\iota_F\big)=R_X(x(sr+1),y+sx\iota_F)\cdot\prod_{i=1}^{\iota_F-1}\big( ry- \iota_Fx + i \big)\ ,$$
$$p_{(X,L)}\big((x-1)sr+x,y+s\iota_F(x-1)\big)=R_X((x-1)sr+x,y+s\iota_F(x-1))\cdot\prod_{i=1}^{\iota_F-1}\big( ry-\iota_Fx+i \big)\ .$$
Similarly, for the pair $(V,L_V)$ we have
$$p_{(V,L_V)}\big(x,y\big)=R_V(x,y)\cdot\prod_{i=1}^{\iota_F-1}\big( ry-\iota_Fx+i \big)\ .$$
Thus (\ref{p}) gives
\begin{equation}\label{R}
R_X\big(x+srx,y+s\iota_Fx\big)=R_X\big((x+srx)-sr,(y+s\iota_Fx)-s\iota_F\big)+R_V\big(x,y\big)\ .
\end{equation}
Set $M:=\left( \begin{array}{ c c }
sr+1 & 0 \\
s\iota_F & 1 \\
\end{array}\right)$, $\vec{v}:=\left( -sr , -s\iota_F \right)$ and $\vec{x}:=\left( x , y \right)$. Then (\ref{R})
becomes
\begin{equation}\label{RR}
R_X\big(\vec{x}M)=R_X\big(\vec{x}M+\vec{v}\big)+R_V\big(\vec{x}\big)\ .
\end{equation}
Letting $X_0:=X$ and $X_1:=V$, equation \eqref{RR} can be rewritten as
$$R_{X_0}\big(\vec{x}M)=R_{X_0}\big(\vec{x}M+\vec{v}\big)+R_{X_1}\big(\vec{x}\big)\ .$$
For any $j\in\{0,...,m-2\}$, denote by $X_j$ the pull-back via $\pi$ of the transverse intersection of $j$ general elements
of $|sA|$. Then by an inductive argument we obtain $rK_{X_j}+\iota_FL_{X_j}={\pi_{|X_j}}^*(jrs+1)A$ and
\begin{equation}\label{RRR}
R_{X_{j}}\big(\vec{x}M)=R_{X_{j}}\big(\vec{x}M+\vec{v}\big)+R_{X_{j+1}}\big(\vec{x}\big)
\end{equation}
for $j\in\{1,...,m-2\}$.
Equation (\ref{RRR}) says that if we know the term $R_{X_{j+1}}\big(\vec{x}\big)$,
it is possible to go back to the term $R_{X_{j}}\big(\vec{x}\big)$, and so on.

\medskip

Finally, consider the case $j=m-1$ and for simplicity set $W:=X_{m-1}$ and $R\big(\vec{x}\big):=R_{W}\big(\vec{x}\big)$.
Then $p_{(W,L_W)}(\vec{x})=R(\vec{x})\cdot \prod_{k=1}^{\iota_F-1}\big( ry-\iota_Fx+k \big)$.
Note that $W$ is a smooth variety of dimension $n-m+1$ endowed with a morphism
$\varphi:=\pi_{|X_{m-1}} :W\to C$ onto a smooth irreducible curve $C$ which is the transverse intersection of $m-1$ general elements of $|sA|$.
Then $rK_W+\iota_FL_W=\varphi^*\mathcal A$ for some ample line bundle $\mathcal A$ on $C$. Hence $rK_W+\iota_FL_W\equiv tF$ for some positive integer $t$.
Thus, for a general fiber $F$ of $\varphi$, by considering the following exact sequences
$$0\to xK_W+yL_W - F \to xK_W+yL_W\to xK_F+yL_F\to 0\ ,$$
$$0\to xK_W+yL_W - 2F \to xK_W+yL_W-F\to xK_F+yL_F\to 0\ ,$$
$$\dots$$
$$0\to xK_W+yL_W - tF = (x-r)K_W+(y-\iota_F)L_W \to xK_W+yL_W - (t-1)F\to xK_F+yL_F\to 0\ ,$$
we obtain that
\begin{equation}\label{RRRR}
p_{(W,L_W)}(x,y)=p_{(W,L_W)}(x-r,y-\iota_F)+t\ p_{(F,L_F)}(x,y)\ .
\end{equation}

\medskip

\noindent Since $F$ is a Fano manifold with $rK_F+\iota_FL_F= \mathcal{O}_F$, by Proposition \ref{prop1} we know that
$$p_{(F,L_F)}(x,y)=R_F(x,y)\cdot\prod_{i=1}^{\iota_F-1}\big(ry-\iota_Fx+i\big)\ ,$$
for a suitable polynomial $R_F$ of degree $n-m-\iota_F+1$.
Then equation \eqref{RRRR} with the above relations yields
\begin{equation}\label{R_F}
R(x,y)=R(x-r,y-\iota_F)+t\ R_F(x,y).
\end{equation}
Therefore, once we know $R_F(x,y)$, from (\ref{R_F}) we can find $R(\vec{x}):=R(x,y)$.
In conclusion, by (\ref{RRR}), going back by induction, we can obtain all the $R_{X_j}\big(\vec{x}\big)$'s.
This completes the procedure (see Algorithm \ref{Alg3}, in the Appendix).

\begin{rem}\label{remark}
{\em A Fano fibration of dimension $n$ and coindex $0$ over a normal surface is in fact a projective bundle $\mathbb{P}(\mathcal{V})$ for some ample vector bundle $\mathcal{V}$ of rank $n-1$ over a smooth surface in view of \cite[(3.2.1)]{BSW} and \cite[Lemma (2.12)]{F}.}
\end{rem}

As an example, we apply the above method to find the Hilbert curve of a scroll over a smooth surface
with the help of Maple.

\begin{ex}\label{ex scroll over S}
{\em Let $X:=\mathbb{P}(\mathcal{V})$ for some ample vector bundle $\mathcal{V}$ of rank $n-1$ over a smooth surface $S$.
Let $L$ be the tautological line bundle and let $F\cong\mathbb{P}^{n-2}$ be a fiber of the bundle projection $\pi:X\to S$.
In this case $\iota_F=n-1$. So, letting $r=1$, we have $$p_{(X,L)}(x,y)=R(x,y)\cdot\prod_{i=1}^{\iota_F-1}\big(ry-\iota_Fx+i\big)=R(x,y)\cdot\prod_{i=1}^{n-2}\big[y-(n-1)x+i\big]\ ,$$ where,
due to the invariance of $p_{(X,L)}$ under the Serre involution, the polynomial $R(x,y)$ has the following expression:
\begin{equation}\label{R(x,y)}
R(x,y)=a_{11} x^2+2a_{12} xy+a_{22} y^2-a_{11} x-a_{12} y+a_{00}\ ,
\end{equation}
with $a_{00}=\frac{\chi(\mathcal{O}_X)}{(n-2)!}$, for some $a_{11}, a_{12}, a_{22}\in\mathbb{Q}$.
Since we are assuming that $(X,L)$ is a scroll over $S$, we have $K_X+(n-1)L=\pi^*A$ for some ample line bundle $A$ on $S$.
Moreover,
\begin{equation}\label{A}
A:=K_S+\det\mathcal{V}
\end{equation}
by the canonical line bundle formula. Let $s$ be a positive integer such that $sA$ is very ample. Let $C\in |sA|$ be any smooth curve and let
$V=\pi^{-1}(C)$. In the present case \eqref{R} becomes
$$R\big((1+s)x,y+s(n-1)x\big)=R\big((1+s)x-s,(y+s(n-1)x)-s(n-1)\big)+R_{(V,L_V)}\big(x,y\big)\ .$$
Note that $(V,L_V)$ is a scroll over $C$ via $\pi_{|V}:V\to C$. Then by Theorem \ref{thm2} we know that
$$R_{(V,L_V)}\big(x,y\big)=\frac{2q-2}{(n-2)!}\ x + \frac{d}{(n-1)!}\ y-\frac{q-1}{(n-2)!}\ .$$
Here $q=g(C)=1+\frac{s}{2}(K_SA+sA^2)$ by the genus formula and
\begin{equation}\label{d}
d=L_V^{n-1}=(\det\mathcal{V})C=sA(A-K_S)
\end{equation}
by the Chern-Wu relation and
\eqref{A}. So, by using Maple and comparing $R\big((1+s)x,y+s(n-1)x\big)-R\big((1+s)x-s,(y+s(n-1)x)-s(n-1)\big)$ with $R_{(V,L_V)}\big(x,y\big)$, we obtain
{\small{
\begin{align*}
a_{11} & = \frac{n-1}{2(s^2+s)n!}\big\{\left[-2\chi(\mathcal{O}_S)(n-1)^2+2\chi(L)(n-1)\right](s^2+s)-d(n+1)s+n(2q-2)-d\big\} \qquad \\
& = \frac{n-1}{2n!}\left[-2\chi(\mathcal{O}_S)(n-1)^2+2\chi(L)(n-1)+(n+1)K_SA-A^2\right]\ , \\
a_{12} & = \frac{1}{2s(n!)}\big\{\left[2\chi(\mathcal{O}_S)(n-1)^2-2\chi(L)(n-1)\right]s+d\big\} \\
& = \frac{1}{2(n!)}\left[2\chi(\mathcal{O}_S)(n-1)^2-2\chi(L)(n-1)-K_SA+A^2\right]\ , \\
a_{22} & =\frac{1}{2s(n!)}\big\{\left[-2\chi(\mathcal{O}_S)(n-1)+2\chi(L)\right]s+d\big\}=\frac{1}{2(n!)}\left[-2\chi(\mathcal{O}_S)(n-1)+2\chi(L)-K_SA+A^2\right]\ .
\end{align*}}}

\noindent Note that to get the final expressions of the $a_{ij}$'s we used \eqref{d} and the fact that $-d(n+1)s+n(2q-2)-d=(s^2+s)[(n+1)K_SA-A^2]$.}
\end{ex}

\medskip

Actually, Fano fibrations of coindex $0$ over a smooth surface can be characterized by means of their HC.

\begin{thm}\label{thm scrolls over S}
Let $(X,L)$ be a polarized manifold of dimension $n\geq 3$. If
$(X,L)$ is a Fano fibration of coindex $0$
over a smooth surface $S$
then
\begin{eqnarray*}\label{scrolls S r=1}
p_{(X,L)}(x,y) &=& \frac{1}{n!} \Bigg\{\  \frac{n-1}{2}\left[-2\chi_0(n-1)^2+2\chi(n-1)+(n+1)k-h\right] x^2 \ + \\ \nonumber
& & \quad \quad + \ \left[2\chi_0(n-1)^2-2\chi (n-1)-k+h\right] xy \ + \\ \nonumber
& & \quad \quad + \ \frac{1}{2}\left[-2\chi_0 (n-1)+2\chi -k+h\right] y^2 \ + \\ \nonumber
& & \quad \quad + \ \frac{n-1}{2}\left[2\chi_0 (n-1)^2-2\chi (n-1)-(n+1)k+h\right] x \ + \\ \nonumber
& & \quad \quad + \ \frac{1}{2}\left[-2\chi_0 (n-1)^2+2\chi (n-1)+k-h\right] y \ + \\ \nonumber
& & \quad \quad + \ n(n-1)\chi_0 \ \Bigg\} \cdot\prod_{i=1}^{n-2}\bigg(y-(n-1)x+i\bigg)\ , \nonumber
\end{eqnarray*}
where $\chi_0:=\chi(\mathcal{O}_S), \chi:=\chi(L), k:=K_SA$ and $h:=A^2$, $A$ being an ample line bundle on $S$ such that $K_X+(n-1)L=\pi^*A$.
Conversely, assume that $\text{\rm{rk}}\langle K_X,L \rangle =2$ and $K_X+(n-1)L$ is nef. If $p_{(X,L)}(x,y)$ is as above for some integers $\chi_0, \chi, k, h$ with $h>0$, then $(X,L)$ is a Fano fibration of coindex $0$ over a smooth surface.
\end{thm}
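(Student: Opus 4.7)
The plan is to handle the two directions separately, with the forward direction being essentially a specialization of the already-done computation in Example \ref{ex scroll over S} and the converse built on the pattern set by Theorems \ref{thm2} and \ref{thm3}.

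For the ``only if'' direction, by Remark \ref{remark} a Fano fibration of coindex $0$ over a smooth surface $S$ is a $\mathbb P$-bundle $X=\mathbb P(\mathcal V)$ with $\mathcal V$ an ample vector bundle of rank $n-1$ on $S$. Twisting $\mathcal V$ by a line bundle if necessary (which does not change $X$) we may arrange that $L$ is the tautological line bundle of $\mathcal V$. The canonical bundle formula then gives $K_X+(n-1)L=\pi^*A$ with $A=K_S+\det\mathcal V$, so we are exactly in the situation of Example \ref{ex scroll over S} with $r=1$. Substituting the computed values of $a_{11},a_{12},a_{22}$ (and $a_{00}=\chi_0/(n-2)!$) with the identifications $\chi_0=\chi(\mathcal O_S)$, $\chi=\chi(L)$, $k=K_S A$, $h=A^2$ directly produces the polynomial in the statement.

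For the converse, I would first extract from the given factorization of $p_{(X,L)}$ that $p_0(1,n-1,0)=0$, so by \eqref{atinfty} one has $(K_X+(n-1)L)^n=0$. Combined with the nef assumption, Remark \ref{morphism} produces a morphism $\varphi\colon X\to Y$ onto a normal $Y$ with $\dim Y<n$ and $K_X+(n-1)L=\varphi^*D$ for some nef line bundle $D$ on $Y$. To pin down $\dim Y=2$, I would read the intersection numbers $K_X^aL^{n-a}$ off the coefficients of $p_0(x,1,0)=\frac{1}{n!}(xK_X+L)^n$: writing $n!\,p_0(x,1,0)=(\alpha x^2+\beta x+\gamma)(1-(n-1)x)^{n-2}$ with $\alpha,\beta,\gamma$ the coefficients of $x^2,xy,y^2$ in $R(x,y)$ evaluated at $y=1$, expansion of the binomial and comparison with $\binom{n}{a}K_X^aL^{n-a}$ yields (mechanically via Maple, as done in the proofs of Theorems \ref{thm3} and \ref{expr_with_r=1})
\[
(K_X+(n-1)L)^3 L^{n-3}=0,\qquad (K_X+(n-1)L)^2 L^{n-2}=h.
\]
Translating via $\varphi^*D$ and using $h>0$, these equalities force $\dim Y=2$.

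The last step identifies the fibration structure. A general fiber $F$ of $\varphi$ has dimension $n-2$ and, by adjunction, $K_F+(n-1)L_F=(\varphi^*D)|_F=\mathcal O_F$; hence $F$ is Fano with $\iota_F\geq n-1=\dim F+1$, so by the Kobayashi--Ochiai theorem $F\cong\mathbb P^{n-2}$ with $L_F=\mathcal O(1)$. The assumption $\mathrm{rk}\langle K_X,L\rangle=2$ rules out the degenerate Kobayashi--Ochiai outcomes and guarantees $\dim Y\geq 1$ already; with $\dim Y=2$ established, \cite[Proposition 3.2.1]{BS} (or equivalently the combination of \cite[Lemma (2.12)]{F} with \cite[Theorem 7.3.2]{BS}) shows that $Y$ is smooth, $D$ is ample, and $(X,L)$ is the scroll $\mathbb P(\varphi_*L)\to Y$. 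This makes $(X,L)$ a Fano fibration of coindex $0$ over the smooth surface $Y$, as required.

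I expect the main obstacle to be the numerical step: honestly verifying the two intersection-number identities $(K_X+(n-1)L)^3L^{n-3}=0$ and $(K_X+(n-1)L)^2L^{n-2}=h$ from the given coefficients $\alpha,\beta,\gamma$ is tedious because of the alternating signs in $(1-(n-1)x)^{n-2}$ and the fact that the coefficients themselves already carry factors of $n-1$. Once these equalities are in hand, the geometric conclusion is routine by Kobayashi--Ochiai plus the BS--Fujita structure theorem, and the direct direction reduces to citing Example \ref{ex scroll over S}.
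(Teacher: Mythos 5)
Your proof is correct, and its skeleton matches the paper's: the forward direction is exactly the specialization of Example \ref{ex scroll over S}, and the converse opens the same way (read $(K_X+(n-1)L)^n=0$ off $p_0(1,n-1,0)$, invoke Remark \ref{morphism}, extract $L^n$, $K_XL^{n-1}$, $K_X^2L^{n-2}$, $K_X^3L^{n-3}$ from $p_0(x,1,0)$ and deduce $(K_X+(n-1)L)^3L^{n-3}=0$, hence $\dim Y\le 2$). Where you genuinely diverge is in how the one-dimensional base is excluded. The paper never computes $(K_X+(n-1)L)^2L^{n-2}$: it notes that the nefvalue is $n-1$, invokes the adjunction-theoretic classification (\cite[Theorem 7.3.2]{BS} and \cite[(11.8)]{Fu}) to reduce to the dichotomy ``scroll over a surface'' versus ``quadric fibration over a curve'', and kills the latter by matching the given coefficients against the quadric-fibration polynomial of Theorem \ref{thm3} with $r=1$, which forces $h=0$. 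You instead verify the single identity $(K_X+(n-1)L)^2L^{n-2}=h$ and use $h>0$ to force $\dim Y=2$ outright, then recover the fiber $\bigl(\mathbb P^{n-2},\mathcal O(1)\bigr)$ by Kobayashi--Ochiai and the $\mathbb P$-bundle structure from \cite{BSW} and \cite{F} (the references the paper itself uses in Remark \ref{remark}; note that \cite[Proposition 3.2.1]{BS}, as you cite it, is really the curve-base statement, so lean on the other two). Your identity does hold, and the ``tedious'' step you flagged is in fact clean: from the stated coefficients one gets $a_{11}+2(n-1)a_{12}+(n-1)^2a_{22}=\frac{n(n-1)}{2\,n!}\,h$, whence $(K_X+(n-1)L)^2L^{n-2}=2(n-2)!\bigl[a_{11}+2(n-1)a_{12}+(n-1)^2a_{22}\bigr]=h$. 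What your route buys is independence from Theorem \ref{thm3} (the paper's exclusion presupposes the quadric-fibration polynomial has already been computed) and a self-contained identification of the fiber; what the paper's route buys is that it never needs Kobayashi--Ochiai or an explicit fiber analysis, since the classification theorem supplies the two candidate structures directly.
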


\begin{proof}
Keeping in mind Remark \ref{remark}, the ``only if'' part of the statement follows from Example \ref{ex scroll over S} once we consider that
$X=\mathbb{P}(\mathcal{E})$ for some ample vector bundle $\mathcal{E}$ of rank $n-1$ on $S$, $L$ being the tautological line bundle, and $A=K_S+\det\mathcal{E}$.
Thus assume that $(X,L)$ is a polarized manifold of dimension $n\geq 3$ with $\text{\rm{rk}}\langle K_X,L \rangle =2$ for which $K_X+(n-1)L$ is nef and $p_{(X,L)}(x,y)$ is as in the statement. Note that
$(K_X+(n-1)L)^n=n!p_{0}(1,n-1,0)=0$ by \eqref{atinfty}.
Hence $K_X+(n-1)L$ is nef but not big. Thus by Remark \ref{morphism} there exists a morphism $\varphi :X \to Y$ onto a normal variety $Y$ with $\dim Y<n$ such that
\begin{equation}\label{aF}
K_X+(n-1)L= \varphi^*D
\end{equation}
for some nef line bundle $D$ on $Y$. Write $p_{(X,L)}(x,y)=R(x,y)\cdot\prod_{i=1}^{n-2}\big(y-(n-1)x+i\big)$, where $R(x,y)$ is as in \eqref{R(x,y)} of Example \ref{ex scroll over S}. From
$$\frac{1}{n!}\left(xK_X+L\right)^n=p_{0}(x,1,0)=\left(a_{11} x^2+2a_{12} x+a_{22}\right)\cdot \left[ (n-1)x-1\right]^{n-2},$$
it follows that
$$\frac{1}{n!}\left[L^n+\binom{n}{1}K_XL^{n-1}x+\binom{n}{2}K_X^2L^{n-2}x^2+\binom{n}{3}K_X^3L^{n-3}x^3+ \dots\right]= $$
$$=a_{22} (-1)^n+\left[ a_{22}\binom{n-2}{1}(-1)^{n-1}(n-1)+2a_{12}(-1)^n\right]x \ +$$
$$+\ \left[ a_{22}\binom{n-2}{2}(-1)^n(n-1)^2+2a_{12}\binom{n-2}{1}(-1)^{n-1}(n-1)+a_{11}(-1)^n \right]x^2\ + $$
$$+\ (-1)^{n-1}\left[ a_{22}\binom{n-2}{3}(n-1)^3-2a_{12}\binom{n-2}{2}(n-1)^2+a_{11}\binom{n-2}{1}(n-1) \right] x^3 \ + \dots \ .$$
Comparing the coefficients of $x^2, xy$ and $y^2$, a computation with Maple shows that
$$(K_X+(n-1)L)^3L^{n-3}=K_X^3L^{n-3}+3(n-1)K_X^2L^{n-2}+3(n-1)^2K_XL^{n-1}+(n-1)^3L^n = 0,$$ hence $\dim Y\leq 2$.
Note that $\dim Y=1$ or $2$, because $\text{\rm{rk}}\langle K_X,L \rangle =2$.
Since the nefvalue of $(X,L)$ is $n-1$ in view of \eqref{aF}, we deduce by
\cite[Theorem 7.3.2]{BS} and \cite[(11.8)]{Fu} that $(X,L)$ is either (i) a scroll
over a smooth surface or (ii) a quadric fibration over a smooth curve of genus $q$.
Assume we are in case (ii). Then comparing the coefficients of $x^2, xy, y^2$ in $R(x,y)$ of $p_{(X,L)}$ given in the statement
with those provided by Theorem \ref{thm3} for $r=1$ (see formula \eqref{expr1}), we get
the following equalities:
\begin{align*}
n! a_{11} & =  \frac{n-1}{2}\left[ -2\chi_0(n-1)^2+2\chi (n-1)+(n+1)k-h\right] \\
& =  (1-n)\left[ 2n(2q-2)+2e+(n+1)a\right]\ , \\
n! a_{12} & =  \frac{1}{2}\left[ 2\chi_0 (n-1)^2-2\chi (n-1)-k+h\right]\\
& =  n(2q-2)-(n-2)e+a\ , \\
n! a_{22} & =  \frac{1}{2}\left[-2\chi_0 (n-1)+2\chi -k+h\right] \\
& =  2e+a\ .
\end{align*}

\noindent A check with Maple shows that the above three equations imply $h=0$, but this is impossible because $h$ is assumed to be a positive integer. Thus $(X,L)$ is as in case (i), i.\ e. $(X,L)$ is a Fano fibration of coindex $0$ over a  smooth surface.
\end{proof}

\smallskip

A result similar to Theorem \ref{thm scrolls over S} holds also for $rK_X+(n-1)L$
with $\mathrm{gcd}(r,n-1)=1$. Finally, summing-up the above results
and the proof of \cite[Proposition 5.1]{L1},
we can deduce also the following result comparable with Corollary \ref{C(n,r)} for $(n,r)=(3,2)$.

\begin{cor}
The Conjecture $C(3,2)$ stated in \cite{L1} is true provided that $(X,L)$ does not contain $(-1)$-planes.
\end{cor}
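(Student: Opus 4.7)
My plan is to reduce the corollary to Corollary \ref{C(n,r)}, which already proves Conjecture $C(n,r)$ under the extra hypothesis that $rK_X+nL$ is nef. Specialising to $(n,r)=(3,2)$, I therefore only need to show that the prescribed shape of $\Gamma$, together with $\mathrm{rk}\langle K_X,L\rangle=2$ and the absence of $(-1)$-planes, forces $2K_X+3L$ to be nef. The form of $\Gamma$ required by $C(3,2)$ makes the linear factors $(2y-3x+1)$ and $(2y-3x+2)$ divide $p_{(X,L)}(x,y)$, so the point $(2:3:0)$ lies on $\overline{\Gamma}$ and by \eqref{atinfty} one gets $(2K_X+3L)^3=3!\,p_0(2,3,0)=0$. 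Hence failure of nefness can only come from a strictly negative extremal ray.

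Assume for contradiction that $2K_X+3L$ is not nef. The Cone Theorem produces an extremal rational curve $\gamma$ with $(2K_X+3L)\cdot\gamma<0$, and the ampleness of $L$ gives
\[
\ell(R)\;=\;-K_X\cdot\gamma\;>\;\tfrac{3}{2}\,L\cdot\gamma\;\geq\;\tfrac{3}{2},
\]
so Mori's bound $\ell(R)\leq n+1=4$ leaves $\ell(R)\in\{2,3,4\}$. I would rule out each case. For $\ell(R)=4$, Wi\'sniewski's classification forces $X\cong\mathbb{P}^3$, contradicting $\mathrm{rk}\langle K_X,L\rangle=2$. For $\ell(R)=3$, the inequality gives $L\cdot\gamma=1$, and \cite[(2.4)]{W} leaves either $X\cong\mathbb{Q}^3$ (again excluded by the rank hypothesis) or $X$ a $\mathbb{P}^2$-bundle $\pi\colon X\to C$ over a smooth curve; in the latter case $L\cdot\gamma=1$ forces $L\equiv\xi+bF$, so Example \ref{ex scrolls over curve} gives a Hilbert curve whose parallel component has slope $3$, contradicting the assumed slope $3/2$. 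For $\ell(R)=2$ one again gets $L\cdot\gamma=1$, and Mori's classification of length-two extremal contractions of smooth threefolds leaves exactly two options: a divisorial contraction of a $(-1)$-plane $E\cong\mathbb{P}^2$ with $N_{E/X}=\mathcal{O}_{\mathbb{P}^2}(-1)$, which is ruled out by hypothesis, or a $\mathbb{P}^1$-bundle structure over a smooth surface $S$; in this last case $L\cdot\gamma=1$ makes $(X,L)$ a scroll over $S$, whose Hilbert curve, by Example \ref{ex scroll over S}, contains the factor $y-2x+1$ of slope $2$, once more contradicting slope $3/2$.

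The delicate step is the $\ell(R)=2$ case: it relies on Mori's fine classification of length-two extremal contractions of smooth threefolds, and on the explicit Hilbert curve of a scroll over a smooth surface worked out in Example \ref{ex scroll over S}. The overall strategy parallels that of \cite[Proposition 5.1]{L1} in the case $r=1$, the role of the hypothesis \emph{no $(-1)$-planes} being precisely to eliminate the $E_2$ contraction that survives after the slope information encoded in $\Gamma$ has disposed of every other Mori ray. Once nefness of $2K_X+3L$ is established, Corollary \ref{C(n,r)} completes the proof.
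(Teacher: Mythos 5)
The paper does not actually write out a proof of this corollary: it is stated with only the one-line remark that it follows by ``summing-up the above results and the proof of \cite[Proposition 5.1]{L1}''. Your plan --- locate the point $(2:3:0)$ on $\overline{\Gamma}$ to get $(2K_X+3L)^3=0$, and then run a Mori-theoretic case analysis on the extremal rays violating nefness of $2K_X+3L$ --- is exactly the kind of argument being alluded to, and your treatment of the cases $\ell(R)=4$, $\ell(R)=3$, and the divisorial $\ell(R)=2$ contraction (which is where the no-$(-1)$-plane hypothesis enters) is sound.

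However, there is a genuine gap in the fiber-type branch of the $\ell(R)=2$ case, and it breaks your overall strategy. You claim that when the ray gives a $\mathbb{P}^1$-bundle over a surface, so that $(X,L)$ is a scroll over $S$, the factor $y-2x+1$ of slope $2$ in $p_{(X,L)}$ ``contradicts slope $3/2$''. It does not: Conjecture $C(3,2)$ prescribes that \emph{exactly two} of the three lines are parallel with slope $3/2$, and places no restriction on the slope of the third line, which may perfectly well be $2$. Concretely, take $(X,L)=(\mathbb{P}^2\times\mathbb{P}^1,\mathcal{O}(2,1))$. Then $\mathrm{rk}\langle K_X,L\rangle=2$, $X$ contains no $(-1)$-planes, and
$$p_{(X,L)}(x,y)=\binom{2y-3x+2}{2}\,(y-2x+1)=\tfrac{1}{2}(2y-3x+1)(2y-3x+2)(y-2x+1)$$
has precisely the configuration required by $C(3,2)$; yet $2K_X+3L=\mathcal{O}(0,-1)$ is \emph{not} nef, and the length-two ray is the conic-bundle ray of the projection to $\mathbb{P}^2$, which exhibits $(X,L)$ as a scroll over $\mathbb{P}^2$ with $K_X+2L=\pi^*\mathcal{O}_{\mathbb{P}^2}(1)$. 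So your intended conclusion (``the hypotheses force $2K_X+3L$ nef'') is false, and the reduction to Corollary \ref{C(n,r)} cannot cover this case. The corollary is nonetheless true here because this $X$ happens also to be a $\mathbb{P}^2$-bundle over $\mathbb{P}^1$ with $L$ restricting to $\mathcal{O}_{\mathbb{P}^2}(2)$ on fibers; what the proof must do in this branch is show that the shape of $\Gamma$ forces the scroll over the surface to carry such a second fibration (e.g.\ by pinning down $(S,A)=(\mathbb{P}^2,\mathcal{O}(1))$ and the bundle from the coefficients of the residual conic in Example \ref{ex scroll over S}), rather than excluding the branch by a contradiction that is not there.
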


\section{Appendix}\label{Appendix}

\noindent Here is the link to the program in MAGMA \cite{magma} used to obtain Proposition \ref{prop2} with a case-by-case analysis:

\bigskip

\url{https://www.dropbox.com/s/lgkk411cwev2mbw/MagmaProgram.docx?dl=0}

\bigskip

\noindent Here are the three main algorithms cited in the paper:
\begin{algorithm}
\caption{
The Hilbert curve $\Gamma$ of a Fano manifold $X$ of index $i$ for
$L:=\frac{r}{\iota_X}(-K_X)$ with $r\in\mathbb{Z}_{\geq 1}$}\label{Alg1}
\begin{algorithmic}[1]
\Require $r, \iota_X, n$
\Ensure $\Gamma$ \If {$n>1, 0<\iota_X\leq n+1$} \State $c_X \gets n-\iota_X+1$
\Procedure{RHilbPolynF}{$\iota_X$} \If {$\iota_X=1$} $\delta (l) \gets 1$
\State \textbf{else} $\delta(l) \gets \prod_{h=1}^{\iota_X-1} (l+h)$
\EndIf \For {$j=0,...,c_X$} $b_j \gets
\frac{1}{\delta(j)} h^0\left(\frac{j}{\iota_X}(-K_X)\right)$ \EndFor \State
$U \gets \texttt{Vandermonde Matrix of } \{0,1,...,c_X\}$ \State
$(a_0,a_1,...,a_{c_X}) \gets (b_0,b_1,...,b_{c_X})\cdot U^{-1}$
\State $R_{(X,L)}(x,y) \gets \left( \sum_{k=0}^{\mu}a_k(ry-mx)^k\right)$
\State \Return $R_{(X,L)}(x,y)$ \EndProcedure \EndIf
\end{algorithmic}
\end{algorithm}  

\bigskip

\begin{algorithm}
\caption{
The Hilbert curve $\Gamma$ of a Fano fibration $\pi :X\to C$
with $\dim X=n$, fiber $F$}\label{Alg2}
\begin{algorithmic}[1]
\Require $F$, $r, \iota_F, n, t$
\Ensure $\Gamma$
\If {$n>1, 0<\iota_F\leq
n$}
\State \texttt{Find $R(x,y)$ with suitable conditions such that}
\If {$t>0$}\State \texttt{Consider
$R(x,y)=R(x-r,y-\iota_F)+t\cdot$RHilbPolynF$(\iota_F)$}
\EndIf
\If {$t<0$}\State \texttt{Consider
$R(x,y)=R(x+r,y+\iota_F)-t\cdot$RHilbPolynF$(\iota_F)$}
\EndIf
\State
$p_{(X,L)}(x,y)\gets R(x,y)\cdot \prod_{k=1}^{\iota_F-1} (ry-\iota_Fx-k)$
\State \Return $\Gamma: p_{(X,L)}(x,y)=0$
\EndIf
\end{algorithmic}
\end{algorithm} 

\bigskip

{\small {
\begin{algorithm}
\caption{
The Hilbert curve $\Gamma$ of a Fano fibration $\pi :X\to Y$ with $m=\dim Y\geq 2$, fiber $F$}\label{Alg3}
\begin{algorithmic}[1]
\Require $A$, $F$, $r, \iota_F, n , m$
\Ensure $\Gamma$
\If {$n>m, 0<\iota_F\leq n-m+1$}
\State $c_F \gets n-m-\iota_F+1$
\State $s \gets 1$
\Repeat
\State $s \gets s+1$
\Until {$sA$ is very ample}
\State $t\gets s^{m-1}(rs+1)^{\frac{m(m-1)}{2}}A^m$
\State \texttt{Find $R_{X_{m-1}}(x,y)$ such that $R_{X_{m-1}}(x,y)=R_{X_{m-1}}(x-r,y-\iota_F)+t\cdot $RHilbPolynF$(\iota_F)$}
\State $M \gets \left( \begin{array}{ c c }
sr+1 & 0 \\
s\iota_F & 1 \\
\end{array}\right)$
\State $\vec{v}\gets\left( -sr , -s\iota_F \right)$
\State $\vec{x}\gets\left( x , y \right)$
\State $j\gets m$
\Repeat
\State $j\gets j-1$
\State \texttt{��Find $R_{X_{j-1}}(x,y)$ such that $R_{X_{j-1}}\big(\vec{x}M)=R_{X_{j-1}}\big(\vec{x}M+\vec{v}\big)+R_{X_j}\big(\vec{x}\big)$��}
\Until $j=1$
\State $p_{(X,L)}(x,y)\gets R_{X_{0}}(x,y)\cdot \prod_{k=1}^{\iota_F-1} (ry-\iota_Fx+k)$
\State \Return $\Gamma: p_{(X,L)}(x,y)=0$
\EndIf
\end{algorithmic}
\end{algorithm}  }}

\vspace{1cm}

\noindent {\bf Acknowledgements}. The first author is a member of G.N.S.A.G.A. of the Italian INdAM. He would like to thank
 the PRIN 2014 Geometry of Algebraic Varieties and the University of Milano for partial support.
 During the preparation of this paper, the second author was partially supported by the National Project Anillo ACT
 1415 PIA CONICYT and the Proyecto VRID N.214.013.039-1.OIN of the University of Concepci\'on.

\newpage


\begin{thebibliography}{10}

\bibitem{BLS} M.C. Beltrametti, A. Lanteri, and A.J. Sommese, {\em Hilbert curves of polarized varieties}, J. Pure Appl.\ Algebra {\bf{214}} (2010), 461--479.

\bibitem{BS} M.C. Beltrametti and A.J. Sommese, {\em The Adjunction
Theory of Complex Projective Varieties}, Expositions in Mathematics,
vol. 16, W. de Gruyter, 1995.

\bibitem{BS2} M.C. Beltrametti and A.J. Sommese, {\em On the preservation of $k$-very ampleness under adjunction}, Math. Z. {\bf 212} (1993), 257--283.

\bibitem{BSW} M.C. Beltrametti, A.J. Sommese and J.A. Wi\'sniewski, {\em Results on varieties with many lines and their applications to adjunction theory},
in Complex Algebraic Varieties, Bayreuth 1990, Lecture Notes in Math. {\bf 1507} (1992), 16--38, Springer-Verlag, New York.

\bibitem{magma} W. Bosna, J. Cannon and C. Playoust, {\em The Magma algebra system. I. The user language}, J. Symbolic Comput. {\bf 24} (1997), no. 3-4, 235--265.

\bibitem{F} T. Fujita, {\em On polarized manifolds whose adjoint bundles are not semipositive}, in Algebraic Geometry Sendai 1985, pp. 167--178, Advanced Studies in Pure Math. {\bf10}, Kinokuniya, 1987.

\bibitem{Fu} T. Fujita,  {\em Classification Theories of Polarized Varieties}, London Mathematical Society Lecture Notes Series, vol. 155, Cambridge University Press, 1990.

\bibitem{Hi} F. Hirzebruch, {\em Topological Methods in Algebraic Geometry}, $3^{rd}$ Ed., Springer, 1966.

\bibitem{L1} A. Lanteri, {\em Characterizing scrolls via the Hilbert curve}, Internat. J. Math.
{\bf{25}}(11) (2014), [17 pages].

\bibitem{L2} A. Lanteri, {\em Hilbert curves of quadric fibrations}, Preprint 2016

\bibitem{L3} A. Lanteri, {\em Hilbert curves of $3$-dimensional scrolls over surfaces},
J. Pure Appl. Algebra {\bf 222} (2018), no. 1, 139--154.

\bibitem{Me} M. Mella, {\em Existence of good divisors on Mukai varieties}, J. Algebraic Geom. {\bf 8} (1999), no. 2, 197--206.

\bibitem{W} J.A. Wi\'sniewski, {\em Length of extremal rays and generalized adjunction}, Math. Z. {\bf 200} (1989), 409--427.

\end{thebibliography}
\end{document}